\newcounter{statement} 
\numberwithin{statement}{section}
\numberwithin{equation}{section}
\newtheorem{Def}[statement]{Definition}
\newtheorem{thm}[statement]{Theorem}
\newtheorem{lemma}[statement]{Lemma}
\newtheorem{cor}[statement]{Corollary}
\newtheorem{prop}[statement]{Proposition}
\newtheorem{conj}[statement]{Conjecture}
\theoremstyle{remark}
\newcommand{\R}{\mathbb R} 				
\newcommand{\N}{\mathbb N}
\newcommand{\intd}{\, \mathrm{d}} 				
\newcommand{\warr}{\rightharpoonup}				
\newcommand{\Sph}{{\mathbb S}}				
\newcommand{\Hd}{\mathcal H}				
\newcommand{\stcomp}[1]{{#1}^{\mathsf{c}}}		
\newcommand{\eps}{\varepsilon}
\newcommand{\interior}[1]{{\kern0pt#1}^{\mathrm{o}}	
}
\def\XXint#1#2#3{{\setbox0=\hbox{$#1{#2#3}{\int}$ }
\vcenter{\hbox{$#2#3$ }}\kern-.58\wd0}}
\title{\bf Euler's elastica functional as a large mass limit of a
  two-dimensional non-local isoperimetric problem}
\author[1]{Cyrill B. Muratov}
\author[1]{Matteo Novaga}
\author[2]{Theresa M. Simon}
\affil[1]{Department of Mathematics,
    University of Pisa, Largo B. Pontecorvo 5, 56127 Pisa, Italy.}
  \affil[2]{Institut f{\"u}r Analysis und Numerik, Universit{\"a}t
    M{\"u}nster, 48149 M{\"u}nster, Germany.}
\begin{document}
\maketitle
\begin{abstract}
  We consider a large mass limit of the non-local isoperimetric
  problem with a repulsive Yukawa potential in two space dimensions.
  In this limit, the non-local term concentrates on the boundary,
  resulting in the existence of a critical regime in which the
  perimeter and the non-local terms cancel each other out to leading
  order.  We show that under appropriate scaling assumptions the
  next-order $\Gamma$-limit of the energy with respect to the $L^1$
  convergence of the rescaled sets is given by a weighted sum of the
  perimeter and Euler's elastica functional, where the latter is
  understood via the lower-semicontinuous relaxation and is evaluated
  on the system of boundary curves. As a consequence, we prove that in
  the considered regime the energy minimizers always exist and
  converge to either disks or annuli, depending on the relative
  strength of the elastica term.
\end{abstract}

{\noindent {\it MSC 2020:} 49Q10, 49Q20, 49S05}

\tableofcontents

\section{Introduction}

In many physical systems the onset of spatial pattern formation is
driven by a competition of short-range attractive and long-range
repulsive foces \cite{seul95,m:pre02,giuliani09a}. In binary systems,
this is often captured by a prototypical model in which the
short-range attractive interactions between the two phases are modeled
by an interfacial energy term, while the long-range repulsion is due
to a two-body interaction through a positive kernel:
\begin{align}\label{eq:nonlocal_iso_problem}
  E(\Omega) =  P(\Omega) + \frac12 \int_{\Omega}\int_{\Omega}
  K(x-y) \intd^n y \intd^n x.
\end{align}
Here $\Omega \subset \R^n$, with $n\geq2$, is the spatial domain
occupied by the minority phase whose ``mass'' $|\Omega| = m > 0$ is
fixed, $P(\Omega)$ is the perimeter \cite{ambrosio} of $\Omega$,
and $K: \mathbb R^n \to [0,\infty)$ is a suitable kernel. A typical
example is given by the Coulombic kernel $K(x) = {1 \over 4 \pi |x|}$
in three space dimensions, giving rise to the celebrated Gamow's model
of the atomic nucleus \cite{Gamow1930mass,cmt:nams17}.  However,
many other kernels may be considered, notably the regularized dipolar
kernel $K(x) \sim {1 \over |x|^3}$ in two dimensions that arises in
the context of magnetic domains, ferrofluids and Langmuir monolayers
\cite{seul95,andelman09, andelman85,mcconnell88,mcconnel91}. Yet
another variant is obtained by considering a Yukawa potential
$K = K_\alpha$ in the plane:
\begin{align}
  \label{Kalpha}
  K_\alpha(x) = {e^{-\alpha |x|} \over 2 \pi |x|} \qquad x \in \mathbb R^2,
\end{align}
where $\alpha > 0$ is a screening parameter, which naturally arises in
the studies of Langmuir monolayers in the presence of weak ionic
solutions (see Appendix \ref{sec:model-derivation}).

The behavior of the minimizers of the non-local isoperimetric problem
governed by \eqref{eq:nonlocal_iso_problem} depends rather crucially
on the rate of decay of the kernel $K(x)$ as $|x| \to \infty$. In
particular, there is a notable difference for large masses: In the
case of the three-dimensional Coulombic kernel and two- or
three-dimensional domains $\Omega$, minimizers fail to exist beyond a
certain critical mass
\cite{knupfer2013isoperimetric,muratov2014isoperimetric,
  lu2014nonexistence,frank2016nonexistence}, while for a {\em
  screened} three-dimensional Coulombic kernel represented by the
Yukawa potential the minimizers do exist for all $\alpha > \alpha_c$
for some explicit $\alpha_c = \alpha_c(n) > 0$, provided that
$m \geq m_c$ for some $m_c = m_c(\alpha, n) > 0$, see Pegon
\cite{pegon2021large}. Moreover, in two dimensions the minimizers for
sufficiently large values of $\alpha$ and all large enough masses are
known to be disks \cite{merlet2022large}, something that in the
absence of screening ($\alpha = 0$) is known to occur at small masses
$m \ll 1$ instead
\cite{knupfer2013isoperimetric,muratov2014isoperimetric,
  julin2014isoperimetric,figalli2015isoperimetry}. Thus, one can
imagine that for a given $m \gg 1$ a transition occurs at some
threshold value of $\alpha > 0$ that may lead to the onset of
minimizers which are no longer necessarily disks as the value of
$\alpha$ is lowered.

Our work attempts to look into the transition that bridges the gap
between the two regimes described above in two space dimensions. We
focus on the parameters for which the non-local term at large masses
cancels the interfacial energy term of the energy
\eqref{eq:nonlocal_iso_problem} to the leading order. It turns out
that to next order in the asymptotic expansion of the energy as
$m \to \infty$, this yields Euler's elastica functional plus a
  term proportional to the perimeter:
\begin{align}
  \label{Eelastica}
  E_0(\Omega) =  \int_{\partial \Omega}\left(
  \sigma + \frac{\pi}{2} \kappa^2\right) \intd \Hd^1.
\end{align}
Here, $\kappa$ is the curvature of $\partial \Omega$ and
$\sigma > 0$. More precisely, we will show that a relaxed version
of the energy in \eqref{Eelastica} can be obtained as the
$\Gamma$-limit of a suitably rescaled energy in
\eqref{eq:nonlocal_iso_problem} with the kernel from \eqref{Kalpha},
as the value of $\alpha$ approaches the critical value
$\alpha_c = {1 \over \sqrt{2 \pi}}$ with the right rate (see the
following section for the precise statement). As a consequence, we can
conclude that the minimizers of the energy in
\eqref{eq:nonlocal_iso_problem} in the considered limit change from
disks to annuli as the parameter of the asymptotic expansion is
varied. Note that annular domains are frequently observed in the
experiments on lipid monolayers \cite{mcconnell90}.

Similar regimes may be studied when instead of screened Coulombic
repulsion one considers regularized and renormalized dipolar repulsion
for $n=2$.  In this setting, Muratov and Simon proved that in regimes
where perimeter asymptotically still carries a cost, minimizers are
disks even for finite regularization lengths
\cite{muratov2018nonlocal}.  They also identified the next-order limit
in the case of vanishing cost of the perimeter, which by a result of
Cesaroni and Novaga \cite{cesaroni2022second} coincides with the
second-order expansion of the fractional perimeters close to the local
one.  Muratov and Simon also proved existence of non-spherical
minimizers for a modified, yet still isotropic kernel
\cite{muratov2018nonlocal}.  Closely related results were obtained for
a class of general kernels in the regime of large mass by Pegon
\cite{pegon2021large}, Merlet and Pegon \cite{merlet2022large}, and
Goldman, Merlet and Pegon \cite{goldman2022uniform}, as well as by
Kn{\"u}pfer and Shi \cite{knuepfer2022second} in the case of a torus.

We note that Euler's elastica energy is a classical problem in the
calculus of variations, which was first analyzed by Euler in 1744 for
$\sigma=0$, after Daniel Bernoulli proposed the energy to him in a
letter \cite{euler1774methodus}.  While the original motivation was to
study thin elastic rods, it has since also appeared in image
segmentation problems, see for example Mumford
\cite{mumford1994elastica}.  Its higher-dimensional analog, the
Willmore energy, which asks to minimize the $L^2$-norm of the mean
curvature of a hypersurface and, more generally, the Helfrich energy,
appear in a variety of fields from differential geometry to the
modeling of cell membranes in biology, see for example Willmore
\cite{willmore1992survey} and Helfrich \cite{helfrich1973elastic}.  We
will require the elastica energy in its relaxed form (with respect to
the $L^1$ topology of the enclosed sets).  It has been characterized
by Bellettini and Mugnai
\cite{belletini2004characterization,bellettini2007varifolds}, see also
Bellettini, Dal Maso, and Paolini \cite{bellettini1993semicontinuity}.
Its minimizers have been identified by Goldman, Novaga, and R{\"o}ger
\cite{goldman2020quantitative}, even after augmentation by a non-local
term as in \eqref{eq:nonlocal_iso_problem}.  To the best of our
knowledge, they were also the first to include curvature-depending
terms in the context of non-local isoperimetric problems.

Finally, we remark on results regarding the passage from first-order
variational problems to second-order problems.  The most prominent
body of literature certainly pertains to the rigorous derivation of
bending energies from non-linear elasticity with its many
contributions being thoroughly outside the scope of this introduction.
We thus only mention the seminal paper by Friesecke, James, and
M{\"u}ller \cite{friesecke2002theorem}, which serves as the foundation
for virtually all contributions following it.  Indeed, it is also
where our argument takes part of its inspiration. On the other
  hand, the question of this type was posed by De Giorgi in the
  context of phase field models of phase transitions
  \cite{degiorgi91}. While the original conjecture from
  \cite{degiorgi91} was shown not to lead to an energy of the form of
  \eqref{Eelastica} \cite{bellettini23} (compare with \cite{roger06}),
  a natural alternative would be provided by the diffuse interface
  version of the energy in \eqref{eq:nonlocal_iso_problem} in two
  space dimensions:
  \begin{align}
    \label{eq:diffuse}
    E(u) = \int_{\R^2} \left( \frac12 |\nabla u|^2 + \frac{9}{32} (1 -
    u^2)^2 \right) \intd^2 x + \frac12 \int_{\R^2} \int_{\R^2}
    K_\alpha(x - y) u(x) u(y) \intd^2 x  \intd^2 y,
  \end{align}
  with the kernel $K_\alpha$ from \eqref{Kalpha} and the mass
  constraint
  \begin{align}
    \int_{\R^2} u \intd^2 x = m.
  \end{align}
  Here the choice of the double-well potential ensures that the
  surface energy associated with the optimal transition layer
  connecting $u = 0$ and $u = 1$ is equal to unity, hence, yielding
  the perimeter functional as the $\Gamma$-limit of the first term in
  \eqref{eq:diffuse} in the limit $m \to \infty$ after rescaling
  lengths by $m^{1/2}$ \cite{modica87}. We thus would expect that the
  limit behavior of the energy in \eqref{eq:diffuse} would be the same
  as that of \eqref{eq:nonlocal_iso_problem}, yielding an example of a
  second-order variational problem arising from phase field models of
  phase transitions.

  This paper is organized as follows. In Sec.\ \ref{sec:main}, we give
  the precise formulation of the problem under consideration and its
  limit, and present the precise statement of the obtained results,
  followed by an outline of the proof. In Sec.\ \ref{sec:prelim}, we
  prove existence of minimizers in the considered regime and derive
  the representation of the non-local energy term used throughout the
  rest of the paper. Then, in Sec.\ \ref{sec:compact} we establish
  compactness of boundary curves in the considered limit and in Sec.\
  \ref{sec:Gamma} we prove $\Gamma$-convergence.  We also provide the
  details of model derivation in the appendix.

\paragraph{Acknowledgments.} The first two authors are members of
INdAM-GNAMPA, and acknowledge partial support by the MUR
Excellence Department Project awarded to the Department of
Mathematics, University of Pisa, CUP I57G22000700001, by the PRIN 2022
Project P2022E9CF89 and by the PRIN 2022 PNRR Project P2022WJW9H.  The
last author is funded by the Deutsche Forschungsgemeinschaft (DFG,
German Research Foundation) under Germany's Excellence Strategy EXC
2044 –390685587, Mathematics Münster: Dynamics–Geometry–Structure.

\section{Main results}
\label{sec:main}

For the screening parameter $\alpha>0$ and mass $m>0$,
we study the non-local isoperimetric problem whose kernel is
given by the Yukawa potential \eqref{Kalpha}. Up to a mass-dependent
additive constant, the energy \eqref{eq:nonlocal_iso_problem} is then
given by
\begin{align}\label{energy}
  E_{\alpha}(\Omega):= P(\Omega) -
  \frac{1}{4\pi}\int_\Omega\int_{\stcomp{\Omega}}
  \frac{e^{-\alpha|x-y|}}{|x-y|} \intd^2 y \intd^2 x ,
\end{align}
on the admissible class
\begin{align}\label{admissible_class}
  \mathcal{A}_m := \{\Omega \subset \R^2: \ |\Omega| = m, \
  P(\Omega) <\infty\}. 
\end{align}
A direct calculation shows that for $\lambda >0$ we have
	\begin{align*}
          E_{\alpha}(\lambda\Omega) = \lambda \left( P(\Omega) -
          \frac{\lambda^2}{4\pi}\int_\Omega\int_{\stcomp{\Omega}}
          \frac{e^{-\lambda \alpha |x-y|}}{|x-y|} \intd^2 y \intd^2 x
          \right). 
	\end{align*}
We may therefore instead analyze the energy
\begin{align}\label{energy_large_mass}
  F_{\lambda , \alpha}(\Omega):= P(\Omega) - \frac{\lambda
  ^2}{4\pi}\int_\Omega\int_{\stcomp{\Omega}} \frac{e^{-\lambda
  \alpha |x-y|}}{|x-y|} \intd^2 y \intd^2 x 
\end{align}
on the admissible class $\mathcal{A}_\pi$, where $m = \lambda^2\pi$
gives the relation between $\lambda$ and $m$. In particular, studying
the $\lambda \to \infty$ limit of the energy in
\eqref{energy_large_mass} over the admissible class $\mathcal A_\pi$
is equivalent to studying the limit of $m \to \infty$ of the energy in
\eqref{energy} over the admissible class $\mathcal A_m$.

The existence of a subcritical regime of screening parameters for this
energy is already established by the general results of P{\'e}gon and
collaborators
\cite{pegon2021large,merlet2022large,goldman2022uniform}. Indeed,
their result gives the following:
\begin{thm}[Merlet, P{\'e}gon \cite{merlet2022large}]
  For $\alpha > \frac{1}{\sqrt{2\pi}}$, the $L^1$ $\Gamma$-limit of
  $F_{\lambda_n, \alpha}$ as $\lambda_n \to \infty$ is given
  by
  \begin{align}\label{eq:gamma_limit_subcritical}
    G_\alpha(\Omega) = \left(1-\frac{1}{2\pi\alpha^2} \right) P(\Omega).
  \end{align}
  Furthermore, there exists $\lambda_\alpha>0$ such that for all
  $\lambda>\lambda_\alpha$ all minimizers of $F_{\lambda,\alpha}$ are,
  up to translation, given by the disk $B_1(0)$.
  \label{thm:pegon}
\end{thm}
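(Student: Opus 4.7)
The plan is to reduce both assertions to a single sharp pointwise inequality between the non-local energy term and a multiple of the perimeter, from which the $\Gamma$-convergence follows directly and the minimizer characterization emerges through a stability argument. Setting $\beta := \lambda\alpha$ and changing variables $z = y - x$, one rewrites
\begin{equation*}
\int_\Omega \int_{\stcomp{\Omega}} \frac{e^{-\beta|x-y|}}{|x-y|}\, \intd^2 y \, \intd^2 x = \frac12 \int_{\R^2} \frac{e^{-\beta|z|}}{|z|}\, |\Omega \Delta (\Omega + z)| \, \intd^2 z,
\end{equation*}
expressing the non-local term as a weighted superposition of translation asymmetries on the vanishing length scale $\beta^{-1} = (\lambda\alpha)^{-1}$.

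For the $\liminf$ inequality I would use the sliced BV bound
\begin{equation*}
|\Omega \Delta (\Omega + z)| \leq \int_{\partial^* \Omega} |z\cdot \nu_\Omega| \, \intd\Hd^1,
\end{equation*}
obtained by applying the one-dimensional estimate $|A \Delta (A+s)| \leq |s|P(A)$ on lines parallel to $z$ and reassembling by the coarea formula; the directional factor $|z \cdot \nu|$ in place of the coarser $|z|$ is precisely what extracts the sharp constant. Combined with the polar-coordinates moment $\int_{\R^2} |z\cdot e|\, e^{-\beta|z|}/|z| \, \intd^2 z = 4/\beta^2$, this yields the uniform estimate
\begin{equation*}
\frac{\lambda^2}{4\pi}\int_\Omega\int_{\stcomp{\Omega}}\frac{e^{-\lambda\alpha|x-y|}}{|x-y|}\, \intd^2 y\, \intd^2 x \leq \frac{1}{2\pi\alpha^2}P(\Omega)
\end{equation*}
for every $\Omega \in \mathcal{A}_\pi$ and every $\lambda > 0$, so that $F_{\lambda,\alpha} \geq G_\alpha$ pointwise. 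The $\liminf$ inequality then follows from $L^1$-lower semicontinuity of the perimeter, the subcritical assumption $\alpha > 1/\sqrt{2\pi}$ entering exclusively to ensure positivity of $1 - 1/(2\pi\alpha^2)$, and hence coercivity of $G_\alpha$.

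The $\limsup$ is delivered by the constant recovery sequence $\Omega_n \equiv \Omega$. For smooth $\Omega$, the tubular-neighborhood expansion $|\Omega \Delta (\Omega + z)| = \int_{\partial\Omega}|z\cdot \nu| \, \intd\Hd^1 + O(|z|^2)$ valid for small $|z|$, combined with the rescaling $z = w/\beta$ and dominated convergence (exponential decay handles the large-$|z|$ tail), yields the matching pointwise limit $P(\Omega)/(2\pi\alpha^2)$; extension to general $\Omega \in \mathcal{A}_\pi$ follows by approximation with smooth sets of the same mass and converging perimeters and a diagonal argument. For the minimizer statement, the pointwise inequality together with the explicit asymptotics $F_{\lambda,\alpha}(B_1) \to G_\alpha(B_1)$ bounds $P(\Omega_\lambda)$ uniformly along any sequence of minimizers; after translation, subsequential $L^1$-limits exist and, by the fundamental theorem of $\Gamma$-convergence, minimize $G_\alpha$ on $\mathcal{A}_\pi$, hence are unit disks by the classical isoperimetric inequality.

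The main obstacle is promoting this $L^1$-convergence to the actual equality $\Omega_\lambda = B_1$ up to translation for every $\lambda > \lambda_\alpha$. I would argue by contradiction: if $(\Omega_{\lambda_n})_n$ is a sequence of non-disk minimizers with $\lambda_n \to \infty$, the compactness above forces $\Omega_{\lambda_n} \to B_1$ in $L^1$ after translation, at which point the Fuglede-type quantitative isoperimetric inequality provides a lower bound on $P(\Omega_{\lambda_n}) - P(B_1)$ by the squared $H^{1/2}$-norm of the boundary perturbation. This must be pitted against a sharp expansion of the non-local defect $F_{\lambda_n,\alpha}(\Omega_{\lambda_n}) - F_{\lambda_n,\alpha}(B_1)$ around the disk, showing that the non-local correction is strictly dominated by the isoperimetric deficit once $\lambda$ is sufficiently large. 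Producing this expansion with uniform control of the remainder in the relevant Sobolev norm is the technically most delicate step of the argument.
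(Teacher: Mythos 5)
This theorem is a cited result of Merlet and P\'egon \cite{merlet2022large}; the paper does not reprove it. The only internal indication of a proof is the remark following Lemma \ref{lem:energy_on_boundary} that the $\Gamma$-convergence portion ``immediately'' follows from the pointwise inequality \eqref{eq:FlaP} (the $\liminf$) together with Proposition \ref{prop:expansion} on a constant recovery sequence (the $\limsup$). Your route to the $\Gamma$-limit is correct and is a genuine alternative: where the paper obtains $F_{\lambda,\alpha}(\Omega)\geq \left(1-\tfrac{1}{2\pi\alpha^2}\right)P(\Omega)$ by integrating the kernel by parts and identifying the exact remainder as a tangent-half-plane comparison, you instead rewrite the non-local term as $\tfrac12\int_{\R^2}\tfrac{e^{-\lambda\alpha|z|}}{|z|}\,|\Omega\Delta(\Omega+z)|\,\intd^2 z$, bound the symmetric difference by $\int_{\partial^*\Omega}|z\cdot\nu|\,\intd\Hd^1$ via one-dimensional slicing, and evaluate the polar moment $\int_{\R^2}|z\cdot e|\,e^{-\beta|z|}/|z|\,\intd^2 z=4/\beta^2$. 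The two derivations yield the identical pointwise inequality, but yours is shorter and more elementary while giving up the exact error representation that the paper exploits for the next-order elastica expansion. Your $\limsup$ argument (constant recovery sequence on regular sets, then density of regular sets with fixed mass and converging perimeters together with lower semicontinuity of the $\Gamma$-$\limsup$) is standard and complete.

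The minimizer classification is the part that remains genuinely incomplete, and you say so yourself. The soft step---translations plus $L^1$-compactness plus $\Gamma$-convergence plus the isoperimetric inequality forcing subsequential limits to be unit disks---is fine. But passing from $L^1$-closeness to the exact equality $\Omega_\lambda=B_1$ for all $\lambda>\lambda_\alpha$ is precisely the content of the Merlet--P\'egon theorem, and your sketch flags but does not close the key gap: a sharp expansion of the non-local defect around the ball, with remainder controlled uniformly in $\lambda$ in a norm comparable to the $H^{1/2}$-deficit supplied by the Fuglede-type quantitative isoperimetric inequality. The paper under review makes no attempt at this step either, deferring to \cite{merlet2022large}; so your proposal matches the $\Gamma$-convergence half (by a different but valid method) and, like the paper, only outlines the rigidity half.
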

\noindent By an $L^1$ $\Gamma$-limit, we mean the limit with respect
to convergence of the characteristic functions
$\chi_{\Omega_n} \to \chi_\Omega$ in $L^1(\R^2)$ of measurable sets
$\Omega_n \subset \R^2$ to that of the limiting set
$\Omega \subset \R^2$ as $n \to \infty$. Similar results have been
obtained by Muratov and Simon for a non-local isoperimetric
problem with dipolar repulsion \cite{muratov2018nonlocal}.

In this paper, we will instead investigate the large mass behaviour
near the \emph{critical} screening length
$\alpha = \frac{1}{\sqrt{2\pi}}$ via a $\Gamma$-convergence analysis.
First, we note that in this regime minimizers always exist and are
sufficiently regular.

\begin{prop}\label{lem:minimizers}
  Let $\lambda > 0$ and $\alpha > \frac{1}{\sqrt{2\pi}}$. Then a
  minimizer of $F_{\lambda,\alpha}$ over $\mathcal{A}_\pi$ exists.
  Furthermore, all minimizers are bounded, connected, open sets with
  boundary of class $C^{2,\alpha}$ for any $\alpha \in (0,1)$ and have
  finitely many holes.
\end{prop}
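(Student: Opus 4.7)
The plan is to establish existence via the direct method, exploiting the integrability of the Yukawa kernel and the subcriticality $\alpha > 1/\sqrt{2\pi}$ to rule out mass loss at infinity, and then to obtain regularity and the topological conclusions from standard quasi-minimality and comparison arguments.

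For existence, I would first note that $\int_{\R^2} e^{-\lambda\alpha|z|}/|z| \, \intd^2 z = 2\pi/(\lambda\alpha)$, so the nonlocal term in \eqref{energy_large_mass} is bounded above by $\pi\lambda/(2\alpha)$ on $\mathcal{A}_\pi$. Combined with $P(\Omega) \geq 2\pi$ from the isoperimetric inequality this gives $\inf F_{\lambda,\alpha} > -\infty$, and any minimizing sequence $(\Omega_n)$ has uniformly bounded perimeter, so after translations we extract a subsequence with $\chi_{\Omega_n} \to \chi_\Omega$ in $L^1_{\mathrm{loc}}(\R^2)$ and almost everywhere, with $|\Omega| \leq \pi$. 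Rewriting the nonlocal contribution as $\pi\lambda/(2\alpha) - \langle \chi_\Omega, K * \chi_\Omega\rangle$ with $K(z) := (\lambda^2/4\pi)\,e^{-\lambda\alpha|z|}/|z| \in L^1(\R^2)$ shows that it is continuous under $L^1_{\mathrm{loc}}$ convergence up to mass tails. To exclude splitting to infinity I would use strict subadditivity of $f(m) := \inf\{F_{\lambda,\alpha}(\Omega) : |\Omega|=m\}$: if the sequence split into far-apart pieces of masses $m_1, m_2 > 0$ with $m_1 + m_2 = \pi$, the limit energy would equal $f(m_1) + f(m_2)$, but a disk comparison combined with the leading-order strict positivity of the perimeter coefficient $1 - 1/(2\pi\alpha^2) > 0$ from Theorem~\ref{thm:pegon} yields $f(m_1) + f(m_2) > f(\pi)$, a contradiction. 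Hence $|\Omega| = \pi$, and lower semicontinuity of the perimeter together with continuity of the nonlocal term imply that $\Omega$ attains the infimum.

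For regularity, any minimizer is a volume-constrained almost minimizer of the perimeter in the sense of Tamanini, since the first variation of the nonlocal term under any local perturbation is controlled by the Lebesgue measure of the symmetric difference. The classical theory in the planar setting then gives $\partial\Omega \in C^{1,1/2}$ with empty singular set. The Euler--Lagrange equation expresses the boundary curvature as a Lagrange multiplier plus the trace on $\partial\Omega$ of the Yukawa potential $v(x) = (\lambda^2/(2\pi))\int_{\stcomp\Omega} e^{-\lambda\alpha|x-y|}/|x-y| \, \intd^2 y$, which is H\"older continuous of every exponent strictly less than $1$; Schauder bootstrapping then upgrades $\partial\Omega$ to $C^{2,\beta}$ for every $\beta \in (0,1)$.

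The topological statements follow from the same translation-and-compare logic used to preclude splitting: any unbounded component or sufficiently isolated cluster in a minimizer could be modified to strictly decrease the energy, so minimizers are bounded and connected. Finiteness of the number of holes follows from the fact that each hole carries at least $2\pi r$ of perimeter while the nonlocal cost of filling a hole of radius $r$ is $o(1)$ as $r \to 0$; combined with finite total perimeter this prevents accumulation of holes. I expect the main technical obstacle to be making the subadditivity estimate $f(m_1) + f(m_2) > f(\pi)$ sufficiently uniform in the masses to cover the full range $\lambda > 0$, rather than only the large-$\lambda$ regime of Theorem~\ref{thm:pegon}; this likely requires working directly with the quadratic form $\langle \chi_\Omega, K * \chi_\Omega\rangle$ rather than with its asymptotic expansion.
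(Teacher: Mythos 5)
Your overall architecture (direct method, concentration--compactness, quasi-minimality for regularity, potential regularity plus Euler--Lagrange for $C^{2,\alpha}$) matches the paper, and the regularity and topological parts are essentially the same as what the paper does, citing Tamanini-type quasi-minimizer theory for $C^{1,\beta}$, H\"older regularity of the Yukawa potential, and a Schauder bootstrap. However, there is a genuine gap in the crucial step: ruling out splitting. You correctly identify that strict subadditivity $f(m_1)+f(m_2)>f(\pi)$ is the needed ingredient, but the argument you sketch rests on the \emph{asymptotic} perimeter coefficient $1-\tfrac{1}{2\pi\alpha^2}$ from the large-$\lambda$ $\Gamma$-limit of Theorem~\ref{thm:pegon}, which is simply not available at fixed finite $\lambda>0$ --- the proposition must hold for every $\lambda>0$. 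You acknowledge this yourself as ``the main technical obstacle'' and suggest working with the quadratic form directly, but you do not supply the argument, so the proof as written does not close.

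The paper resolves this with a clean scaling computation that is valid for all $\lambda>0$: using the boundary representation of Lemma~\ref{lem:energy_on_boundary} (integration by parts against a potential $\Phi_\alpha$ solving $\Delta\Phi_\alpha=e^{-\alpha|\cdot|}/|\cdot|$), one writes
\begin{align*}
  F_{\lambda,\alpha}(\Omega) = \Bigl(1-\tfrac{1}{2\pi\alpha^2}\Bigr)P(\Omega)
  + \frac{1}{4\pi\alpha}\int_{\partial^*\Omega}\int_{H^0_-(\nu(y))\,\Delta\,\lambda(\Omega-y)}
  \Bigl|\nu(y)\cdot\tfrac{z}{|z|}\Bigr|\,\frac{e^{-\alpha|z|}}{|z|}\,\intd^2 z\,\intd\Hd^1(y),
\end{align*}
where the coefficient $1-\tfrac{1}{2\pi\alpha^2}>0$ is \emph{exact}, not asymptotic. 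Under the dilation $\Omega\mapsto\beta^{1/2}\Omega$ with $\beta\in(0,1)$, the perimeter term scales by $\beta^{1/2}$ while the nonlocal remainder scales by $\beta$ with a \emph{weaker} screening rate $\beta^{1/2}\alpha$, hence a larger kernel; dropping the gain in the kernel and splitting $\beta^{1/2}=\beta+\beta^{1/2}(1-\beta^{1/2})$ gives
$F_{\lambda,\alpha}(\beta^{1/2}\Omega)\geq\beta F_{\lambda,\alpha}(\Omega)+\beta^{1/2}(1-\beta^{1/2})(1-\tfrac{1}{2\pi\alpha^2})P(\Omega)$,
and hence $\inf_{\mathcal A_{\beta\pi}}F_{\lambda,\alpha}>\beta\inf_{\mathcal A_\pi}F_{\lambda,\alpha}$, which yields strict subadditivity uniformly in $\lambda$. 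This exact identity is the missing ingredient in your sketch; without it (or some substitute), the dichotomy cannot be resolved for small or moderate $\lambda$.
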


As the next step, we observe that for fixed and sufficiently regular
sets the energy has an expansion in terms of the perimeter and the
squared $L^2$-norm of the curvature of the boundary, i.e., the
elastica energy. Throughout the rest of the paper, we call a set
  {\em regular}, if it is a bounded open set with the boundary of
  class $C^\infty$. As the energy $F_{\lambda,\alpha}(\Omega)$ of any
  admissible set $\Omega \in \mathcal A_\pi$ may be approximated by
  that of a regular set, restricting our attention to regular sets
  will suffice for our purposes.

\begin{prop}\label{prop:expansion}
  Let $\Omega$ be a regular set.  Then as $\lambda \to \infty$ we
  have
  \begin{align*}
    F_{\lambda,\alpha}(\Omega) = \left( 1- \frac{1}{2\pi
    \alpha^2}\right) P(\Omega) + \frac{1}{8\pi 
    \alpha^4 \lambda^2}  \int_{\partial \Omega} \kappa^2 \intd
    \Hd^1 + o\left(\lambda^{-2} \right).
  \end{align*}
\end{prop}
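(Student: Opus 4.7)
The plan is to represent the non-local term as a superposition of Gaussian heat contents and to conclude from the classical short-time asymptotics of the latter. My starting point is the Gaussian subordinator identity
\begin{equation*}
  \frac{e^{-\beta r}}{r} = \frac{1}{\sqrt\pi}\int_0^\infty \frac{e^{-\beta^2/(4s)}}{\sqrt s}\, e^{-s r^2}\intd s, \qquad r>0,\ \beta>0,
\end{equation*}
applied with $\beta = \lambda\alpha$. After Fubini, the non-local part of $F_{\lambda,\alpha}(\Omega)$ becomes
\begin{equation*}
  \frac{\lambda^2}{4\pi}\int_\Omega\int_{\stcomp{\Omega}}\frac{e^{-\lambda\alpha|x-y|}}{|x-y|}\intd^2 y\intd^2 x
  = \frac{\lambda^2}{4\sqrt\pi}\int_0^\infty \frac{e^{-(\lambda\alpha)^2/(4s)}}{s^{3/2}}\,H(s)\intd s,
\end{equation*}
where $H(s):=\int_\Omega\int_{\stcomp{\Omega}}(s/\pi)\,e^{-s|x-y|^2}\intd^2 y\intd^2 x$ is the Gaussian heat content of $\Omega$.

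For the regular set $\Omega$ I would then invoke (or establish via a tubular-neighborhood computation, likely placed among the preliminary results in Section~\ref{sec:prelim}) the classical two-dimensional short-diffusion-time expansion
\begin{equation*}
  H(s) = \frac{P(\Omega)}{2\sqrt{\pi s}} - \frac{1}{32\sqrt\pi\, s^{3/2}}\int_{\partial\Omega}\kappa^2\intd\Hd^1 + o(s^{-3/2})\qquad \text{as } s\to\infty.
\end{equation*}
The absence of an $s^{-1}$ term (which would correspond to an $\int_{\partial\Omega}\kappa\intd\Hd^1$ contribution) is a standard parity feature of the heat content asymptotics for smooth boundaries, and it is precisely what makes the would-be $\lambda^{-1}$ contribution to $F_{\lambda,\alpha}(\Omega)$ vanish.

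Substituting this expansion into the $s$-integral and using the elementary identity
\begin{equation*}
  \int_0^\infty s^{-n}e^{-\beta^2/(4s)}\intd s = \left(4/\beta^2\right)^{n-1}\Gamma(n-1),\qquad n>1,
\end{equation*}
(giving $4/\beta^2$ and $16/\beta^4$ for $n=2,3$), the two retained heat-content terms produce the contributions $\frac{1}{2\pi\alpha^2}P(\Omega)$ and $-\frac{1}{8\pi\alpha^4\lambda^2}\int_{\partial\Omega}\kappa^2\intd\Hd^1$ to the non-local part, which then combine with the $P(\Omega)$ of the perimeter to yield the claimed expansion.

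The only delicate point is controlling the remainder. Since the Gaussian weight $e^{-(\lambda\alpha)^2/(4s)}$ is exponentially suppressed unless $s\gtrsim(\lambda\alpha)^2\to\infty$, the heat content expansion is always evaluated in the regime where it applies; any uniform $o(s^{-3/2})$ remainder integrates against the weight to give an $o(\lambda^{-4})$ contribution, comfortably within the required $o(\lambda^{-2})$ accuracy, while the portion of the $s$-integral on any bounded interval is exponentially small in $\lambda^2$. The genuine obstacle is thus the derivation of the heat-content expansion itself with the correct $\int\kappa^2$ coefficient: this requires a careful boundary-layer computation using the curvature-dependent Jacobian $1-\kappa t$ of normal coordinates together with a second-order Taylor expansion of $\partial\Omega$, an argument which is essentially geometric and independent of the specific Yukawa structure.
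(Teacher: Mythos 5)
Your approach is correct and takes a genuinely different route from the paper's. I checked the subordination identity, the reduction to the Gaussian heat content $H(s)$, and the critical coefficient: a tubular-coordinate computation with parametrizations $x=\gamma(\sigma)\mp\tau\nu(\sigma)$, Jacobians $1\mp\kappa\tau$, and a Taylor expansion of $|x-x'|^2$ to order $\kappa^2$ indeed gives $H(s)=\tfrac{P(\Omega)}{2\sqrt{\pi s}}-\tfrac{1}{32\sqrt{\pi}\,s^{3/2}}\int_{\partial\Omega}\kappa^2\intd\Hd^1+o(s^{-3/2})$, the would-be $s^{-1}$ term dropping out by the $\tau\leftrightarrow\tau'$ antisymmetry of all linear-in-$\kappa$ contributions, while $\int_{\partial\Omega}\kappa\kappa'=\int_{\partial\Omega}\kappa''=0$ removes the other candidates at the same order; your Gamma integrals and the remainder splitting (exponential suppression on bounded $s$, the $o(s^{-3/2})$ tail producing $o(\lambda^{-4})$) are sound. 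The paper instead starts from the exact boundary representation of Lemma~\ref{lem:energy_on_boundary}, which already exhibits $(1-\tfrac{1}{2\pi\alpha^2})P(\Omega)$ as the leading part and encodes the error as an integral over $\partial\Omega$ of the symmetric difference between $\Omega$ and its tangent half-plane; Lemma~\ref{lem:expansion} then performs the anisotropic blowup $A_\lambda R_{\nu(y)}$ to turn that error into a parabola of vertex curvature $\kappa(y)$, and the matching lower bound comes from applying Proposition~\ref{prop:lower_bound} to a constant sequence. The boundary-layer Taylor expansion is comparably involved in both routes; what differs is what each buys. Your subordination cleanly decouples the Bessel-type kernel from the geometry and collapses the claim to a single universal heat-content asymptotic, whereas the paper's representation is tailored so that it can be reused essentially verbatim in the compactness argument and in the lower bound of the $\Gamma$-limit for genuinely varying sequences, which is the bulk of the paper's work. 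One caveat: the second-order free (as opposed to Dirichlet) heat-content term with the $\int\kappa^2$ coefficient is not quite as citable as ``classical'' as you suggest, so a self-contained treatment would still have to write out the boundary-layer expansion---precisely the computation the paper carries out inside Lemma~\ref{lem:expansion}.
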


We can thus indeed hope to obtain the combination of the perimeter
  and the elastica energy as a large-mass $\Gamma$-limit of the
functionals \eqref{energy_large_mass} in the critical regime
$\alpha = \frac{1}{\sqrt{2\pi}}$.  However, note that the integral of
the curvature squared is ill-behaved on its own, since
\begin{align}\label{eq:annuli_estimate}
  \int_{\partial \left(B_{\sqrt{1+r^2}}(0)\setminus B_{r}(0)\right) }
  \kappa^2 \intd \Hd^1 \sim \frac{1}{r} \to 0 
\end{align}
as $r\to \infty$.  Therefore, we will need to retain control over the
perimeter in order to obtain a reasonable $\Gamma$-limit.  To this
end, we will consider sequences of screening parameters which approach
the critical parameter from above as $\lambda \to \infty$ with an
appropriate rate.

\begin{thm}\label{thm:main}
  Let $\lambda_n \to \infty$ and $\alpha_n >\frac{1}{\sqrt{2\pi}}$ be
  sequences such that
  $\sigma_n := \lambda_n^2 \left( 1- \frac{1}{2\pi \alpha_n^2}
  \right)$ satisfies
	\begin{align}
		\lim_{n\to \infty} \sigma_n = \sigma >0.
	\end{align}
	Then, the $L^1$ $\Gamma$-limit of
        $\lambda_n^2 F_{\lambda_n, \alpha_n}$ as
        $n \to \infty$
        is given by
        \begin{align}
       	 	F_{\infty,\sigma} := \operatorname{rel} \tilde F_{\infty,\sigma},
        \end{align}
        where for a regular set $\Omega \in \mathcal{A}_\pi$ we define
         \begin{align}
           \label{elastica_smooth}
          \tilde F_{\infty,\sigma}(\Omega) := \sigma
          P(\Omega)+ \frac{\pi}{2} \int_{\partial \Omega}
          \kappa^2 \intd \Hd^1,
         \end{align}
         and the relaxation is with respect to the $L^1$-convergence
         of the characteristic functions.
\end{thm}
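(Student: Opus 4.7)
The plan is to establish the two standard $\Gamma$-convergence inequalities separately, relying on Proposition~\ref{prop:expansion} and on the boundary-curve compactness result to be proved in Section~\ref{sec:compact}.

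For the $\limsup$ inequality I would begin with regular sets: if $\Omega\in\mathcal{A}_\pi$ is regular, Proposition~\ref{prop:expansion} reads
\[
\lambda_n^2 F_{\lambda_n,\alpha_n}(\Omega)=\sigma_n P(\Omega)+\frac{1}{8\pi\alpha_n^4}\int_{\partial\Omega}\kappa^2\intd\Hd^1 + o(1),
\]
and since $\alpha_n\to 1/\sqrt{2\pi}$ forces $1/(8\pi\alpha_n^4)\to\pi/2$ while $\sigma_n\to\sigma$, the constant sequence $\Omega_n\equiv\Omega$ already realises the value $\tilde F_{\infty,\sigma}(\Omega)$ in the limit. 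For a general $\Omega$ with $F_{\infty,\sigma}(\Omega)<\infty$, the definition of $\operatorname{rel}$ delivers regular approximants $\Omega^{(k)}\to\Omega$ in $L^1$ with $\tilde F_{\infty,\sigma}(\Omega^{(k)})\to F_{\infty,\sigma}(\Omega)$, and a standard diagonal extraction then produces a recovery sequence for $\Omega$.

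For the $\liminf$ inequality, I would take $\Omega_n\to\Omega$ in $L^1$ with $\liminf\lambda_n^2 F_{\lambda_n,\alpha_n}(\Omega_n)<\infty$, pass to a subsequence realising the $\liminf$, and invoke the compactness theorem of Section~\ref{sec:compact}: the boundaries $\partial\Omega_n$ converge in a parametric sense to a system of $W^{2,2}$-regular curves which covers $\partial\Omega$ with integer multiplicities, placing $\Omega$ in the domain of the Bellettini--Mugnai relaxation and providing weak $L^2$ convergence of the associated curvatures. To recover the sharp constants I would localize: cover a tubular neighbourhood of the limit curves by finitely many small balls, represent $\partial\Omega_n$ in each ball as a small graph over the tangent line of the relevant limit curve, and expand the local contribution to $\lambda_n^2 F_{\lambda_n,\alpha_n}(\Omega_n)$ in the spirit of Proposition~\ref{prop:expansion}, using the exponential decay of $K_{\alpha_n}$ to control cross-interactions between distant arcs. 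Summing these local bounds and using the weak $L^2$ lower semicontinuity of the squared-curvature term then gives $\liminf\lambda_n^2 F_{\lambda_n,\alpha_n}(\Omega_n)\geq F_{\infty,\sigma}(\Omega)$.

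The main obstacle is the rigorous localization. Proposition~\ref{prop:expansion} is stated only for fixed smooth sets, so I need a quantitative local counterpart valid on mesoscopic balls: on each such ball, the excess $\lambda_n^2 F_{\lambda_n,\alpha_n}(\Omega_n)-\sigma_n P(\Omega_n)$ must control the squared curvature from below with constant $\pi/2$ up to an error that sums to $o(1)$, in the same spirit as the Friesecke--James--M\"uller geometric rigidity estimate referred to in the introduction. Extra care is needed where $\partial\Omega$ is covered with multiplicity greater than one: two sheets of $\partial\Omega_n$ then collapse onto a single limit curve at distance $o(\alpha_n^{-1})$, so their mutual non-local interaction does not vanish and has to be disentangled in order to attribute the correct curvature contribution to each sheet without overcounting.
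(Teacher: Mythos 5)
For the upper bound you are right that Proposition~\ref{prop:expansion} together with the definition of the relaxation and a diagonal argument is exactly what is needed; this matches the paper's proof. For the lower bound, however, your plan deviates from the paper's mechanism at the crucial step and leaves the hard part unresolved.

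The paper never covers the limit set by balls and never attempts a spatial localization of the non-local energy. Instead, the entire argument is organized around the boundary representation of Lemma~\ref{lem:energy_on_boundary}, which rewrites the non-local term as
\[
\frac{1}{4\pi\alpha}\int_{\partial^*\Omega}\int_{H^0_-(\nu(y))\,\Delta\,\lambda(\Omega-y)}\Bigl|\nu(y)\cdot\tfrac{z}{|z|}\Bigr|\,\frac{e^{-\alpha|z|}}{|z|}\intd^2 z\intd\Hd^1(y),
\]
so that the energy in excess of $(1-1/(2\pi\alpha^2))P(\Omega)$ is already localized as a non-negative integral over boundary points. The anisotropic blowup $A_\lambda R_{\nu(y)}(\Omega-y)$ in the revised form \eqref{eq:representation_revised} then makes the parabola appear in the limit. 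The lower bound (Proposition~\ref{prop:lower_bound}) is proved fiber-wise in this representation: the sets $S^1,\dots,S^4$ carve out, for each boundary point, the parameter values where the curve is a genuine graph over its tangent line and where the segment from the tangent line to the graph lies entirely inside the symmetric difference; the measure of the exceptional set is controlled by the $Z_{K,\lambda}$ estimate of Lemma~\ref{lemma:discrete} and Borel--Cantelli. Without first establishing such a boundary-intrinsic representation, a ball covering runs into exactly the difficulty you flag and gives no handle on it: the perimeter and the leading-order non-local contribution must cancel ball by ball, and there is no obvious way to make the non-local term additive over a spatial partition.

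Your description of the obstacle is also slightly off. For the lower bound, extra interactions between two nearby sheets in the symmetric difference $H^0_-(\nu(y))\,\Delta\,\lambda(\Omega-y)$ only add a non-negative contribution, so ``overcounting'' is not the danger and there is nothing to disentangle; one simply discards those regions (this is precisely what restricting to $S_{K,n,i,t}$ does in the paper). The genuine difficulty is the opposite: near collapsing sheets the local graph description of $\partial\Omega_n$ over the tangent line breaks down, so one must show the bad set has vanishing measure, which is what $S^4$ and the bound $\mathcal H^2(Z_{K,\lambda})\le C''P(\Omega)F_{\lambda,\alpha}(\Omega)$ accomplish. Finally, you would still need the weak identification of the microscopic difference quotients with the curvature (Lemma~\ref{lemma:weak_curvature}) in order to pass to the limit in the $z_2$-integral; this step is also missing from your outline.

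Summing up: the skeleton (compactness $\to$ limit curves $\to$ Bellettini--Mugnai relaxation theorem $\to$ lower semicontinuity) is the same as the paper's, and the upper bound is correct, but your proposal for the lower bound substitutes an unresolved ball-covering idea for the boundary representation that the paper's argument actually hinges on, and mischaracterizes the source of the technical difficulty.
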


Since finite energy sequences might break up into multiple pieces
which drift infinitely far apart, we have not included a compactness
statement here.  As minimizers must be connected due to the non-local
kernel being repulsive, we do get convergence of minimizers up to
translations.  The characterization of the minimizers used here is due
to Goldman, Novaga, and R{\"o}ger \cite{goldman2020quantitative}, to
which we also refer for more precise descriptions of the minimizers.

\begin{cor}\label{cor:convergence_minimizers}
  Under the assumptions of Theorem \ref{thm:main}, minimizers
  $\Omega_n \subset \mathcal{A_\pi}$ of $F_{\lambda_n, \alpha_n}$
  exist and after suitable translations and along a subsequence
  converge in the $L^1$-topology to a minimizer $\Omega_\infty$ of
  $F_{\infty, \sigma}$.  In particular, there exists $\bar \sigma >0$
  such that for $\sigma >\bar \sigma$ we have
  $\Omega_\infty = B_1(0)$, while for $\sigma <\bar \sigma$ there
  exists $r_\sigma>0$ such that
  $\Omega_\infty = B_{\sqrt{1+r_\sigma^2}}(0)\setminus B_{r_\sigma}(x)
  $ with $x \in \R^2$ such that
  $|x| \leq \sqrt{1+r_\sigma^2} - r_\sigma$.  For
  $\sigma= \bar\sigma$, both cases may occur, with $r_{\bar\sigma}>0$.
\end{cor}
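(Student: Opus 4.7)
The plan is to chain together three ingredients: existence and regularity of minimizers at fixed $n$ via Proposition \ref{lem:minimizers}, the $\Gamma$-convergence of Theorem \ref{thm:main}, and the classification of minimizers of the limit functional due to Goldman, Novaga, and R\"oger \cite{goldman2020quantitative}. First I would invoke Proposition \ref{lem:minimizers} to obtain, for each $n$, a minimizer $\Omega_n \in \mathcal{A}_\pi$ of $F_{\lambda_n,\alpha_n}$ that is connected and has a $C^{2,\alpha}$ boundary with only finitely many holes. To control the rescaled energy from above, I would test against the unit disk using Proposition \ref{prop:expansion}: since $\sigma_n \to \sigma$ and $\alpha_n^4 \to 1/(4\pi^2)$, one computes
\[
 \lambda_n^2 F_{\lambda_n,\alpha_n}(B_1(0)) = 2\pi \sigma_n + \tfrac{1}{8\pi \alpha_n^4} \int_{\partial B_1(0)} \kappa^2 \intd \Hd^1 + o(1) \to 2\pi \sigma + \pi^2,
\]
so by minimality $\lambda_n^2 F_{\lambda_n,\alpha_n}(\Omega_n)$ is uniformly bounded.

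The next step is a compactness argument tailored to minimizers. Theorem \ref{thm:main} deliberately omits compactness because finite-energy sequences may split into pieces drifting to infinity, so I must exploit the additional structure coming from Proposition \ref{lem:minimizers}. Combining the uniform energy bound above with the analysis of Section \ref{sec:compact} yields a uniform perimeter bound $P(\Omega_n) \le C$; since each $\Omega_n$ is connected in $\R^2$ with $|\Omega_n| = \pi$, this controls the diameter of $\Omega_n$. After a suitable translation, all $\Omega_n$ are contained in a common ball, and standard $BV$ compactness provides a subsequence with $\chi_{\Omega_n} \to \chi_{\Omega_\infty}$ in $L^1(\R^2)$ and $|\Omega_\infty| = \pi$.

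Now I would combine the two halves of $\Gamma$-convergence with minimality. The $\liminf$ inequality of Theorem \ref{thm:main} gives $F_{\infty,\sigma}(\Omega_\infty) \le \liminf_n \lambda_n^2 F_{\lambda_n,\alpha_n}(\Omega_n)$. For any competitor $\Omega \in \mathcal{A}_\pi$, the recovery sequence $\Omega_n'$ furnished by Theorem \ref{thm:main} satisfies $\lambda_n^2 F_{\lambda_n,\alpha_n}(\Omega_n') \to F_{\infty,\sigma}(\Omega)$, and minimality of $\Omega_n$ yields $\lambda_n^2 F_{\lambda_n,\alpha_n}(\Omega_n) \le \lambda_n^2 F_{\lambda_n,\alpha_n}(\Omega_n')$. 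Chaining these inequalities gives $F_{\infty,\sigma}(\Omega_\infty) \le F_{\infty,\sigma}(\Omega)$, so $\Omega_\infty$ minimizes $F_{\infty,\sigma}$ over $\mathcal{A}_\pi$.

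The final step is a direct application of the classification of minimizers of the relaxed elastica-plus-perimeter functional under the mass constraint $|\Omega| = \pi$ proved in \cite{goldman2020quantitative}, which produces the threshold $\bar\sigma > 0$ and the disk/annulus dichotomy stated in the corollary, including the description of the admissible translates of the inner disk and the borderline behaviour at $\sigma = \bar\sigma$. The principal obstacle in this plan is the compactness step: without the connectedness guaranteed by Proposition \ref{lem:minimizers}, the $L^1$ limit of a splitting minimizing sequence need not lie in $\mathcal{A}_\pi$, and the identification of $\Omega_\infty$ as a minimizer of $F_{\infty,\sigma}$ could fail; connectedness plus the perimeter bound is precisely what converts the $\Gamma$-convergence statement into convergence of minimizers.
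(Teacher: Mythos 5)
Your argument is essentially the paper's proof: existence and connectedness come from Proposition \ref{lem:minimizers}, the perimeter bound together with connectedness gives $L^1$-compactness after translation, the $\liminf$ inequality combined with recovery sequences and minimality identifies $\Omega_\infty$ as a minimizer of $F_{\infty,\sigma}$, and the disk/annulus classification is imported from \cite{goldman2020quantitative}. Your explicit derivation of the uniform energy bound by testing against $B_1(0)$ via Proposition \ref{prop:expansion} (valid here since $\alpha_n$ stays bounded away from zero, so the error terms in Lemma \ref{lem:expansion} are uniform) is just a spelled-out version of what the paper compresses into the line $\liminf_n \inf_{\mathcal{A}_\pi} \lambda_n^2 F_{\lambda_n,\alpha_n} \leq \inf_{\mathcal{A}_\pi} F_{\infty,\sigma}$.
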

\noindent The values of $\bar \sigma$ and $r_{\bar \sigma}$ may be
  found explicitly as solutions of an algebraic system of
  equations. Numerically, we have $\bar \sigma \approx 0.112736$ and
  $r_{\bar \sigma} \approx 3.66882$.

The inner ball of $\Omega_{\infty}$ in the case
$\sigma \leq \bar \sigma$ in Corollary
  \ref{cor:convergence_minimizers} need not be concentric with the
outer ball due to locality of $F_{\infty, \sigma}$.  We conjecture
that this will not actually occur in the limits of $\Omega_n$ as
$n\to \infty$.  Indeed, the following is expected to hold:

\begin{conj}
  Under the assumptions of Theorem \ref{thm:main}, there exists
  $\bar n>0$ such that for $n>\bar n$ minimizers $\Omega_n$ of
  $F_{\lambda_n,\sigma_n}$ are given, up to translation, by $B_1(0)$
  if $\sigma > \bar \sigma$ and by
  $B_{\sqrt{1+r_{\sigma,n}^2}}(0)\setminus B_{r_{\sigma,n}}(0) $ for
  some $r_{\sigma,n} >0$ converging to $r_{\sigma}$ as $n \to \infty$
  if $\sigma < \bar \sigma$.
\end{conj}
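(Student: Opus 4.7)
The plan is to combine the second-order $\Gamma$-convergence of Theorem \ref{thm:main} and the subsequential $L^1$-convergence of minimizers in Corollary \ref{cor:convergence_minimizers} with (i) an improvement of this convergence to $C^{2,\alpha}$-convergence of the boundary components, and (ii) a rigidity argument that upgrades $C^{2,\alpha}$-closeness to exact equality for large $n$.

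For (i), starting from the uniform bound on $\lambda_n^2 F_{\lambda_n,\alpha_n}(\Omega_n)$ and the expansion in Proposition \ref{prop:expansion}, I would extract uniform bounds on $P(\Omega_n)$ and $\int_{\partial\Omega_n}\kappa^2\intd\Hd^1$. Combined with the $C^{2,\alpha}$-regularity from Proposition \ref{lem:minimizers} and elliptic bootstrap in the Euler--Lagrange equation $\kappa_n + v_n = \mu_n$, where $v_n$ is the Yukawa potential generated by $\chi_{\Omega_n}-\chi_{\stcomp{\Omega_n}}$, this should upgrade the $L^1$-convergence to $C^{2,\alpha}$-convergence of each boundary component. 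In particular, the topology of $\Omega_n$ stabilizes to match that of $\Omega_\infty$: simply connected if $\sigma>\bar\sigma$ and with exactly one hole if $\sigma<\bar\sigma$.

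For (ii) in the supercritical case $\sigma>\bar\sigma$, where $\Omega_\infty = B_1(0)$, I would parametrize $\Omega_n$ as a normal graph $\xi_n\colon S^1\to\R$ over $\partial B_1$ and Taylor-expand $\lambda_n^2 F_{\lambda_n,\alpha_n}$ around the disk. The rescaled second variation converges to that of $F_{\infty,\sigma}$ at $B_1(0)$, whose Fourier modes $k\geq 2$ are strictly positive, with $k=1$ being the translation mode; uniform coercivity modulo translations then forces $\xi_n\equiv 0$ and hence $\Omega_n = B_1(0)$ after translation. In the subcritical case $\sigma<\bar\sigma$, a two-part argument is needed: first, show that each component of $\partial\Omega_n$ is exactly a circle (so $\Omega_n = B_{R_n}(0)\setminus B_{r_n}(x_n)$ with $R_n^2 - r_n^2 = 1$), and then pin down $x_n = 0$ by computing the derivative of the rescaled non-local term along the one-parameter family of annuli indexed by $x$. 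The exponential repulsiveness of $K_{\alpha_n}$ suggests this derivative is non-zero away from $x = 0$, which would fix $x_n = 0$, and then $r_n\to r_\sigma$ by $L^1$-continuity.

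The principal obstacle lies in the subcritical case. The limit $F_{\infty,\sigma}$ has a one-parameter family of minimizers (translations of the inner disk inside the outer one), so this degeneracy must be broken entirely at the subleading level by the non-local interaction between boundary components. Since $K_{\alpha_n}$ decays exponentially and the two boundaries are at physical distance $\mathcal{O}(\lambda_n)$, this interaction is of order at most $e^{-c\lambda_n}$ times a polynomial prefactor when measured against $\lambda_n^2 F_{\lambda_n,\alpha_n}$; a sharp quantitative analysis, likely via a Lyapunov--Schmidt reduction to the finite-dimensional family of annuli, is needed to control its sign uniformly in $n$. Moreover, the rigidity step that forces each boundary component to be exactly a circle (rather than a nearby smooth critical point of $F_{\lambda_n,\alpha_n}$) is itself delicate in the presence of the non-local term; it seems to require either a symmetrization argument exploiting the radial symmetry of $K_{\alpha_n}$ or a strong stability estimate for critical points near the circle, both of which go beyond the tools developed for the $\Gamma$-convergence result itself.
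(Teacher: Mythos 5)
The statement you are asked to prove is stated in the paper as a \emph{conjecture}, not a theorem: the authors do not prove it, and in fact immediately after the statement they explain why they believe a proof would require substantially more work. They write that one would need to ``go to higher orders in the expansion of the energy and, in particular, to keep track of the exponentially small terms arising from the non-local interactions between the inner and the outer boundaries of the minimizers, as well as understanding the asymptotic rigidity of concentric annuli with respect to the energy $F_{\lambda,\alpha}$,'' and they declare this ``well beyond the scope of the present paper.'' The only rigorous support they offer is Proposition \ref{prop:centered}, which pins down $x=0$ as the unique minimizer \emph{within} the finite-dimensional family of annuli $\Omega_x = B_{\sqrt{1+r^2}}(0)\setminus B_r(x)$, but says nothing about rigidity of this family against general perturbations.

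Your proposal is not a proof but a (self-described) roadmap, and it identifies essentially the same set of obstacles that the authors point to. In particular, you correctly recognize that: the first step would be upgrading $L^1$-convergence of minimizers to strong $C^{2,\alpha}$-convergence of boundary curves via the Euler--Lagrange equation from Proposition \ref{lem:minimizers}; in the supercritical case $\sigma>\bar\sigma$ a positive-definiteness argument on the second variation around the disk would be the natural route; and in the subcritical case $\sigma<\bar\sigma$ one must (a) establish rigidity of each boundary component to a circle, for which the second-order $\Gamma$-limit is genuinely degenerate along translations of the inner disk, and (b) break this degeneracy via exponentially small contributions from the kernel $K_{\alpha_n}$, likely through a Lyapunov--Schmidt reduction. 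You flag both as delicate and beyond the existing toolkit. This assessment matches the authors' own. So while your sketch is a plausible and well-calibrated strategy, it does not constitute a proof, and there is no proof in the paper with which to compare it; the correct reading is that the statement remains open.
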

\noindent This conjecture is supported by the fact that the
  minimum of $F_{\lambda,\alpha}$ among all sets of the form
  $\Omega_x := B_{\sqrt{1 + r^2}} (0) \backslash B_r(x) \in \mathcal
  A_\pi$ with $r > 0$ and
  $x \in \overline {B_{\sqrt{1 + r^2} - r}(0)}$ is
  uniquely attained for $x = 0$ for any $\lambda > 0$ and
  $\alpha > 0$, see Proposition \ref{prop:centered} below.
  Proving this conjecture, however, would require to go to higher
  orders in the expansion of the energy and, in particular, to keep
  track of the exponentially small terms arising from the non-local
  interactions between the inner and the outer boundaries of the
  minimizers, as well as understanding the asymptotic rigidity of
  concentric annuli with respect to the energy
  $F_{\lambda,\alpha}$. Such an analysis goes well beyond the scope of
  the present paper.

  \begin{prop}\label{prop:centered}
    Let $K : (0,\infty) \to (0,\infty)$ be monotone decreasing such
    that $r \mapsto rK(r)$ is integrable.  For $r>0$ and
    $x \in \overline {B_{\sqrt{1 + r^2} - r}(0)}$, let
    $\Omega_x := B_{\sqrt{1 + r^2}} (0) \backslash B_r(x)$.  Then
    $\Omega_0$ minimizes
	\begin{align}
	  \begin{split}
            f(x) & := \int_{\Omega_x} \int_{\Omega_x}  K( |y-z|) \intd^2 y \intd^2 z\\
            & = - \int_{\Omega_x} \int_{\stcomp\Omega_x} K( |y-z|)
            \intd^2 y \intd^2 z + 2\pi^2 \int_0^\infty rK(r) \intd r.
	  \end{split}
	\end{align}
	among all points $x \in \overline {B_{\sqrt{1 + r^2} -
            r}(0)}$.  Additionally, if $K$ is strictly monotone
        decreasing, then $\Omega_0$ is the unique minimizer.
  \end{prop}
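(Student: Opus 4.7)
The plan is to reduce the problem to the Riesz rearrangement inequality via an inclusion--exclusion decomposition of the double integral. Setting $R := \sqrt{1+r^2}$, the constraint $|x| \le R - r$ ensures $B_r(x) \subset B_R(0)$, so $\chi_{\Omega_x} = \chi_{B_R(0)} - \chi_{B_r(x)}$. Expanding the product $\chi_{\Omega_x}(y)\chi_{\Omega_x}(z)$ under the double integral and using the symmetry $K(|y-z|)=K(|z-y|)$ yields
\begin{equation*}
\int_{\Omega_x}\int_{\Omega_x} K(|y-z|) \intd^2 y\intd^2 z = I_R + I_r - 2\, J(x),
\end{equation*}
where $I_R$ and $I_r$ denote the self-interactions of balls of radii $R$ and $r$, and
\begin{equation*}
J(x) := \int_{B_R(0)}\int_{B_r(x)} K(|y-z|)\intd^2 y\intd^2 z.
\end{equation*}
By translation invariance of Lebesgue measure, $I_R$ and $I_r$ do not depend on $x$, so minimizing $f$ reduces to maximizing $J$. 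The second identity in the statement follows from Fubini, since $|\Omega_x|=\pi(R^2-r^2)=\pi$ and $\int_{\R^2} K(|y|)\intd^2 y = 2\pi \int_0^\infty s K(s)\intd s$ give $\int_{\Omega_x}\int_{\R^2} K(|y-z|) \intd^2 z \intd^2 y = 2\pi^2\int_0^\infty s K(s)\intd s$, which also equals $f(x) + \int_{\Omega_x}\int_{\stcomp\Omega_x} K(|y-z|)\intd^2 z \intd^2 y$.

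Next, I would apply the Riesz rearrangement inequality to the three functions $\chi_{B_R(0)}$, $\chi_{B_r(x)}$, $K(|\cdot|)$. The first and third already agree with their symmetric decreasing rearrangements, while the rearrangement of $\chi_{B_r(x)}$ is $\chi_{B_r(0)}$. Riesz therefore yields $J(x) \le J(0)$, hence $f(x) \ge f(0)$. Equivalently, a change of variables identifies $J(x)$ with the threefold convolution $\bigl(K(|\cdot|) \ast \chi_{B_R(0)} \ast \chi_{B_r(0)}\bigr)(x)$, which as a convolution of radial, radially decreasing integrable functions is itself radial and radially decreasing in $|x|$.

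For the strict case, if $K$ is strictly decreasing then $K(|\cdot|)$ is strictly radially decreasing on $\R^2$, and Burchard's characterization of the equality cases in Riesz rearrangement applies: equality forces each of the three factors to be a translate of its symmetric decreasing rearrangement by a common vector, which for $\chi_{B_R(0)}$ and $K(|\cdot|)$ forces this vector to vanish, so $\chi_{B_r(x)}$ must be centered at the origin too, i.e., $x=0$. This is the only delicate point; as an alternative to invoking Burchard, one may check strict radial monotonicity of the convolution representation of $J$ directly, using that $K \ast \chi_{B_R(0)}$ is strictly radially decreasing on $\R^2$ and that a further convolution with $\chi_{B_r(0)}$ preserves strict monotonicity on the admissible range $|x| \le R - r$.
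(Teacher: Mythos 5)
Your proof is correct and follows essentially the same route as the paper: the same inclusion--exclusion decomposition $\chi_{\Omega_x} = \chi_{B_R(0)} - \chi_{B_r(x)}$, the same observation that the two self-interaction integrals are independent of $x$ by translation invariance, and the same application of the Riesz rearrangement inequality to the cross term, with the sharp (equality-case) form of Riesz handling uniqueness. The paper cites Lieb's Lemma 3 for the sharp version where you invoke Burchard, but this is the same idea.
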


  We finally comment on the issue of the relaxation contained in
  $F_{\infty,\sigma}$, which is surprisingly subtle and technically
  challenging. This is due to the $L^1$-topology controlling the set
  while the elastica energy controls the boundary. Passing from one to
  the other is particularly tricky when parts of the boundaries of
  sequences of sets collapse, in the limit resulting in singular
  points of the boundary. The singular set may have accumulation
  points, see Figure \ref{fig:collapse}, and may even have positive
  $\mathcal{H}^1$ measure, see \cite[Example
  1]{bellettini1993semicontinuity}.

	\begin{figure}[t]
		\centering
		\includegraphics{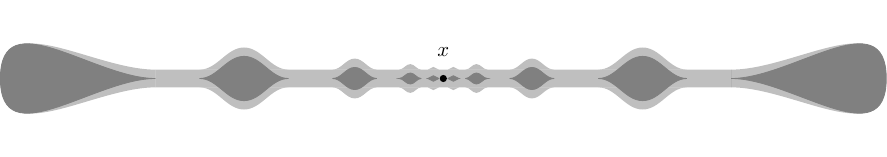}
		\caption{\label{fig:collapse} Sketch of a sequence of
                  regular sets (light gray) with finite elastica
                  energy and its limit (dark gray). The singular part
                  of the boundary of the limit has an accumulation
                  point at $x$.}
	\end{figure}

        For curves in two dimensions, the relaxation has been
        identified by Bellettini and Mugnai
        \cite{belletini2004characterization} building on ideas by
        Bellettini, Dal Maso, and Paolini
        \cite{bellettini1993semicontinuity}, which we present below
        after introducing the necessary notation.  For the also
        physically relevant case of surfaces in three dimensions, the
        relaxation of the Willmore (or Helfrich) energy is not yet
        known. Therefore, a three-dimensional analysis of the energy
        \eqref{energy_large_mass} currently seems to be out of reach.

        Before we describe how to pass from sets to boundaries in a
        suitable manner, we start with collecting standard notions for
        single curves.

\begin{Def}
  We will consider regular closed curves
  $\gamma : \Sph^1 \to \R^2$ to be parametrized by $t\in [0,1]$.  The
  corresponding Sobolev space is
\begin{align}
  H^2(\mathbb S^1; \mathbb R^2) :=  \left\{\gamma \in H_\mathrm{loc}^2(\R;
  \mathbb R^2) : \gamma(t+1)= \gamma(t) \quad \forall t \in \R
  \right\},
\end{align}
and throughout the paper we will only refer to continuous
representatives.  A curve
  $\gamma \in H^2(\mathbb S^1; \mathbb R^2)$ is called regular if
  $\gamma'(t) \not= 0$ for any $t \in [0,1]$. The length of such a
  curve is
\begin{align}
  L(\gamma) := \int_0^1 | \gamma' | \intd t.
\end{align}
We abbreviate the image of the curve as $\Gamma:= \gamma([0,1])$.  For
$x\in \R^2\setminus \Gamma$, we define the winding number of $\gamma$
around $x$ as
	\begin{align}\label{eqref:winding_number}
          \mathcal{I}(\gamma,x) := \frac{1}{2\pi} \int_0^1 
          \frac{ (\gamma(t)-x)^\perp \cdot \gamma'(t) }{|\gamma(t) -x|^2}
          \intd t,
	\end{align}
        where $y^\perp := (-y_2, y_1)$ for every
          $y = (y_1, y_2) \in \R^2$.  We will say that a regular
        curve is parametrized by constant speed if for all
        $t\in [0,1]$ we have
\begin{align}
	|\gamma'(t)| = L(\gamma).
\end{align}

\end{Def}
\noindent One can readily check that $\mathcal{I}(\gamma,0) =1$
for $\gamma(t) = (\cos(2\pi t), \sin(2\pi t))$, which is a
  constant speed parametrization for $t\in [0,1]$.

  It is instructive to consider the passage from a bounded set
  $\Omega \subset \R^2$ with smooth boundary to its boundary curves in
  detail.  Of course, the boundary $\partial \Omega$ of such a set can
  always be decomposed into the union of the images of a finite
  collection of smooth, disjoint Jordan curves
  $\{\gamma_i\}_{i=1}^N \subset C^\infty(\Sph^1; \R^2)$ for some
  $N \in \N$, i.e., smooth, closed curves $\gamma_i$ parametrized by
  $t \in [0,1]$ without self-intersections. Such a decomposition
    in the case of regular sets is classical and can be found, e.g.,
    in the appendix of Milnor's book on differential topology
    \cite{milnor}. See also Ambrosio {\em et al.} for the
    corresponding and much deeper result on sets of finite perimeter
    in the plane \cite{ambrosio01}.  Throughout the paper, we always
  order such curves by decreasing length.  With this notation, we have
  $\partial \Omega = \bigcup_{i=1}^N \Gamma_i$, where
  $\Gamma_i = \gamma_i([0,1])$. Furthermore, we always orient the
  curves such that at every point of the boundary, the \emph{outward}
  normal $\nu_i$ is given by
  \begin{align}\label{eq:chosen_normal}
  	\nu_i := - \tau_i^\perp,
  \end{align}
  where
  \begin{align}
  	\tau_i := \frac{\gamma_i'}{|\gamma_i'|}
  \end{align}
  is the unit tangent along the curve.
  This way, the curvature $\kappa$ of $\partial \Omega$ (positive if
  $\Omega$ is convex) and the curvature $\kappa_i : \Sph^1 \to \R$ of
  the boundary curve $\gamma_i$ coincide in the sense that
  $\kappa(\gamma_i(t)) = \kappa_i(t)$ for all $t \in [0,1]$.  For
  constant speed curves we have the identities
\begin{align}
  \nu_i' & =  \kappa_i L(\gamma_i)
           \tau_i, \label{eq:derivative_normal}\\ 
  \gamma_i'' & = - \kappa_i L^2(\gamma_i)
               \nu_i. \label{eq:second_derivative} 
\end{align}
For $\sigma>0$, the limit energy of $\Omega$ in
\eqref{elastica_smooth} can then be written in terms of the family of
constant speed boundary curves $\{\gamma_i\}_{i=1}^N$ as
\begin{align}
  \tilde F_{\infty,\sigma}(\Omega) = \sum_{i=1}^N \left(
  \sigma L(\gamma_i) + \frac{\pi}{2} L(\gamma_i) \int_0^1
  \kappa_i^2(t) \intd t \right).  
\end{align}

It remains to recover $\Omega$ by means of its boundary curves.  By
the Jordan decomposition theorem, for every $i =1, \ldots, N$, we can
always decompose $\R^2$ into an interior of the curve $\gamma_i$ and
an exterior as
  \begin{align} \label{eq:int} \operatorname{int}(\gamma_i) & :=
    \left\{ x\in \R^2\setminus \Gamma_i :
      |\mathcal{I}(\gamma_i,x)| =
                                                              1\right\},\\
    \operatorname{ext}(\gamma_i) & := \left\{ x\in \R^2\setminus
                                   \Gamma_i :
                                   \mathcal{I}(\gamma_i,x) = 0
                                   \right\}.
	\end{align}
        Here, the absolute value in \eqref{eq:int} accounts for both
        counter-clockwise and clockwise oriented curves.  Indeed, if
        $\gamma_i$ is oriented counter-clockwise, we have
        $\mathcal{I}(\gamma_i,x) = 1 $ for all
        $x\in \operatorname{int}(\gamma_i)$, while for a curve
        $\gamma_i$ oriented clockwise we have
        $\mathcal{I}(\gamma_i,x) = -1 $ for all
        $x\in \operatorname{int}(\gamma_i)$.  Via elementary
        combinatorics, one can then recover the original bounded set
        $\Omega$ as
\begin{align}\label{eq:omega_via_winding}
  \Omega = \left\{x \in \R^2\setminus \bigcup_{i=1}^N \Gamma_i:
  \sum_{i=1}^N \mathcal{I}(\gamma_i,x)\equiv 1 \, (\mathrm{mod} \, 2) 
  \right\}.  
\end{align}
In fact, with the orientations of the boundary curves chosen for
identity \eqref{eq:chosen_normal} to hold, we even have
$\sum_{i=1}^N \mathcal{I}(\gamma_i,x) = \chi_\Omega(x)$ for
$x\in \R^2\setminus \bigcup_{i=1}^N \Gamma_i$.  However, to streamline
the arguments in this paper, not fixing the orientation and
taking the sum modulo 2 instead is more convenient.

We now extend these notions to collections of regular, closed, but not
necessarily simple curves in the Sobolev space $H^2(\Sph^1; \R^2)$ and
thus having square integrable curvature. We note from the start that
the formulas in \eqref{eq:chosen_normal}--\eqref{eq:second_derivative}
clearly remain valid a.e.\ for such curves parametrized with constant
speed.

\begin{Def}
  \label{def:H2curves}
  Let $\sigma>0$.  Let $I \subset \N$ be finite, let
  $\{\gamma_i\}_{i\in I} \subset H^2(\Sph^1;\R^2)$ be a collection of
  regular closed curves, and for each $i \in I$ let
  \begin{align}
    \label{eq:kappai}
    \kappa_i := { (\gamma_i')^\perp \cdot \gamma_i'' \over |\gamma'_i|^3}
    \in L^2(\Sph^1)
  \end{align}
be the curvature of $\gamma_i$.  We abbreviate
$\gamma := \{\gamma_i\}_{i\in I}$ and
$\Gamma:= \bigcup_{i \in I} \Gamma_i$, where
$\Gamma_i := \gamma_i([0,1])$.  We then define
\begin{align}
    \label{eq:Fhat}
  \hat F_{\infty,\sigma} (\gamma) := \sum_{i\in I} \left(
  \sigma L(\gamma_i) + \frac{\pi}{2} \int_0^1 \kappa_i^2(t)
  |\gamma_i'(t)| \intd t \right)
	\end{align}
	and, for $x\in \R^2\setminus \Gamma$,
	\begin{align}
          \mathcal{I}(\gamma,x):= \sum_{i\in I} \mathcal{I}(\gamma_i,x).
	\end{align}
        Finally, we define
	\begin{align}\label{eq:winding_interior}
          A_{\gamma}^o := \{ x \in \R^2\setminus \Gamma :
          \mathcal{I}(\gamma,x) \equiv 1 \, (\mathrm{mod}\, 2) \}. 
	\end{align}
\end{Def}

Notice that while $\Omega = A_{\gamma}^o$ when every curve $\gamma_i$
is simple, this need not hold in the relaxation process: If two
\emph{interior} boundaries collapse, as in the example in Figure
\ref{fig:collapse_interior}, then $A_{\gamma}^o$ excludes a
one-dimensional segment. This exceptional set of course has measure
zero, but needs to be taken care of in topological statements,
motivating the following definition.

\begin{figure}
 	\centering
	\includegraphics{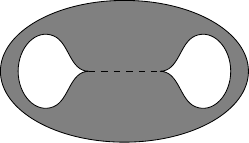}
        \caption{\label{fig:collapse_interior} An example of a set
          with a ``collapsed'' interior boundary shown as dashed. }
\end{figure}

\begin{Def}[Bellettini, Mugnai \cite{belletini2004characterization}]
  Given a set of finite perimeter $\Omega \subset \R^2$, we define
  the open set
\begin{align}
    \label{eq:Omstar}
  \Omega^* := \left\{ x\in \R^2 : \exists r>0 : | B_r(x) \setminus
  \Omega |=0 \right\}, 
\end{align}
while $\partial^*\Omega$ denotes the reduced boundary of $\Omega$.
The set of its system of $H^2$-boundary curves is then
  defined as
\begin{align}
  \begin{split}
    G(\Omega) & := \Big\{ \{\gamma_i\}_{i\in I} \subset H^2( \Sph^1
    ;\R^2): I \subset \N, \ |I| < \infty,
    \ \partial^* \Omega \subset \Gamma, \\
    & \qquad \qquad \qquad \Omega^*= \mathrm{int}\left(A_\gamma^o \cup
      \Gamma \right) ,\ |\gamma_i'| \equiv \text{const} \ \forall
    i \in I \Big\} ,
  \end{split}
\end{align}
with the convention that $G(\Omega) := \emptyset$ if such a system
  of curves does not exist.
\end{Def}
\noindent In the example of Figure \ref{fig:collapse_interior} the
  system of boundary curves will consist of an outer circle and a
  single interior curve which traverses the collapsed interior
  boundary interval twice. Here the set $A_\gamma^o$ is shown in gray,
  while the set $\Omega^*$ is obtained from $A_\gamma^o$ by adding
  back the white interval without the cusp points (resulting in a disk
  with two holes).

  The following representation of the relaxed elastica functional
  was established by Bellettini and Mugnai.

  \begin{thm}[Bellettini, Mugnai
    \cite{belletini2004characterization}]\label{thm:relaxation}
  For a bounded set $\Omega \subset \R^2$ of finite perimeter, we have
	\begin{align}
          F_{\infty,\sigma}(\Omega) = \inf_{\gamma \in G(\Omega)} \hat
          F_{\infty,\sigma}(\gamma). 
	\end{align}
\end{thm}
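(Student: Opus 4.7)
I would prove the identity by the standard two-sided argument, treating the $\liminf$ and $\limsup$ inequalities separately.

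For the $\liminf$ inequality $F_{\infty,\sigma}(\Omega) \geq \inf_{\gamma \in G(\Omega)} \hat F_{\infty,\sigma}(\gamma)$, I would pick a sequence of regular sets $\Omega_n \to \Omega$ in $L^1$ with $\tilde F_{\infty,\sigma}(\Omega_n) \to F_{\infty,\sigma}(\Omega)$, decompose each $\partial \Omega_n$ into its finite collection of smooth Jordan boundary curves $\{\gamma_i^n\}_{i \in I_n}$ as in the discussion preceding \eqref{eq:omega_via_winding}, and reparametrize them by constant speed. Fenchel's theorem together with the Cauchy--Schwarz inequality gives, for every closed curve, $L(\gamma_i^n) \int_0^1 (\kappa_i^n)^2 |\gamma_i^{n\prime}|\intd t \geq 4\pi^2$, so each curve carries an elastica contribution bounded below by $2\pi^3 / L(\gamma_i^n)$. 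Combined with the $\sigma$-weighted perimeter bound, this forces a uniform upper bound on $|I_n|$ and a lower bound on the lengths of the curves that do not disappear in the limit. Discarding the vanishing curves (whose total contribution to $\chi_{\Omega_n}$ is $o(1)$ in $L^1$) and passing to a subsequence, the remaining $\gamma_i^n$ are bounded in $H^2(\Sph^1;\R^2)$, hence converge weakly in $H^2$ and strongly in $C^1$ to regular limit curves $\gamma_i$. Weak-$H^2$ lower semicontinuity of $\gamma \mapsto \int |\kappa|^2 |\gamma'|\intd t$ and continuity of the length under $C^1$-convergence yield $\hat F_{\infty,\sigma}(\gamma) \leq \liminf_n \tilde F_{\infty,\sigma}(\Omega_n)$. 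It remains to check $\gamma = \{\gamma_i\} \in G(\Omega)$: the inclusion $\partial^* \Omega \subset \Gamma$ follows from $L^1$-convergence of $\chi_{\Omega_n}$ together with uniform $C^0$-convergence of the curves, and the identity $\Omega^* = \operatorname{int}(A_\gamma^o \cup \Gamma)$ is obtained by applying the winding-number representation \eqref{eq:omega_via_winding} to each $\Omega_n$ and passing to the limit using continuity of $\mathcal I(\cdot,x)$ for $x \notin \Gamma$.

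For the $\limsup$ inequality $F_{\infty,\sigma}(\Omega) \leq \inf_{\gamma \in G(\Omega)} \hat F_{\infty,\sigma}(\gamma)$, I would fix an arbitrary admissible system $\gamma = \{\gamma_i\}_{i\in I} \in G(\Omega)$ and build a recovery sequence of regular sets $\Omega_n$ with $\Omega_n \to \Omega$ in $L^1$ and $\tilde F_{\infty,\sigma}(\Omega_n) \to \hat F_{\infty,\sigma}(\gamma)$. The idea is to approximate each $\gamma_i$ by a $C^\infty$ curve in the $H^2$ norm, and then to desingularize: wherever two strands of the approximating family touch (either inside one curve as in Figure \ref{fig:collapse_interior}, or between two different curves), I would separate them by a small distance $\eps_n \to 0$ and interpolate with short $C^2$ caps whose curvature is of order $\eps_n^{-1}$ and whose arclength is $O(\eps_n)$, so that the additional elastica contribution is $O(\eps_n)$ and the added perimeter is $O(\eps_n)$. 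The resulting family consists of pairwise disjoint smooth Jordan curves, and the regular set $\Omega_n$ defined by the winding-number formula \eqref{eq:omega_via_winding} satisfies $|\Omega_n \triangle \Omega^*| = O(\eps_n L)$, hence $\Omega_n \to \Omega$ in $L^1$, while $\tilde F_{\infty,\sigma}(\Omega_n) \to \hat F_{\infty,\sigma}(\gamma)$ by $H^2$-continuity of the elastica integrand on regular curves.

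The main obstacle is the recovery sequence in the presence of a large singular set of the limit configuration. The contact set between strands of $\gamma$ need not be discrete -- it may have accumulation points or even positive $\Hd^1$-measure, as recalled in \cite[Example~1]{bellettini1993semicontinuity} -- so the desingularization must be performed simultaneously over a possibly complicated closed set, with bookkeeping to ensure that the added caps do not create new intersections and that their combined energy cost vanishes in the limit. Handling this uniformly, together with the density of $C^\infty$ regular closed curves in the $H^2$-regular subset of $H^2(\Sph^1;\R^2)$, is the technical heart of the argument and is essentially the content of \cite{belletini2004characterization,bellettini1993semicontinuity}; I would invoke their constructions to complete the proof.
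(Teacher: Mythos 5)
The paper does not prove this statement: it is stated as a cited theorem of Bellettini and Mugnai \cite{belletini2004characterization} and used as a black box. Your sketch is therefore an addition rather than a comparison, and you are right that the real content lives in \cite{belletini2004characterization,bellettini1993semicontinuity}, which you appropriately invoke for the technical core.

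That said, there is a concrete error in your energy accounting for the recovery sequence, and it matters because it is precisely where the difficulty lies. You propose to desingularize by separating two touching strands by a distance $\eps_n$ and joining them with caps of curvature $O(\eps_n^{-1})$ and arclength $O(\eps_n)$, claiming the added elastica contribution is $O(\eps_n)$. But
\begin{align*}
\int_{\text{cap}} \kappa^2 \intd \Hd^1 \ \sim\ \eps_n^{-2}\cdot \eps_n \ =\ \eps_n^{-1} \ \longrightarrow\ \infty,
\end{align*}
so the naive cap blows up the elastica energy rather than making it vanish. This is exactly why the elastica relaxation is subtle: doubled or tangentially touching arcs cannot simply be ``pulled apart and capped.'' The Bellettini--Dal Maso--Paolini and Bellettini--Mugnai constructions open up the collapsed arcs gradually, controlling the curvature of the opening along its entire length rather than concentrating it in a short cap, and the bookkeeping over a possibly non-discrete singular set is what makes the proof non-trivial. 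Your $\liminf$ half (Fenchel plus Cauchy--Schwarz to bound the number of curves, weak $H^2$ compactness and lower semicontinuity, identification of the limit system in $G(\Omega)$ via winding numbers) is in essence correct and matches the strategy of those references, but the $\limsup$ half as written would fail, and you should not present it as if the only difficulty were the size of the singular set.
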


\subsection{Outline of the proof}

Theorem \ref{thm:main} being a $\Gamma$-convergence statement, its
proof is roughly split into a compactness part, a lower
bound, and an upper bound.  However, here we take ``compactness'' to
mean that limit sets essentially have an $H^2$-regular boundary rather
than showing that all finite energy sequences have an $L^1$-convergent
subsequence, which is wrong, as noted below Theorem
\ref{thm:main}.

The first step is to rewrite the energy in the
form, following the ideas of Muratov and Simon
\cite{muratov2018nonlocal}:
\begin{align}\label{eq:repr_simplified}
  \begin{split}
    F_{\lambda,\alpha} (\Omega) & = \left( 1-\frac{1}{2\pi\alpha^2}
    \right) P(\Omega)\\
    & \quad +\frac{1}{ 4 \pi \alpha} \int_{\partial^* \Omega}
    \int_{H^0_-(\nu(y)) \Delta \lambda (\Omega - y )} \left| \nu(y)
      \cdot \frac{z}{|z|} \right| \frac{e^{-\alpha |z|}}{ |z|} \intd^2
    z \intd \mathcal{H}^1(y),
  \end{split}
\end{align}
where $H^0_-(\nu(y))$ denotes the half-plane through
  $0$ sharing the outward normal $\nu(y)$ with $\Omega$ at
$y\in \partial^* \Omega$.  See Figure
\ref{fig:domains_of_integration} for an illustration and Lemma
\ref{lem:energy_on_boundary} for the precise statement.

        The strategy for proving compactness loosely follows ideas of
        the derivation of plate theory by Friesecke, James, and
        M{\"u}ller \cite{friesecke2002theorem} in that we provide an
        $L^2$-bound for difference quotients along the sequence.
        However, our situation is much simpler as the representation
        \eqref{eq:repr_simplified} directly provides a quantitative,
        non-local comparison of the set with its tangent half-planes
        without having to first establish further rigidity properties.
        Therefore, two tangent half-planes at two close boundary
        points cannot deviate too much without increasing the energy.
        We can also only have finitely many boundary curves as the
        elastica energy of short, closed curves blows up.

        For the upper and lower bounds, we introduce an anisotropic
        version of the blowup used in the identity
        \eqref{eq:repr_simplified}, so that we can expect the blowup
        to approach the subgraph of a parabola with curvature at the
        vertex determined by the curvature of $\Omega$.  In the
        upper bound, we will be able to work with a fixed and regular
        set to make this intuition rigorous and to compute the
        resulting energy contribution.  We will argue similarly for
        the lower bound, but even if we can restrict ourselves to only
        considering sequences of regular sets by a density argument,
        we will have to deal with quite a few measure-theoretic
        details to handle the geometric consequences of weak
        $H^2$-convergence.
        
        	\begin{figure}
		\centering
		\includegraphics{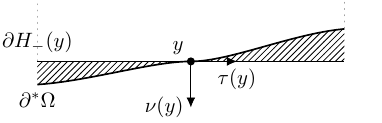}
		\caption{\label{fig:domains_of_integration} Sketch
                  indicating the domain of integration in $z$
                  (hatched) around $y \in \partial^* \Omega$ in the
                  representations \eqref{eq:repr_simplified} and
                  \eqref{eq:representation_lower_bound}.  The domain
                  $\Omega$ is located above the solid curve and the
                  half-plane $H_-(y)$ is located above the solid
                  line, respectively.}
	\end{figure}	

\section{Preliminaries and existence of minimizers}
\label{sec:prelim}

Before we turn to the individual steps, we present a rewriting via
integration by parts of the non-local term in $F_{\lambda,\alpha}$ in
terms of a mixed boundary/bulk integral.  A similar computation was
already crucial in identifying the critical $\Gamma$-limit in the case
of dipolar repulsion \cite{muratov2018nonlocal}.

To this end we solve the equation
$\Delta \Phi_\alpha (|z|) = \frac{e^{-\alpha |z|}}{|z|}$ in
  $\mathbb R^2 \backslash \{0\}$ with sufficient decay at infinity.
This gives
\begin{align}
	\Phi_\alpha(r) & =  \frac{1}{\alpha} \operatorname{E_1}(\alpha r),\\
	\Phi_\alpha'(r) & = -  \frac{e^{-\alpha r}}{\alpha r},\label{Phi_prime}
\end{align}
where $E_1(z) := \int_{z}^\infty \frac{e^{-t}}{t} \intd t$ for $z>0$
is the exponential integral.  For $\nu \in \Sph^1$ and
$y\in \partial^* \Omega$ for a set $\Omega$ of finite perimeter we
also define
\begin{align}
  H_-^0(\nu)& := \left\{ x  \in \R^2 : \nu \cdot x< 0 \right\},\\
  H_-(y)& := \left\{ x  \in \R^2 : \nu(y) \cdot (x- y) < 0
          \right\},\label{def:half-space} 
\end{align}
where $\nu (y)$ denotes the outward unit normal of $\Omega$ at $y$.
Let furthermore
\begin{align}
  R_\nu & := e_2 \otimes \nu - e_1 \otimes \nu^\perp, \\
	A_\lambda & := \lambda e_1\otimes e_1 + \lambda^2 e_2\otimes
                    e_2, 
\end{align}
where $\nu^\perp = (-\nu_2, \nu_1)$ is the 90-degree counter-clockwise
rotation of $\nu = (\nu_1, \nu_2)$, i.e., $R_\nu\in SO(2)$ is the
unique rotation such that $R_\nu \nu = e_2$, and $A_\lambda$ is a
matrix of anisotropic dilations along the first and the second
coordinate directions. Here and everywhere below $z = (z_1, z_2)$.

\begin{lemma}\label{lem:energy_on_boundary}
	Let $\Omega \in \mathcal{A}_\pi$.
	Then we have the representations
	\begin{align}\label{eq:representation_lower_bound}
	  \begin{split}
            F_{\lambda,\alpha}(\Omega)
            & = \left( 1- \frac{1}{2\pi \alpha^2}\right) P(\Omega)  \\
            &\quad + \frac{1}{4\pi \alpha} \int_{\partial^* \Omega}
            \int_{ H_-^0(\nu(y)) \Delta \lambda (\Omega - y) }
            \left| \nu(y) \cdot \frac{ z}{|z|} \right|
            \frac{e^{-\alpha|z|}}{|z|} \intd^2 z \intd \Hd^1(y)
	  \end{split}\\
	  \begin{split}
            & = \left( 1- \frac{1}{2\pi \alpha^2}\right) P(\Omega)  \\
            &\quad + \frac{1}{4\pi \alpha \lambda^2} \int_{\partial^*
              \Omega} \int_{ H_-^0(e_2) \Delta A_\lambda R_{\nu(y)}
              (\Omega - y) } \left| z_2 \right| \frac{e^{-\alpha
                \sqrt{z_1^2 + \frac{z_2^2}{\lambda^2} }}}{z_1^2 +
              \frac{z_2^2}{\lambda^2}} \intd^2 z \, \intd
            \Hd^1(y).
	  \end{split}\label{eq:representation_revised}
	\end{align}
        In particular, we have
          \begin{align}
            \label{eq:FlaP}
            F_{\lambda,\alpha}(\Omega)
            \geq \left( 1- \frac{1}{2\pi \alpha^2}\right) P(\Omega).
          \end{align}
\end{lemma}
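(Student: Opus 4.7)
The plan is to convert the non-local double integral in $F_{\lambda,\alpha}$ into a boundary--bulk integral by integration by parts via the potential $\Phi_{\lambda\alpha}$ (for which $\Delta \Phi_{\lambda\alpha}(|z|) = e^{-\lambda\alpha|z|}/|z|$), and then to peel off the contribution of the tangent half-plane $H_-^0(\nu(y))$; this half-plane piece will account precisely for the coefficient $-\frac{1}{2\pi\alpha^2} P(\Omega)$ and leave a manifestly non-negative remainder on the symmetric difference.

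First, for a.e.\ $x$ in the measure-theoretic interior of $\Omega$, the exponential decay of $\Phi_{\lambda\alpha}$ at infinity together with the generalized divergence theorem applied on $\stcomp\Omega$ with outward normal $-\nu(y)$ on $\partial^*\Omega$ give
\[
  \int_{\stcomp\Omega} \frac{e^{-\lambda\alpha|x-y|}}{|x-y|} \intd^2 y
  = \frac{1}{\lambda\alpha} \int_{\partial^*\Omega}
  \frac{e^{-\lambda\alpha|x-y|}}{|x-y|}\cdot\frac{(y-x)\cdot\nu(y)}{|y-x|} \intd \Hd^1(y).
\]
After applying Fubini and performing the blow-up substitution $z = \lambda(x-y)$ in the remaining $x$-integral, the non-local term in $F_{\lambda,\alpha}(\Omega)$ takes the form
\[
  -\frac{1}{4\pi\alpha} \int_{\partial^*\Omega} \int_{\lambda(\Omega-y)}
  \frac{e^{-\alpha|z|}}{|z|}\cdot\frac{z\cdot\nu(y)}{|z|} \intd^2 z \intd \Hd^1(y).
\]
A direct polar-coordinate computation yields $\int_{H_-^0(\nu)} \frac{e^{-\alpha|z|}}{|z|}\cdot\frac{z\cdot\nu}{|z|} \intd^2 z = -\frac{2}{\alpha}$; moreover $z\cdot\nu(y) > 0$ on $\lambda(\Omega-y)\setminus H_-^0(\nu(y))$ and $<0$ on $H_-^0(\nu(y))\setminus\lambda(\Omega-y)$, so subtracting the half-plane piece collapses the two off-diagonal contributions into a single absolute-value integral on the symmetric difference, producing representation \eqref{eq:representation_lower_bound}.

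To pass to \eqref{eq:representation_revised} I would compose, inside the inner integral, the rotation $R_{\nu(y)}$ (sending $\nu(y)\mapsto e_2$, hence $H_-^0(\nu(y))\mapsto H_-^0(e_2)$, $\lambda(\Omega-y)\mapsto \lambda R_{\nu(y)}(\Omega-y)$ and $\nu(y)\cdot z \mapsto z_2$, while preserving $|z|$) with the diagonal dilation $\mathrm{diag}(1, 1/\lambda)$, whose Jacobian is $1/\lambda$ and which sends $\lambda R_{\nu(y)}(\Omega-y)$ to $A_\lambda R_{\nu(y)}(\Omega - y)$, leaves $H_-^0(e_2)$ invariant, and rescales $|z|$ and $z_2$ into $\sqrt{z_1^2 + z_2^2/\lambda^2}$ and $z_2/\lambda$, respectively. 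Collecting factors yields the overall $1/\lambda^2$ prefactor in \eqref{eq:representation_revised}, and the bound \eqref{eq:FlaP} is then immediate from \eqref{eq:representation_lower_bound} since the symmetric-difference integrand is non-negative. The main technical obstacle lies in the first step: justifying the integration by parts for a set of only finite perimeter, which requires the distributional divergence theorem together with a verification of absolute convergence of the double integral (so that Fubini applies), or an approximation of $\Omega$ by smooth sets and a passage to the limit using continuity of both terms in $F_{\lambda,\alpha}$ along the approximation.
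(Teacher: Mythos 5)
Your approach is essentially the one the paper takes: introduce the radial potential $\Phi_{\lambda\alpha}$ with $\Delta\Phi_{\lambda\alpha}(|z|)=e^{-\lambda\alpha|z|}/|z|$, integrate by parts over $\stcomp\Omega$ with the outward normal $-\nu(y)$ on $\partial^*\Omega$, perform the blow-up $z=\lambda(x-y)$, and peel off the tangent half-plane contribution $\int_{H_-^0(\nu)}\frac{e^{-\alpha|z|}}{|z|}\cdot\frac{z\cdot\nu}{|z|}\,\mathrm{d}^2 z=-\frac{2}{\alpha}$ to produce the perimeter coefficient $1-\frac{1}{2\pi\alpha^2}$ and leave the non-negative symmetric-difference remainder; the passage to \eqref{eq:representation_revised} via $R_{\nu(y)}$ followed by the anisotropic scaling is also exactly as in the paper.

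There is one sign slip in your intermediate display: after substituting $y-x=-z/\lambda$ (which supplies one minus sign) into the term that already carries the prefactor $-\frac{\lambda^2}{4\pi}$, the non-local term comes out as
\begin{equation*}
  +\frac{1}{4\pi\alpha}\int_{\partial^*\Omega}\int_{\lambda(\Omega-y)}\frac{e^{-\alpha|z|}}{|z|}\cdot\frac{z\cdot\nu(y)}{|z|}\,\mathrm{d}^2 z\,\mathrm{d}\Hd^1(y),
\end{equation*}
not with the leading minus you wrote. With your sign the decomposition into half-plane plus symmetric difference would yield $\bigl(1+\frac{1}{2\pi\alpha^2}\bigr)P(\Omega)$ minus a non-negative term, which is not \eqref{eq:representation_lower_bound}; with the correct $+$ it closes exactly as you describe. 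On the rigor point you flag at the end: the paper opts for the kernel-regularization route, inserting a short-range cutoff $\eta_\eps$ (to excise the singularity at $y=x$, which is why the naive integration by parts for merely finite-perimeter sets is not immediate) and a long-range cutoff $\eta_R$, applying Gauss-Green with the smooth kernel, and then showing the two cutoff error terms vanish via a Dirac-approximation argument and dominated convergence. That is essentially your first proposed remedy; the set-approximation alternative you mention would also work but requires a separate continuity argument for the non-local term and is not what the paper does.
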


These representations have the advantage that the non-local term
penalizes the deviation of $\Omega$ from its tangent half-plane at
each $y\in \partial^* \Omega$, see Figure
\ref{fig:domains_of_integration}.  Furthermore, the perimeter term
already exhibits the correct leading order behaviour.  In particular,
together with Proposition \ref{prop:expansion} we immediately get the
already-known $\Gamma$-convergence statement of Theorem
  \ref{thm:pegon} for sufficiently strong screening as a corollary.

The proof of Lemma \ref{lem:energy_on_boundary} relies on integrating
the kernel by parts, as already mentioned, and at each point of the
boundary moving the expected non-local contribution of the tangent
half-plane to the perimeter.

With this representation, the proof of existence of minimizers is a
surprisingly simple computation.

\begin{proof}[Proof of Proposition \ref{lem:minimizers}]
  As is common in the field, the full proof operates by the
  concentration-compactness dichotomy.  We refer the reader to, for
  example, the proof of \cite[Lemma 4.4]{muratov2018nonlocal} (see
  also \cite{knupfer2016low}) for the details.  In the
  following we only prove that for all $\beta \in (0,1)$ we have
	\begin{align}\label{eq:strict_subadd}
          \inf_{\mathcal{A_{\pi}}} F_{\lambda,\alpha} <
          \inf_{\mathcal{A_{\beta \pi}}} F_{\lambda,\alpha} +
          \inf_{\mathcal{A}_{(1-\beta) \pi}} F_{\lambda,\alpha}, 
	\end{align}
	which can then be used to rule out the splitting case in the
        concentration-compactness principle.
		
	For every $\Omega \in \mathcal A_{\pi}$, we compute, using the
        representation \eqref{eq:representation_lower_bound} and the
        condition $\alpha > \frac{1}{\sqrt 2 \pi}$, that
	\begin{align}
	  \begin{split}
            F_{\lambda,\alpha}\left(\beta^\frac{1}{2} \Omega\right)
            & =  \beta^\frac{1}{2} \left( 1- \frac{1}{2\pi
                \alpha^2}\right) P(\Omega)  \\ 
            &\quad + \frac{\beta}{4\pi \alpha} \int_{\partial^* \Omega}
            \int_{ H_-(\nu(y)) \Delta \lambda (\Omega - y) } \left|
              \nu(y) \cdot \frac{ z}{|z|} \right|
            \frac{e^{-\beta^{\frac{1}{2}}\alpha|z|}}{|z|} \intd^2 z
            \intd \Hd^1(y)\\ 
            &\geq \beta \left( 1- \frac{1}{2\pi \alpha^2}\right)
            P(\Omega) + \beta^\frac{1}{2} (1 - \beta^\frac{1}{2})
              \left( 1- \frac{1}{2\pi \alpha^2}\right) P(\Omega)   \\ 
            &\quad + \frac{\beta}{4\pi \alpha} \int_{\partial^* \Omega}
            \int_{ H_-(\nu(y)) \Delta \lambda (\Omega - y) } \left|
              \nu(y) \cdot \frac{ z}{|z|} \right|
            \frac{e^{-\alpha|z|}}{|z|} \intd^2 z \intd \Hd^1(y)\\
            & \geq \beta \inf_{\mathcal{A_{\pi}}} F_{\lambda,\alpha}
            + \beta^\frac{1}{2} (1 - \beta^\frac{1}{2}) \left( 1-
                \frac{1}{2\pi \alpha^2}\right) P(\Omega),
	  \end{split}
	\end{align}
	and thus
        $\inf_{\mathcal A_{\beta\pi}} F_{\lambda,\alpha}> \beta
        \inf_{\mathcal{A_{\pi}}} F_{\lambda,\alpha}$.  Similarly, we
        have
        $ \inf_{\mathcal A_{(1-\beta)\pi}} F_{\lambda,\alpha}>
        (1- \beta) \inf_{\mathcal{A_{\pi}}} F_{\lambda,\alpha}$, so
        adding the two inequalities gives the claim
        \eqref{eq:strict_subadd}.
        
        The rest of the statement can be proved as in
        \cite[Proposition 2.1]{knupfer2013isoperimetric}. The
        regularity theory for quasi-minimizers of the perimeter
        implies $C^{1,\beta}$-regularity of $\Omega$ for any
        $\beta \in (0, \frac12)$, see for example \cite[Theorem
        21.8]{maggi} or \cite[Theorem 1.4.9]{rigot00}.  By
        \cite[Theorem 5.2]{gatto}, the potential
        \begin{align}
          v(x) := \frac{1}{ 2 \pi} \int_\Omega
          \frac{e^{-\alpha|x-y|}}{|x-y|} \intd x, \qquad x \in
          \R^2,
        \end{align}
        is of class $C^{0,\alpha}(\R^2)$ for any $\alpha \in (0,1)$.
        Therefore, the Euler-Lagrange equation
         \begin{align}
           \kappa(x) +  v(x) = \mu,\qquad x \in
           \partial \Omega,
        \end{align}
        where $\mu \in \R$ is the Lagrange multiplier for the
        mass constraint, holds
        in the weak sense in a local Cartesian frame in which
          $\partial \Omega$ is a $C^{1,\beta}$ graph.  Consequently,
          $\partial \Omega$ is of class $C^{2,\alpha}$ for any
          $\alpha \in (0,1)$. In particular, $\Omega$ is a bounded
          open set with finitely many holes. Finally, as the kernel is
          repulsive, any minimizer $\Omega$ must be connected, since
          otherwise moving different connected components far apart
          lowers the energy.
\end{proof}
	
\begin{proof}[Proof of Lemma \ref{lem:energy_on_boundary}]
  As in \cite{muratov2018nonlocal}, the proof relies on the
  application of the Gauss-Green theorem to the double integral in
  \eqref{energy_large_mass}. However, due to a mild singularity of the
  kernel absent in \cite{muratov2018nonlocal} we need an additional
  approximation argument to express the non-local term as an integral
  over the interior and the reduced boundary of the set of finite
  perimeter $\Omega$.  To that end, let $\eta \in C^\infty(\R)$ be a
  cutoff function with $\eta' \leq 0$ such that $\eta(t) = 1$ for all
  $t \leq \frac12$ and $\eta(t) = 0$ for all $t \geq 1$. For
  $\eps > 0$ and $R > 0$ we define a short-range cutoff
  $\eta_\eps(t) = \eta(t / \eps)$ and a long-range cutoff
  $\eta_R(t) = \eta(t / R)$, respectively, and observe that by the
  monotone convergence theorem we have
    \begin{align}\label{eq:cutoff_approx}
      \int_\Omega \int_{\stcomp{\Omega}} \frac{e^{-\lambda
      \alpha |x-y|}}{|x-y|} \intd^2 y \intd^2 x = \lim_{\eps \to 0, \, R \to
      \infty} \int_\Omega \int_{\stcomp{\Omega}} (1 - \eta_\eps(|y -
      x|)) \eta_R(|y - x|) \frac{e^{-\lambda
      \alpha |y-x|}}{|y-x|} \intd^2 y \intd^2 x.
    \end{align}
    Then recalling the definition of $\Phi_\alpha$ and integrating by
    parts in $y$, which is now justified \cite{ambrosio}, with the
    help of Fubini's theorem we obtain
    \begin{align} \label{bigints}
      \begin{split}
        & \quad \int_\Omega \int_{\stcomp{\Omega}} (1 - \eta_\eps(|y -
        x|)) \eta_R(|y - x|) \frac{e^{-\lambda
            \alpha |y-x|}}{|y-x|} \intd^2 y \intd^2 x \\
        & = \int_\Omega \int_{\stcomp{\Omega}} (1 - \eta_\eps(|y -
        x|)) \eta_R(|y - x|) \Delta_y \Phi_{\lambda
          \alpha}(|y - x|) \intd^2 y \intd^2 x \\
        & = - \int_\Omega \int_{\partial^* \Omega} (1 - \eta_\eps(|y -
        x|)) \eta_R(|y - x|) \, \nu(y) \cdot \nabla_y \Phi_{\lambda
          \alpha}(|y - x|) \intd \mathcal H^1(y) \intd^2 x \\
        &\qquad + \int_\Omega \int_{\stcomp{\Omega}} \eta_R(|y - x|)
        \nabla_y \eta_\eps(|y - x|) \cdot \nabla_y \Phi_{\lambda
          \alpha}(|y - x|) \intd^2 y \intd^2 x \\
        &\qquad - \int_\Omega \int_{\stcomp{\Omega}} (1 - \eta_\eps(|y
        - x|)) \nabla_y \eta_R(|y - x|) \cdot \nabla_y \Phi_{\lambda
          \alpha}(|y - x|) \intd^2 y \intd^2 x .
      \end{split}
    \end{align}

    Notice that from \eqref{Phi_prime} we have
    \begin{align}\label{fubini}
      |\nu(y) \cdot \nabla_y \Phi_{\lambda
      \alpha}(|y-x|)| \leq \frac{e^{-\alpha \lambda |x -
      y|}}{\alpha \lambda |x-y|}, 
    \end{align}
    which is integrable over
    $(x,y) \in \Omega \times \partial^* \Omega$ by Fubini's
    theorem. Hence applying the dominated convergence theorem, we
    obtain
    \begin{align}\label{eq:main_term}
      \begin{split}
        \lim_{\eps \to 0, \, R \to \infty} \int_\Omega
        \int_{\partial^* \Omega} (1 - \eta_\eps(|y - x|)) \eta_R(|y -
        x|) \, \nu(y) \cdot \nabla_y \Phi_{\lambda
          \alpha}(|y - x|) \intd \mathcal H^1(y)  \intd^2 x  \\
        = \int_\Omega \int_{\partial^* \Omega} \nu(y) \cdot \nabla_y
        \Phi_{\lambda
          \alpha}(|y - x|) \intd \mathcal H^1(y)  \intd^2 x \\
        = - \int_{\partial^* \Omega} \int_{\Omega} \nu(y) \cdot
        \nabla_x \Phi_{\lambda \alpha}(|y-x|)\intd^2 x \intd
        \Hd^1(y).
     \end{split}
    \end{align}
    Similarly, the last term in the right-hand side of \eqref{bigints}
    vanishes in the limit.
    
    Thus, it remains to evaluate the second integral on the right-hand
    side of \eqref{bigints}, which for $R > 2 \eps$ can be written as
    \begin{align}
    \begin{split}
      \int_\Omega \int_{\stcomp{\Omega}} 
      \eta_R(|y - x|) \nabla_y \eta_\eps(|y - x|) \cdot \nabla_y
      \Phi_{\lambda 
      \alpha}(|y - x|) \intd^2 y \intd^2 x      \\
      = {1 \over \eps} \int_\Omega \int_{B_\eps(x)}  
      |\eta'(\eps^{-1} |y - x|) \Phi'_{\lambda \alpha}(|y - x|)|
      \intd^2 y \intd^2 x .
     \end{split}
    \end{align}
    Therefore, by \eqref{Phi_prime} and defining
    $\phi_\eps(x) := \frac{1}{ 2 \pi \eps |x|} |\eta'(|x|/\eps)|$ for
    $x\in \R^2$ we have
    \begin{align}
     \begin{split}
     & \quad \frac{1}{2 \pi} \left| \int_\Omega \int_{\stcomp{\Omega}} 
      \eta_R(|y - x|) \nabla_y \eta_\eps(|y - x|) \cdot \nabla_y
      \Phi_{\lambda 
      \alpha}(|y - x|) \intd^2 y \intd^2 x \right|      \\
     & \leq  \frac{1}{ \lambda \alpha } \int_\Omega \int_{B_\eps(x)}  \frac{
      |\eta'(\eps^{-1} |y - x|)|}{2
      \pi \eps |y - x|} \intd^2 y \intd^2 x\\
      &  = \frac{1}{ \lambda \alpha}
      \int_\Omega \int_{\stcomp{\Omega}} \phi_\eps(x - y) \intd^2 y
      \intd^2 x.
      \end{split}
    \end{align}
    The function $\phi_\eps$ is non-negative with $\phi_\eps(x) = 0$
    for all $x \in \R^2$ such that $|x| > \eps$, and
    $\int_{\R^2} \phi_\eps(x) \intd^2 x = 1$, so that it is an
    approximation of a Dirac delta as $\eps \to 0$. Thus by the
    standard approximation argument for the characteristic function of
    $\Omega$ in $L^1(\R^2)$ by uniformly bounded smooth functions with
    compact support we have
    \begin{align}\label{eq:vanishes}
      \lim_{\eps \to 0} \int_\Omega \int_{\stcomp{\Omega}}
      \phi_\eps(|x - y|) \intd^2 y \intd^2 x = 0.
    \end{align}
    
    Putting together equations \eqref{eq:cutoff_approx},
    \eqref{bigints}, and \eqref{eq:main_term}--\eqref{eq:vanishes} we
    have
	\begin{align}
	  \begin{split}
            \int_{\Omega} \int_{\stcomp{\Omega}} \frac{e^{-\lambda
                \alpha |x-y|}}{|x-y|} \intd^2 y \intd^2 x
            & = \int_{\partial^* \Omega} \int_{\Omega} \nu(y) \cdot
            \nabla_x \Phi_{\lambda \alpha}(|y-x|)\intd^2 x \intd
            \Hd^1(y).
	  \end{split}
	\end{align}
	Therefore, we can write
	\begin{align}\label{eq:splitup}
	  \begin{split}
            & \quad - \int_{\Omega} \int_{\stcomp{\Omega}}
            \frac{e^{-\lambda \alpha |x-y|}}{|x-y|} \intd^2 y  \intd^2
            x  \\ 
            & =  \quad  \int_{\partial^* \Omega}  \int_{\Omega} \nu(y)
            \cdot \frac{ x-y}{|x-y|}  |\Phi'_{\lambda
              \alpha}(|y-x|)|\intd^2 x \intd \Hd^1(y) \\ 
            & = \int_{\partial^* \Omega}  \int_{H_-(y)} \nu(y) \cdot
            \frac{ x-y}{|x-y|}  |\Phi'_{\lambda \alpha}(|y-x|)|\intd^2
            x \intd \Hd^1(y) \\ 
            & \quad + \int_{\partial^* \Omega}  \int_{\Omega\setminus
              H_-(y)} \nu(y) \cdot \frac{ x-y}{|x-y|}  |\Phi'_{\lambda
              \alpha}(|y-x|)|\intd^2 x \intd \Hd^1(y) \\ 
            & \quad -\int_{\partial^* \Omega} \int_{H_-(y) \setminus
              \Omega} \nu(y) \cdot \frac{ x-y}{|x-y|} |\Phi'_{\lambda
              \alpha}(|y-x|)|\intd^2 x \intd \Hd^1(y).
	  \end{split}
	\end{align}
	For every $y \in \partial^* \Omega$, we compute
	\begin{align}
	  \begin{split}
            \int_{H_-(y)} \nu(y) \cdot \frac{ x-y}{|x-y|}
            |\Phi'_{\lambda \alpha}(|y-x|)|\intd^2 x & =
            -\int_{-\frac{\pi}{2}}^{\frac{\pi}{2}} \int_0^\infty
            \frac{e^{-\lambda \alpha r} \cos
                \theta}{\lambda \alpha} \intd r \intd \theta \\
              & = -\frac{2}{\lambda^2 \alpha^2}
              \label{eq:2lam2al2}
	 \end{split}
	\end{align}
	Together with the combinatorics of the sign of
        $\nu(y) \cdot \frac{ x-y}{|x-y|}$ for $x \in H_-(y)$ and
        $x \not \in H_-(y)$, \eqref{eq:splitup} and
          \eqref{eq:2lam2al2} thus result in
	\begin{align}
	  \begin{split}
            & \quad - \int_{\Omega} \int_{\stcomp{\Omega}}
            \frac{e^{-\lambda \alpha |x-y|}}{|x-y|} \intd^2 y  \intd^2 x\\
            & = -\frac{2}{\lambda^2 \alpha^2} P(\Omega) +
            \int_{\partial^* \Omega} \int_{ H_-(y) \Delta \Omega }
            \left| \nu(y) \cdot \frac{ x-y}{|x-y|} \right|
            |\Phi'_{\lambda \alpha}(|y-x|)|\intd^2 x \intd \Hd^1(y)\\
            & = -\frac{2}{\lambda^2 \alpha^2} P(\Omega) +
            \frac{1}{\lambda^2 \alpha} \int_{\partial^* \Omega} \int_{
              H^0_-(\nu(y)) \Delta \lambda(\Omega - y) } \left|
              \nu(y) \cdot \frac{ z}{|z|} \right| \frac{e^{-\alpha
                |z|}}{|z|}\intd^2 z \intd \Hd^1(y),
	  \end{split}
	\end{align}
	which proves equation \eqref{eq:representation_lower_bound}.

	Finally, we calculate
	\begin{align}
	  \begin{split}
            &\quad \int_{ H^0_- (\nu(y)) \Delta \lambda(\Omega -
              y) } \left| \nu(s) \cdot \frac{ z}{|z|} \right|
            \frac{e^{-\alpha |z|}}{|z|} \intd^2 z \\
            & = \frac{1}{\lambda^2} \int_{ H^0_- (e_2) \Delta
              A_\lambda R_{\nu(y)} (\Omega - y) } \left| z_2 \right|
            \frac{e^{-\alpha \sqrt{z_1^2 + \frac{z_2^2}{\lambda^2}
                }}}{z_1^2 + \frac{z_2^2}{\lambda^2}} \intd^2 z,
	  \end{split}
	\end{align}
	giving equation \eqref{eq:representation_revised}.
\end{proof}

\section{Compactness}
\label{sec:compact}

\subsection{Single boundary curves}
We start out by proving compactness for a single sequence of boundary
curves of a finite energy sequence.  By density of regular sets
in the sets of finite perimeter, we may as well assume that the
sequence consists of regular sets.  The main point here is to prove
that the limit is sufficiently regular to have curvature in $L^2$.
       
To this end, the first step is to obtain a discrete $H^1$ estimate for
the normals along the sequence, that is, for fixed $\lambda$ and
$\alpha$.  In order to control the geometry of the curves in the lower
bound, we also need an estimate for how often two boundary points (be
they from the same boundary curve doubling up on itself or from two
different boundary curves) with wildly different tangents can be close
to each other. Hence we also record a consequence of the arguments
pertaining to two mismatched, close-by normals regardless of
which boundary curve they belong to.
        
        \begin{lemma}\label{lemma:discrete}
          Let $\alpha_0 > 0$ and $K > 0$. Then there exist
          $C, C', C'' > 0$ with the following property:
          If $\lambda > 0$, $\Omega \in \mathcal{A}_\pi$ is regular
          and $\gamma :[0,1] \to \R^2$ is a smooth Jordan boundary
          curve of $\Omega$ parametrized by constant speed, then for
          all $\alpha \in (0, \alpha_0)$ and $s \in [-K,K]$ we have
          the estimate
	  \begin{align}\label{eq:norms_boundedness}
	  \begin{split}
            & \quad L(\gamma) \int_0^1 \left|\nu \left(t+ (L(\gamma)
                \lambda)^{-1} s \right)- \nu(t)\right|^2
            \intd t \\
            & \leq C \int_{\Gamma} \int_{ H^0_- (\nu(y)) \Delta
              \lambda(\Omega - y) } \left| \nu(y) \cdot \frac{ z}{|z|}
            \right| \frac{e^{-\alpha |z|}}{|z|} \intd^2 z
            \intd \Hd^1(y) \\
            & \leq C' F_{\lambda,\alpha}(\Omega).
	  \end{split}
          \end{align}
          Furthermore, for
	\begin{align}
	  \begin{split}
            Z_{K,\lambda} & := \big\{ (y_1,y_2) \in \partial \Omega
            \times \partial
            \Omega : \\
            & \qquad \qquad|y_1-y_2| \leq K \lambda^{-1}, \
            \nu(y_1)\cdot \nu (y_2) \leq 0 \big\}.
	  \end{split}
	\end{align}
	we have
	\begin{align}\label{eq:weak_estimate}
          \mathcal{H}^2(Z_{K,\lambda})  \leq  C ''
          P(\Omega)F_{\lambda,\alpha}(\Omega).
	\end{align}
\end{lemma}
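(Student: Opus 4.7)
The plan is to extract the discrete $H^1$ control of the normal from the local penalty structure provided by representation \eqref{eq:representation_lower_bound}: the non-local part of $F_{\lambda,\alpha}(\Omega)$ is, up to a constant, $\int_{\partial^* \Omega} J(y) \intd \Hd^1(y)$ where
\begin{equation*}
J(y) := \int_{H^0_-(\nu(y)) \Delta \lambda(\Omega - y)} \left| \nu(y) \cdot \frac{z}{|z|} \right| \frac{e^{-\alpha|z|}}{|z|} \intd^2 z.
\end{equation*}
The heart of the argument is a \emph{pointwise lower bound}: for any regular $\Omega$ and any pair of boundary points $y_1, y_2 \in \partial \Omega$ with $|y_1 - y_2| \leq K/\lambda$ one has $J(y_1) \geq c(K, \alpha_0)\,|\nu(y_1) - \nu(y_2)|^2$. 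Granted this bound, specializing to $y_1 = \gamma(t)$, $y_2 = \gamma(t + (L \lambda)^{-1} s)$ --- which satisfy the distance hypothesis because $|y_1 - y_2| \leq L \cdot s/(L\lambda) = s/\lambda$ by the constant-speed parametrization --- and integrating over $t \in [0,1]$ using $L \int_0^1 \cdot \intd t = \int_\Gamma \cdot \intd \Hd^1$ yields the first inequality in \eqref{eq:norms_boundedness}; the second inequality then follows by discarding the non-negative perimeter contribution in \eqref{eq:representation_lower_bound}.

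To prove the pointwise lower bound, pass to the rescaled frame centered at $y_1$ with $\nu(y_1) = e_2$, so that $H^0_-(\nu(y_1)) = \{z_2 \leq 0\}$, and place $z_0 := \lambda (y_2 - y_1) \in \partial(\lambda(\Omega - y_1))$ with $|z_0| \leq K$. Near $z_0$, the smooth set $\lambda(\Omega - y_1)$ is approximated to leading order by the translated tangent half-plane $H^0_-(\nu(y_2), z_0) := \{z : \nu(y_2) \cdot (z - z_0) \leq 0\}$, so the symmetric difference being integrated over essentially contains the ``bowtie'' $H^0_-(\nu(y_1)) \Delta H^0_-(\nu(y_2), z_0)$, formed of two infinite wedges of opening angle $\theta := \arccos(\nu(y_1) \cdot \nu(y_2)) \asymp |\nu(y_1) - \nu(y_2)|$. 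Inside a ball $B_\rho(z_0)$ of suitable fixed radius $\rho = \rho(K, \alpha_0)$, a direct computation shows that the weighted bowtie integral is bounded below by $c(K, \alpha_0)\, \theta^2$: the weight $|\nu(y_1) \cdot z/|z|| \lesssim |z_2|/|z_0|$ contributes one factor of $\theta$ (since the wedge lies within height $O(\rho\theta)$ of the line $\{z_2 = 0\}$), the wedge area provides the second factor of $\theta$, and the exponential $e^{-\alpha|z|} \geq e^{-\alpha_0 (K + \rho)}$ is uniformly bounded from below.

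The main technical obstacle is the curvature-induced error in the half-plane approximation near $z_0$: $\lambda(\Omega - y_1)$ deviates from $H^0_-(\nu(y_2), z_0)$ on a scale $\sim (\lambda / \kappa(y_2))^{1/2}$ in the rescaled frame, which can be smaller than the nominal radius $\rho$. I would handle this by either shrinking the ball adaptively to a ``good'' subset where the half-plane approximation is faithful, or by arguing that any curvature concentration at $y_2$ produces a separately controllable contribution to $J(y_2)$ (in the spirit of but much simpler than the rigidity estimates of Friesecke--James--M\"uller \cite{friesecke2002theorem} alluded to in the introduction). Once the pointwise bound is established, the weak estimate \eqref{eq:weak_estimate} on $Z_{K,\lambda}$ is an easy consequence: every pair $(y_1, y_2) \in Z_{K,\lambda}$ has $\nu(y_1) \cdot \nu(y_2) \leq 0$ and hence $|\nu(y_1) - \nu(y_2)|^2 \geq 2$, forcing $J(y_1) \geq c_0 := 2 c(K, \alpha_0)$. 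Bounding the fibre $\mathcal H^1(\{y_2 : (y_1, y_2) \in Z_{K,\lambda}\}) \leq P(\Omega)$ trivially and applying Chebyshev to the superlevel set $\{J \geq c_0\}$ gives
\begin{equation*}
\mathcal H^2(Z_{K,\lambda}) \leq P(\Omega) \cdot \mathcal H^1(\{J \geq c_0\}) \leq \frac{P(\Omega)}{c_0} \int_\Gamma J \intd \Hd^1 \leq C'' P(\Omega) F_{\lambda,\alpha}(\Omega).
\end{equation*}
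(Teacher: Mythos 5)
Your overall plan is right---reduce to a pointwise lower bound on the normal deviation in terms of the non-local contribution, then integrate in $t$ for the first estimate and apply Chebyshev plus a trivial fibre bound for the second. The final Chebyshev step for \eqref{eq:weak_estimate} is exactly what the paper does. But the pointwise bound as you state it, $J(y_1) \geq c(K,\alpha_0)\,|\nu(y_1)-\nu(y_2)|^2$, is \emph{false}, and the ``technical obstacle'' you flag is not a side issue---it is fatal for a bound involving only $J(y_1)$. Consider a set that coincides with its tangent half-plane at $y_1$ on all of $B_{2K/\lambda}(y_1)$ except for a tiny semicircular bump of radius $\eps \ll \lambda^{-1}$ at distance $K/\lambda$ from $y_1$, with $y_2$ taken at the top of the bump. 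Then after blowup $J(y_1) \lesssim (\eps\lambda)^2 \to 0$, yet $|\nu(y_1)-\nu(y_2)|^2 = 2$. Your claim that the symmetric difference $H^0_-(\nu(y_1)) \Delta \lambda(\Omega - y_1)$ ``essentially contains the bowtie'' $H^0_-(\nu(y_1)) \Delta H^0_-(\nu(y_2),z_0)$ is precisely what fails here: the bowtie is a large wedge but the actual symmetric difference is a tiny lens.

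The missing idea is the triangle inequality in the form
\begin{align*}
  \bigl|\chi_{\lambda H_-(y_1)} - \chi_{\lambda H_-(y_2)}\bigr|
  \leq
  \bigl|\chi_{\lambda H_-(y_1)} - \chi_{\lambda\Omega}\bigr|
  + \bigl|\chi_{\lambda H_-(y_2)} - \chi_{\lambda\Omega}\bigr| ,
\end{align*}
which makes $\Omega$ drop out exactly, with no curvature approximation whatsoever, and yields
\begin{align*}
  \int_{\R^2} \bigl|\chi_{\lambda H_-(y_1)} - \chi_{\lambda H_-(y_2)}\bigr|\,
  \min(\mu_1,\mu_2) \intd^2 x
  \leq
  \int_{\R^2} \bigl|\chi_{\lambda H_-(y_1)} - \chi_{\lambda\Omega}\bigr| \mu_1 \intd^2 x
  + \int_{\R^2} \bigl|\chi_{\lambda H_-(y_2)} - \chi_{\lambda\Omega}\bigr| \mu_2 \intd^2 x ,
\end{align*}
with $\mu_i(x) = |\nu(y_i)\cdot(x-\lambda y_i)|\,e^{-\alpha|x-\lambda y_i|}/|x-\lambda y_i|^2$. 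The left-hand side is now a purely geometric quantity involving only the two half-planes, and the weighted bowtie computation you sketch (estimating it from below by $c(K,\alpha_0)\sin^2\theta$, realized in the paper as the volume of an explicit three-dimensional cone intersected with a cylinder) goes through cleanly. The price is that you bound the sum $J(y_1)+J(y_2)$ rather than $J(y_1)$ alone, but this costs nothing after integration: specializing to $y_1 = \gamma(t)$, $y_2 = \gamma(t+(L\lambda)^{-1}s)$ and integrating in $t$, both terms give the same integral over $\Gamma$ by periodicity of the parametrization, and for \eqref{eq:weak_estimate} you integrate jointly over $Z_{K,\lambda}$ and bound each resulting term by $P(\Omega) \int_\Gamma J \intd\Hd^1$ as you indicated. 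With that modification your argument aligns with the paper's; without it, the pointwise bound has a counterexample and the proposed adaptive-radius or ``contribution to $J(y_2)$'' fixes have no clean implementation.
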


Using this information, we can prove the compactness statement for
sequences of single boundary curves.  As the $L^1$-topology disregards
sets with vanishing mass, we only have to consider sequences of curves
whose length does not converge to zero in the limit. We exclude
  the natural lack of compactness due to the translational symmetry of
  the problem by pinning one point on each curve.

\begin{lemma}\label{lemma:compactness_single_curve}
  Let $\lambda_n > 0$ and $\alpha_n > \frac{1}{\sqrt{2\pi}}$ be
    such that $\lambda_n \to \infty$ as $n \to \infty$ and such that
  $\sigma_n := \lambda_n^2 \left( 1- \frac{1}{2\pi \alpha_n^2}
  \right)$ satisfies
  \begin{align}
    \label{eq:siglim}
          \lim_{n\to \infty} \sigma_n = \sigma >0.
	\end{align}
	Let $(\Omega_{n}) \subset \mathcal{A}_\pi$ be a sequence of
        regular sets such that
	\begin{align}\label{eq:compactness_boundedness}
		\limsup_{n \to \infty} \lambda_n^2 F_{\lambda_n,
          \alpha_n} (\Omega_n) <\infty. 
	\end{align}
        Let $\gamma_{n} :[0,1] \to \R^2$ be a smooth Jordan boundary
        curve of $\Omega_{n}$, and assume that all $\gamma_{n}$ are
        parametrized with constant speed, with
	\begin{align} \label{eq:liminfLn}
		 \liminf_{n \to \infty} L(\gamma_{n})>0.
	\end{align}
        Then there exists a subsequence $(\gamma_{n_k})$ of
          $(\gamma_n)$ and
        $\gamma_\infty \in H^2(\Sph^1; \R^2)$ such that
        \begin{align}
          \gamma_{n_k} - \gamma_{n_k}(0) \to \gamma_\infty
          \qquad 
          \text{in} \quad
          H^1(\Sph^1;\R^2),
        \end{align}
        as $k \to \infty$.  In particular, we have
        $\lim_{k \to \infty} L(\gamma_{n_k}) =
        L(\gamma_\infty) >0$.  Furthermore, there exists a universal
        constant $C>0$ such that
        along a further subsequence (not relabeled) we have
	\begin{align}\label{eq:up_to_constants}
	  \begin{split}
            &\quad  \hat F_{\infty,\sigma}(\gamma_\infty)\\
            & \leq C \liminf_{k \to \infty} \Bigg(
            \sigma_{n_k} L(\gamma_{n_k})\\
            & \qquad + \lambda_{n_k}^2 \int_{\Gamma_{n_k}} \int_{
              H^0_- (\nu_{n_k}(y)) \Delta
              \lambda_{n_k}(\Omega_{n_k} - y) } \left| \nu_{n_k}(y)
              \cdot \frac{ z}{|z|} \right|
            \frac{e^{-\alpha_{n_k}|z|}}{|z|} \intd^2 z \intd \Hd^1(y)
            \Bigg).
	  \end{split}
	  \end{align}
\end{lemma}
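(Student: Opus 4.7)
The plan is to apply Lemma \ref{lemma:discrete} with $s=1$ together with a mollification of the boundary normals $\nu_n$ at the natural scale $h_n := (L(\gamma_n)\lambda_n)^{-1}$, converting the scale-$h_n$ discrete difference-quotient estimate into a uniform $H^1$-bound on the mollified normals. Rellich compactness then produces a limit normal $\nu^* \in H^1(\Sph^1;\R^2)$, and integration of $(\nu^*)^\perp$ yields the desired $\gamma_\infty \in H^2$. As a preliminary observation, bound \eqref{eq:FlaP} combined with $\sigma_n \to \sigma>0$ gives $\sigma_n P(\Omega_n) \leq \lambda_n^2 F_{\lambda_n,\alpha_n}(\Omega_n) \leq C$, so $L(\gamma_n) \leq P(\Omega_n)$ is uniformly bounded; together with $\liminf_n L(\gamma_n) > 0$, I would extract a subsequence (not relabelled) with $L(\gamma_n) \to L_\infty \in (0,\infty)$. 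Note also that $\alpha_n \to 1/\sqrt{2\pi}$ is bounded and $h_n \to 0$.

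Next I would define the mollified normals $\tilde\nu_n(t) := h_n^{-1}\int_t^{t+h_n} \nu_n(s)\intd s$, so that $|\tilde\nu_n| \leq 1$ and $\tilde\nu_n'(t) = h_n^{-1}(\nu_n(t+h_n) - \nu_n(t))$. Writing $N_n$ for the non-local integral over $\Gamma_n$ appearing in \eqref{eq:up_to_constants}, Lemma \ref{lemma:discrete} applied with $K = s = 1$ yields
\begin{equation*}
  \|\tilde\nu_n'\|_{L^2}^2 = \frac{1}{h_n^2}\int_0^1|\nu_n(t+h_n)-\nu_n(t)|^2\intd t \leq C L(\gamma_n)\lambda_n^2 N_n \leq C' \lambda_n^2 F_{\lambda_n,\alpha_n}(\Omega_n) \leq C'',
\end{equation*}
so $\tilde\nu_n$ is uniformly bounded in $H^1(\Sph^1;\R^2)$. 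By Rellich--Kondrachov, along a further subsequence, $\tilde\nu_n \to \nu^*$ strongly in $L^2$ and weakly in $H^1$ for some $\nu^* \in H^1(\Sph^1;\R^2)$. Applying Lemma \ref{lemma:discrete} for $s \in [0,1]$ together with Jensen's inequality gives $\|\tilde\nu_n - \nu_n\|_{L^2}^2 \leq h_n^{-1}\int_0^{h_n}\|\nu_n(\cdot+r)-\nu_n\|_{L^2}^2\intd r \leq C/(L(\gamma_n)\lambda_n^2) \to 0$, so $\nu_n \to \nu^*$ in $L^2$ as well, with $|\nu^*| = 1$ a.e.

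Finally I would define $\gamma_\infty(t) := L_\infty\int_0^t (\nu^*(s))^\perp\intd s$, which is periodic since $\int_0^1\nu^* = \lim_n\int_0^1\nu_n = 0$ and which lies in $H^2(\Sph^1;\R^2)$ with constant speed $L_\infty = L(\gamma_\infty)$ and outer normal exactly $\nu^*$. The $L^2$-convergence $\nu_n\to\nu^*$ combined with $L(\gamma_n) \to L_\infty$ gives $\gamma_n' = L(\gamma_n)\nu_n^\perp \to \gamma_\infty'$ in $L^2$, and integration with the pinning $\gamma_n(0)$ gives uniform, and hence $L^2$, convergence of $\gamma_n - \gamma_n(0) \to \gamma_\infty$, establishing the claimed $H^1$-convergence. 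Using $|\kappa_\infty|L_\infty = |\nu_\infty'|$ a.e.\ and $L^2$-weak lower semicontinuity,
\begin{equation*}
  \int_0^1\kappa_\infty^2|\gamma_\infty'|\intd t = \frac{\|\nu_\infty'\|_{L^2}^2}{L_\infty} \leq \frac{1}{L_\infty}\liminf_n \|\tilde\nu_n'\|_{L^2}^2 \leq C\liminf_n \lambda_n^2 N_n,
\end{equation*}
and combining with $\sigma L_\infty = \lim_n\sigma_n L(\gamma_n)$ yields the claimed bound on $\hat F_{\infty,\sigma}(\gamma_\infty)$. The hard part is the compactness step: the discrete scale-$h_n$ estimate alone does not imply a uniform $H^1$-bound on $\nu_n$ itself (high-frequency oscillations below scale $h_n$ are invisible to it), but mollifying at exactly the scale $h_n$ suffices to upgrade it into weak $H^1$-compactness of the smoothed sequence while keeping $\tilde\nu_n$ close to $\nu_n$ in $L^2$.
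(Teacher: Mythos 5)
Your proof follows essentially the same route as the paper's: mollify the normals at the natural scale $h_n = (L(\gamma_n)\lambda_n)^{-1}$, use Lemma \ref{lemma:discrete} to convert the scale-$h_n$ difference-quotient control into a uniform $H^1$-bound on $\tilde\nu_n$ together with $\|\tilde\nu_n-\nu_n\|_{L^2}\to 0$, then pass to the limit by Rellich and lower semicontinuity. The only cosmetic difference is that you use a sliding box average (whose weak derivative is \emph{exactly} the $h_n$-difference quotient, so the $H^1$-estimate is immediate) whereas the paper convolves with a smooth mollifier $\phi_\delta$ and controls $\tilde\nu_n' = \phi_\delta' * \nu_n$ via a Cauchy--Schwarz step exploiting $\int\phi_\delta' = 0$; both yield the same estimates, and the remaining reconstruction of $\gamma_\infty$ and the energy bound are identical.
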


\begin{proof}[Proof of Lemma \ref{lemma:discrete}]
  For $x\in \R^2$ and $i=1,2$, we abbreviate
	\begin{align}
          g_i(x)& := \nu(y_i) \cdot (x-\lambda y_i),\\
          \mu_i (x) &:= \left|g_i(x) \right|
                      \frac{e^{-\alpha|x-\lambda
                      y_i|}}{|x-\lambda y_i|^2}.
	\end{align}
	\textit{Step 1: We claim that for all $y_1, y_2 \in \partial
          \Omega$ with $\lambda | y_1- y_2| \leq K$, we have
	\begin{align}
	  \begin{split}
            |\nu(y_1 ) - \nu(y_2)|^2 & \leq C \int_{\R^2} \left|
              \chi_{\lambda H_-(y_1)} - \chi_{\lambda
                \Omega}
            \right| \mu_1 \intd^2 x \\
            & \quad + C \int_{\R^2} \left| \chi_{\lambda H_-(y_2)} -
              \chi_{\lambda \Omega} \right| \mu_2 \intd^2 x,
	  \end{split}
	\end{align}
        for some $C > 0$ depending only on $K$ and $\alpha_0$.}
      
      With the goal of comparing tangent spaces at $y_1$ and $y_2$, we
      have by the triangle inequality
	\begin{align}\label{eq:compactness_triangle}
	   \begin{split}
             & \int_{\R^2} \left| \chi_{\lambda H_-\left(y_1
                 \right)} - \chi_{\lambda \Omega} \right| \mu_1
             \intd^2 x + \int_{\R^2} \left| \chi_{\lambda H_-(y_2)}
               -
               \chi_{\lambda \Omega} \right| \mu_2 \intd^2 x\\
             & \geq \int_{\R^2} \left| \chi_{\lambda
                   H_-\left(y_1\right)} - \chi_{\lambda
                   H_-(y_2)} \right|
             \min\left(\mu_1(x),\mu_2\left(x\right)\right) \intd^2 x.
	  \end{split}
	\end{align}
	Let $\bar y := \frac{\lambda (y_1 + y_2)}{2}$.  By the
        assumption $\lambda |y_1 -y_2| \leq K$, for $i=1,2$ we
        have
	\begin{align}\label{eq:distance_blowup_points}
          |\lambda y_i - \bar y| \leq \frac{K}{2}.
	\end{align}
	Therefore, for all $x \in B_{ K}(\bar y)$ we obtain
	\begin{align}
          \max\left\{ | x- \lambda  y_1| , |x-\lambda y_2|  \right\}
          &            \leq \frac{3}{2}  K 
	\end{align}
	so that we have
	\begin{align}\label{eq:compactness_kernel_bound}
          \min\left(\frac{e^{-\alpha|x-\lambda
          y_1|}}{|x-\lambda y_1|^2},  \frac{e^{-\alpha|x-\lambda
          y_2|}}{|x-\lambda y_2|^2} \right) \geq \frac{4 e^{-\frac32
          \alpha_0 K}}{ 9 K^2}. 
	\end{align} 
	Together with \eqref{eq:compactness_triangle}, we arrive at
	\begin{align}\label{eq:compactness_terrible}
	   \begin{split}
             & \int_{\R^2} \left| \chi_{\lambda H_-\left(y_1 \right)}
               - \chi_{\lambda \Omega} \right| \mu_1 \intd^2 x +
             \int_{\R^2} \left| \chi_{\lambda H_-(y_2)} -
               \chi_{\lambda \Omega} \right| \mu_2 \intd^2 x\\
             & \geq C^{-1} \int_{\left(  \lambda H_- \left(y_1
                   \right) \Delta \lambda H_-(y_2) \right) \cap B_{
                 K}(\bar y)} \min\left\{|g_1(x)|,|g_2(x)| \right\}
             \intd^2 x,
	  \end{split}
	\end{align}
        for some $C > 0$ depending only on $K$ and $\alpha_0$.
	
	\begin{figure}
		\centering
	\includegraphics{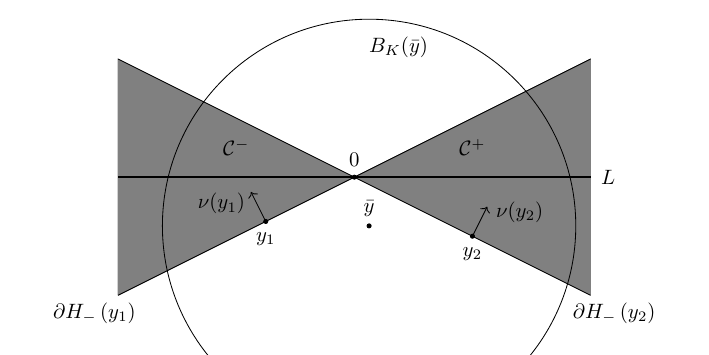}
		\caption{\label{fig:cones} Sketch of the cones
                  $\mathcal{C}^\pm$, the line $L$, the points $y_1$,
                  $y_2$, $\bar y$, the normals $\nu_1$ and $\nu_2$,
                  and the ball $B_K(\bar y)$.}
	\end{figure}
	
	We now interpret the integral on the right-hand side if
        \eqref{eq:compactness_terrible} as the volume of a
        three-dimensional body, aiming to estimate
        $|\nu(y_1) - \nu(y_2)|^2$ from above.  Therefore, we may
        assume that $\nu(y_1) \neq \nu(y_2)$ and, without loss of
        generality, that $\partial H_-(y_1 )$ and $\partial H_-(y_2)$
        intersect at the origin.  Thus there exists a closed cone
        $\mathcal{C}^+ \subset \R^2$ with vertex at $0$ and half-angle
        $\theta \in [0,\frac{\pi}{2})$, see Figure \ref{fig:cones},
        such that
	\begin{align}
          \mathcal{C}^+ \cup \mathcal{C}^-=
          \overline{\lambda H_-( y_1 ) \, \Delta \, \lambda
          H_-(y_2)}, 
	\end{align}
	where $\mathcal{C}^- := -\mathcal{C}^+$.
	Now recall that by its definition $|\lambda^{-1} g_i(x)|$
          is the distance from the point $\lambda^{-1} x$ to
          $\partial H_-(y_i)$. Hence the set
	\begin{align}
          L := \{ x \in \mathcal{C}^+ \cup \mathcal{C}^-: |g_1(x)| = |g_2(x)| \}
	\end{align}
	defines a line that bisects $\mathcal{C}^\pm$, that is, it
        separates $\mathcal{C}^+$ and $\mathcal{C}^-$ into two
        respective sub-cones with apertures
        $\theta \in [0,\frac{\pi}{2})$.
	
	Let $x \in L$.  Then for $i=1,2$, we have
        $x - g_i(x) \nu(y_i) \in \partial H_-(y_i)$ and
	\begin{align}\label{eq:expr_graph}
          | g_i(x) | = |x| \sin \theta
	\end{align}
	Let $\tau \in L$ be such that $|\tau| =1$ and
        $\rho \tau \in \mathcal{C}^+ \subset \R^2$ for all
        $\rho >0$.  We define the three-dimensional sets
	\begin{align}
          \tilde L^\pm
          & := \left\{\left( \pm  \rho  \tau ,
            \rho  \sin \theta \right)
            \in \R^3 :
            \rho  \in (0,\infty)  \right\} ,\\ 
          \mathcal{\tilde C}^\pm
          & := \operatorname{conv} \left(
            \left(\mathcal{C}^\pm\times\{0\} \right) \cup \tilde L^\pm
            \right). 
	\end{align}
	In particular, $ \mathcal{\tilde C}^\pm$ are two
        three-dimensional cones, see an illustration in Figure
          \ref{fig:3d_cone}.

	By linearity of $g_i$ for $i=1,2$, the definition of $L$, and
        the identity \eqref{eq:expr_graph}, we can interpret the
        integral on the right hand side of
        \eqref{eq:compactness_terrible} as
	\begin{align}\label{eq:compactness_geometry}
	  \begin{split}
            & \quad \int_{\left( (\lambda H_-\left(y_1 \right) )
                \Delta (\lambda H_-(y_2) ) \right) \cap B_{
                K}(\bar
              y)}  \min\left\{|g_1(x)|,|g_2(x)| \right\} \intd^2 x \\
            &= \left|\left(\mathcal{\tilde C}^+ \cup \mathcal{\tilde
                  C}^-\right) \cap \left( B_{ K}(\bar y) \times \R
              \right) \right|.
	  \end{split}
	\end{align}
	As a result of estimate \eqref{eq:distance_blowup_points}, we
        have $B_{\frac{K}{2}}(\lambda y_i) \subset B_{ K}(\bar y)$ for
        $i=1,2$, so that
	\begin{align}
	  \begin{split}
            \left|\left(\mathcal{\tilde C}^+ \cup \mathcal{\tilde
                  C}^-\right) \cap \left( B_{ K}(\bar y) \times
                \R \right) \right| \geq 
              \left|\left(\mathcal{\tilde C}^+ \cup \mathcal{\tilde
                    C}^-\right) \cap \left( B_{ \frac{K}{2}}(\lambda y_i)
                  \times \R \right) \right|.
	  \end{split}
	\end{align}
	Without loss of generality, we may assume
          $y_2 \in \mathcal{C}^+$, as in Figure \ref{fig:cones}, so
        that
	\begin{align}\label{eq:compactness_symmetry}
	  \begin{split}
            \left|\left(\mathcal{\tilde C}^+ \cup \mathcal{\tilde
                  C}^-\right) \cap \left( B_{ K}(\bar y) \times \R
              \right) \right| \geq  \left|\mathcal{\tilde C}^+
                \cap \left( B_{ \frac{K}{2}}(\lambda y_2) \times \R \right)
              \right|.
	  \end{split}
	\end{align}
	In turn, by monotonicity of the right-hand side of
        \eqref{eq:compactness_symmetry} with respect to sliding the
        ball center along $\partial H_-(y_2)$, we have
	\begin{align}
          \left|\mathcal{\tilde C}^+
          \cap \left( B_{ \frac{K}{2}}(\lambda y_2) \times \R \right)
          \right| \geq \left|\mathcal{\tilde C}^+
          \cap \left( B_{ \frac{K}{2}}(0) \times \R \right)
          \right| .
	\end{align}
      
	\begin{figure}
		\centering

		\includegraphics{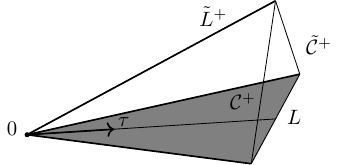}
		\caption{\label{fig:3d_cone} Sketch of the
                  three-dimensional cone $\mathcal{\tilde C}^+$.}
	\end{figure}

        As $\mathcal{\tilde C}^+$ is a cone with opening half-angle
        $\theta$ in the horizontal direction and aperture $\theta$ in
        the vertical direction, we have
	\begin{align}\label{eq:minimizing_point}
          \left|\mathcal{\tilde C}^+ \cap  \left( B_{ \frac{K}{2}}(0)
          \times \R \right)   \right| \geq C K^3 \sin^2
         \theta, 
	\end{align}
        for some $C > 0$ universal.  Finally, by the observation
        that
	\begin{align}
          |\nu(y_1)- \nu(y_2) |= 2 \sin\theta,
	\end{align}
	and estimates
        \eqref{eq:compactness_terrible} and
        \eqref{eq:compactness_geometry}--\eqref{eq:minimizing_point},
        we obtain
	\begin{align}
	  \begin{split}
            |\nu(y_1) - \nu(y_2)|^2 & \leq C \int_{\R^2} \left|
              \chi_{\lambda H_-(y_1)} - \chi_{\lambda \Omega}
            \right| \mu_1 \intd^2 x \\
            & \quad + C \int_{\R^2} \left| \chi_{\lambda
                H_-(y_2)} - \chi_{\lambda \Omega} \right| \mu_2
            \intd^2 x,
	  \end{split}
	\end{align}
        for some $C > 0$ depending only on $K$ and $\alpha_0$,
        proving the claim.

          \textit{Step 2: Estimate the normals along a curve.}  As
          $\gamma$ is parametrized by constant speed, for all
          $t \in [0,1]$ and $s \in [-K,K]$ we have
	\begin{align}
          \lambda \left|\gamma(t) - \gamma\left(t+
          (L(\gamma)\lambda)^{-1} s \right) \right| \leq K. 
	\end{align}
	The first estimate in \eqref{eq:norms_boundedness} then
        follows by taking $y_1 = \gamma(t)$ and
        $y_2 = \gamma\left(t+ (L(\gamma)\lambda)^{-1} s \right)$ in
        Step 1 and integrating in $t$, while the second one is
          obtained with the help of Lemma
          \ref{lem:energy_on_boundary}.

        \textit{Step 3: Estimate mismatched normals.}  For
        $(y_1,y_2) \in Z_{K,\lambda}$, Step 1 implies
	\begin{align}
	  \begin{split}
            2& \leq |\nu(y_1) - \nu(y_2)|^2\\
            & \leq C \int_{\R^2} \left| \chi_{\lambda H_-(y_1)} -
              \chi_{\lambda \Omega} \right| \mu_1
            \intd^2 x  \\
            & \quad + C \int_{\R^2} \left| \chi_{\lambda
                H_-(y_2)} - \chi_{\lambda \Omega} \right| \mu_2
            \intd^2 x.
	  \end{split}
	\end{align}
	Integrating jointly in $y_1$ and $y_2$ over
          $Z_{K,\lambda} \subset \Gamma \times \Gamma$, as in
          Step 2 we obtain
	\begin{align}
          \mathcal{H}^2(Z_{K,\lambda})  \leq  C' P(\Omega)
          F_{\lambda,\alpha}(\Omega), 
	\end{align}
	for some $C' > 0$ depending only on $K$ and
          $\alpha_0$. This concludes the proof.
\end{proof}

\begin{proof}[Proof of Lemma \ref{lemma:compactness_single_curve}]
  Throughout the following, we will never relabel the sequences after
  passing to subsequences. Without loss of generality, we may also
  assume that $\gamma_n(0) = 0$ for all $n \in \mathbb N$. We
    recall that \eqref{eq:siglim} implies that
    $\alpha_n \to {1 \over \sqrt{2 \pi}}$ as $n \to \infty$.
  
  \textit{Step 1: Establish the limit behavior of the normals.}
  We begin by observing that by
    \eqref{eq:compactness_boundedness}, \eqref{eq:liminfLn} and Lemma
    \ref{lem:energy_on_boundary} the limit
  $L_\infty:= \lim_{n \to \infty} L(\gamma_{n}) \in (0,\infty)$ exists
  along a subsequence.
  
  Let $\phi : \R \to \R$ be the standard mollifier and let
  $\phi_\delta = \frac{1}{\delta} \phi \left( \frac{\cdot}{\delta}
  \right)$ be an approximation for the Dirac delta in one dimension
  with support in $(-\delta, \delta)$ for $\delta \to 0$.  Consider
  the maps
  $\tilde \nu_{n} := \phi_{(L(\gamma_{n}) {\lambda_n})^{-1}} \ast
  \nu_{n}$ as 1-periodic convolutions, where $\nu_n$ is the outward
  normal to $\gamma_n$ defined in \eqref{eq:chosen_normal}. In
  particular, we have $|\tilde \nu_n| \leq 1$. Note that we are
  mimicking convolution in arc-length coordinates on the fixed domain
  $[0,1]$.  For $L(\gamma_{{n}}) \lambda_{n} > 2$, which holds for $n$
  large enough due to $L_\infty>0$, we may apply the Cauchy-Schwarz
  inequality to split off $\phi$ and get
          \begin{align}\label{eq:rigidity_close}
	  \begin{split}
            \int_0^1 |\tilde \nu_{n} - \nu_{n}|^2 \intd t & = \int_0^1
            \left|\int_{-\frac{1}{2}}^{\frac{1}{2}}
              \phi_{(L(\gamma_{n}){\lambda_n})^{-1}} (\tau) (\nu_{n}(t
              + \tau) - \nu_{n}(t)) \intd \tau \right|^2 \intd t
            \\
            & \leq \int_{-\frac{1}{2}}^{\frac{1}{2}}
            \phi_{(L(\gamma_{n}) {\lambda_n})^{-1}} (\tau) \int_0^1
            \left|\nu_{\lambda_n}(t + \tau) -
              \nu_{\lambda_n}(t) \right|^2 \intd t \intd \tau \\
            & = \int_{-1}^{1} \phi (s) \int_0^1
              \left|\nu_{\lambda_n}(t + (L(\gamma_n) \lambda_n)^{-1}
                s) - \nu_{\lambda_n}(t) \right|^2 \intd t \intd s .
	  \end{split}
	\end{align}
	Applying the $L^\infty$-type estimate
          \eqref{eq:norms_boundedness} to the last term in
          \eqref{eq:rigidity_close} for $K= 1$, we get
        \begin{align}
	  \begin{split}
            & \quad L(\gamma_{n})  \int_0^1
            |\tilde
            \nu_{n} - \nu_{n}|^2 \intd t \\
            & \leq C \int_{\Gamma_{n}} \int_{
              H^0_-(\nu_{n}(y)) \Delta{\lambda_n}(\Omega - y) }
            \left| \nu_{n}(y) \cdot \frac{ z}{|z|} \right|
            \frac{e^{-\alpha |z|}}{|z|} \intd^2 z \intd \Hd^1(y),
	  \end{split}
	\end{align}
	for some $C > 0$ universal and all $n$ large enough. In
        particular, by Lemma \ref{lem:energy_on_boundary} this ensures
        tha
	\begin{align}\label{eq:rigity_convergence}
		\tilde \nu_{n} - \nu_{n} \to 0
	\end{align}
	in $L^2 (\Sph^1; \R^2)$ as $n \to \infty$.
	
        Similarly, due to
        $\int_{-\frac{1}{2}}^{\frac{1}{2}}
        \phi_{(L(\gamma_{n}){\lambda_n})^{-1}}' \intd t =0$ for
        $n \in \N $ sufficiently large and the Cauchy--Schwarz
        inequality to split off $|\phi '|$, we obtain
	\begin{align}\label{eq:rigidity_derivative}
	  \begin{split}
            \int_0^1 |\tilde \nu_{n}'|^2 \intd t & = \int_0^1
            \left|\int_{-\frac{1}{2}}^{\frac{1}{2}}
              \phi_{(L(\gamma_{n}) {\lambda_n})^{-1}} '(\tau)
              (\nu_{n}(t + \tau) - \nu_{n}(t)) \intd \tau \right|^2
            \intd t
            \\
            & \leq \| \phi_{(L(\gamma_{n}){\lambda_n})^{-1}}'\|_{L^1}
            \int_{-\frac{1}{2}}^{\frac{1}{2}} \left|
              \phi_{(L(\gamma_{n}) {\lambda_n})^{-1}} '(\tau) \right|
            \int_0^1 \left|\nu_{n}(t + \tau) - \nu_{n}(t) \right|^2
            \intd t \intd \tau.
	  \end{split}
	\end{align}
	Combining the fact that
        $\|\phi'_{(L(\gamma_{n}){\lambda_n})^{-1}}\|_{L^1} \leq
        CL(\gamma_{n}) {\lambda_n}$ for some $C > 0$ universal with
        the $L^\infty$-type estimate \eqref{eq:norms_boundedness}
        for $K= 1$, we furthermore get
	\begin{align}\label{eq:derivative_normal_bound}
	  \begin{split}
            & \quad \frac{1}{L (\gamma_{n})} \int_0^1 |\tilde
            \nu_{n}'|^2 \intd t \\ 
            & \leq C \lambda_n^2 \int_{\Gamma_{n}} \int_{
              H^0_-(\nu_{n}(y)) \Delta \lambda(\Omega_n - y) }
            \left| \nu_{n}(y) \cdot \frac{ z}{|z|} \right|
            \frac{e^{-\alpha |z|}}{|z|} \intd^2 z \intd \Hd^1(y),
	  \end{split}
	\end{align}
        again, for some $C > 0$ universal and all $n$ large
          enough.  In particular, by
          \eqref{eq:norms_boundedness},
          \eqref{eq:compactness_boundedness} and Lemma
          \ref{lem:energy_on_boundary} we have that $\tilde \nu'_{n}$
        is uniformly bounded in $L^2(\Sph^1; \R^2)$, and so by
          uniform boundedness of $|\tilde \nu_n|$ there exists
        $\tilde \nu_\infty \in H^{1}(\Sph^1;\R^2)$ such that upon
        extraction of a subsequence
        $\tilde \nu_{n} \rightharpoonup \tilde \nu_\infty$ in
        $H^1(\Sph^1; \R^2)$ as $n \to \infty$. In turn, by the
        Rellich-Kondrachov theorem we get
        $\tilde \nu_{n} \to \tilde \nu_\infty$ strongly in
        $L^2(\Sph^1; \R^2)$ as $n \to \infty$.  Hence, due to
        convergence \eqref{eq:rigity_convergence}, we also get
        \begin{align} \label{eq:nunnuinfty}
          \nu_{n} \to \tilde \nu_\infty \qquad \text{strongly in} \quad
          L^2(\Sph^1; \R^2). 
        \end{align}
        Furthermore, since
        $|\nu_n| = 1$, we obtain that $|\tilde \nu_\infty| =1$ as well
        (recall that
        $\tilde \nu_\infty \in C^\frac{1}{2}(\Sph^1; \R^2)$ by the
        corresponding Sobolev embedding).

        \textit{Step 2: Construct a limit curve
          $\gamma_\infty \in H^2(\Sph^1; \R^2)$.}  By
        \eqref{eq:nunnuinfty}, \eqref{eq:chosen_normal} and the fact
        that $|\gamma_n'| = L(\gamma_n)$ we have
          \begin{align} \label{eq:gammanpL} \gamma_{n}' =
            L(\gamma_n) \nu_{n}^\perp \to L_\infty \tilde
            \nu_\infty^\perp \qquad \text{strongly in} \quad
          L^2(\Sph^1; \R^2).
        \end{align}
        At the same time, since we assumed without loss of generality
        that $\gamma_n(0) = 0$, upon extraction of a subsequence we
        get $\gamma_{n} \rightharpoonup \gamma_\infty$ weakly in
        $H^1(\Sph^1; \R^2)$ for some
        $\gamma_\infty \in H^1(\Sph^1; \R^2)$ with
          $\gamma_\infty(0) = 0$ as $n \to \infty$. In particular, by
        \eqref{eq:gammanpL} we have
        \begin{align} \label{eq:gammainftynuperp}
          \gamma_\infty' = L_\infty \tilde \nu_\infty^\perp,
        \end{align}
        and $\gamma_n \to \gamma_\infty$ strongly in
        $H^1(\Sph^1; \R^2)$.  From the strong convergence, it follows
        that
        $L_\infty = \lim_{n \to \infty} L(\gamma_{n}) =
        L(\gamma_\infty)$. From \eqref{eq:gammainftynuperp} and the
        fact that $|\tilde \nu_\infty| = 1$ we thus obtain that
        $|\gamma_\infty'| = L(\gamma_\infty)$, i.e., that
        $\gamma_\infty$ is a closed curve parametrized with constant
        speed. Finally, we conclude that
        $\gamma_\infty \in H^2(\Sph^2;\R^2)$ from
        \eqref{eq:gammainftynuperp} and the fact that
        $\tilde \nu_\infty \in H^1(\Sph^1; \R^2)$.
	
        \textit{Step 3: Estimate the elastica energy up to constants.}
        Observe that by the identities \eqref{eq:kappai} and
        \eqref{eq:gammainftynuperp}, together with the constant
          speed parametrization, we have
      \begin{align}
      	|\kappa_\infty| = \frac{|\gamma_\infty''|}{
        L^2(\gamma_\infty)} = \frac{|\tilde \nu_\infty'|}{
        L(\gamma_\infty)} \in 
        L^2(\Sph^2).
      \end{align}
      By $\lim_{n \to \infty} L(\gamma_n) = L(\gamma_\infty)$, weak
      convergence of $\tilde \nu_n'$, and lower semi-continuity of the
      norms, we therefore obtain (recall Definition
        \ref{def:H2curves})
	\begin{align}
	  \begin{split}
            \hat{F}_{\infty,\sigma}(\gamma_\infty) & =
            L(\gamma_\infty) \left(\sigma + \frac{\pi}{2}
              \int_0^1 \kappa_\infty^ 2 \intd t\right)
              \leq 
              \liminf_{n \to \infty} \left( \sigma_n L(\gamma_n) +
                \frac{\pi}{ 2 L(\gamma_n)} \int_0^1 |\tilde \nu'_n|^2
                \intd t \right).
             \end{split}
           \end{align}      
           Estimate \eqref{eq:derivative_normal_bound} then yields
           \eqref{eq:up_to_constants}, concluding the proof.
\end{proof}

\subsection{Identifying the asymptotic system of boundary curves}
	
The main issues in compactness for all boundary curves are,
first, proving that there may only exist finitely many limit
curves and, second, that their limits are a system of boundary curves
to the limiting set.

For the first part, we use the estimate \eqref{eq:up_to_constants}
together with the Gauss-Bonnet theorem for closed curves to show that
the limit energy of curves blows up as their length approaches zero.
The isoperimetric inequality ensures that curves whose lengths vanish
in the limit $\lambda_n \to \infty$ do not contribute to the
$L^1$ $\Gamma$-limit. Combined, these two facts also ensure that
  not all boundary curves would vanish in the limit.
  
  \begin{lemma}\label{lemma:finite}
    Let $\lambda_n > 0$ and $\alpha_n >\frac{1}{\sqrt{2\pi}}$ be
      such that $\lambda_n \to \infty$ as $n \to \infty$ and such
      that
    $\sigma_n = \lambda_n^2 \left( 1- \frac{1}{2\pi \alpha_n^2}
    \right)$ satisfies
	\begin{align}
         \lim_{n\to \infty} \sigma_n = \sigma >0.
	\end{align}
	Let $(\Omega_{n}) \subset \mathcal{A}_\pi$ be a sequence of
        regular sets such that
        \begin{align}
          \label{eq:Flam2M}
                  M := \limsup_{n \to \infty} \lambda_n^2 F_{\lambda_n,
                  \alpha_n} (\Omega_{n}) <\infty,
                \end{align}
                and for $n \in \N$, let $N_n \in \N$ be the number of
                constant speed boundary curves
                $\{\gamma_{n,i}\}_{i=1}^{N_n}$ of the
                set $\Omega_n$, enumerated by
                decreasing length.  Finally, let $N \geq 0$ be the
                  number of non-vanishing boundary curves as
                  $n \to \infty$:
	\begin{align}\label{eq:N}
          N:= \sup \left( \{0\} \cup \left\{ i \in \N: \limsup_{n \to \infty} L(
          \gamma_{n,i}) >0 \right\} \right),
	\end{align}
	with the convention that $L(\gamma_{n,i}) = 0$ if $i > N_n$.
        Then the following holds:
        \begin{enumerate}[i)]
        \item For the non-vanishing curves, we have
          \begin{align}\label{eq:def_n_max}
            1 \leq N\leq C M\sigma^{-\frac{1}{2}},
          \end{align}
          for some $C > 0$ universal. 
                
        \item For the vanishing curves, we have
	\begin{align}\label{eq:vanishing_curves}
		\lim_{n \to \infty} \sup_{i> N} L(\gamma_{n,i}) = 0
	\end{align}
	and
	\begin{align}\label{eq:convergence_short_curves}
          \lim_{n\to \infty} \sum_{i > N} | \operatorname{int}(\gamma_{n,i})|= 0.
	\end{align}
      \end{enumerate}
      
    \end{lemma}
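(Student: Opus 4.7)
My plan is to combine Lemma \ref{lemma:compactness_single_curve} with a universal Fenchel-type lower bound on $\hat F_{\infty,\sigma}(\gamma)$ for closed $H^2$-curves. The starting point is that every regular closed curve $\gamma \in H^2(\Sph^1;\R^2)$ satisfies $\int_0^1 |\kappa_\gamma|\,|\gamma'|\,\intd t \ge 2\pi$ by Fenchel's theorem (extended from the classical $C^2$ version by density), whence the Cauchy--Schwarz inequality yields $\int_0^1 \kappa_\gamma^2|\gamma'|\,\intd t \ge 4\pi^2/L(\gamma)$ and the AM--GM inequality gives the uniform per-curve bound
\[
\hat F_{\infty,\sigma}(\gamma) \ge \sigma L(\gamma) + \frac{2\pi^3}{L(\gamma)} \ge 2\sqrt{2\pi^3\,\sigma}.
\]

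For the upper bound on $N$ in part (i), I would use a finite-truncation argument. Fix any finite $N_0 \le N$. By definition of $N$, $\limsup_n L(\gamma_{n,i})>0$ for each $i\in\{1,\ldots,N_0\}$, so after extracting a common subsequence $(n_k)$ Lemma \ref{lemma:compactness_single_curve} applies to each of these $N_0$ curves and produces limits $\gamma_\infty^i \in H^2(\Sph^1;\R^2)$ each satisfying \eqref{eq:up_to_constants}. Summing these inequalities over $i$, using superadditivity of $\liminf$ for finite non-negative sums, and observing via \eqref{eq:representation_lower_bound} that the sum of the right-hand sides is controlled by $C\lambda_{n_k}^2 F_{\lambda_{n_k},\alpha_{n_k}}(\Omega_{n_k})$, I obtain $\sum_{i=1}^{N_0}\hat F_{\infty,\sigma}(\gamma_\infty^i) \le CM$. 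Combined with the per-curve bound $\ge 2\sqrt{2\pi^3\sigma}$, this forces $N_0 \le C'M\sigma^{-1/2}$ uniformly in $N_0\le N$, so $N$ itself is finite and obeys the same estimate. For the reverse bound $N\ge 1$, I would apply the isoperimetric inequality component by component combined with Cauchy--Schwarz to get $4\pi^2 = 4\pi|\Omega_n| \le L(\gamma_{n,1})\,P(\Omega_n)$, and since \eqref{eq:FlaP} together with \eqref{eq:Flam2M} yields $P(\Omega_n) \le M/\sigma_n$ bounded, the longest curve $\gamma_{n,1}$ must be bounded away from zero in length.

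Part (ii) is then essentially immediate. The decreasing ordering of the boundary curves together with the definition of $N$ gives $\sup_{i>N} L(\gamma_{n,i}) = L(\gamma_{n,N+1}) \to 0$, proving \eqref{eq:vanishing_curves}. For \eqref{eq:convergence_short_curves}, the isoperimetric inequality $|\operatorname{int}(\gamma_{n,i})|\le L(\gamma_{n,i})^2/(4\pi)$ and the uniform perimeter bound give
\[
\sum_{i>N}|\operatorname{int}(\gamma_{n,i})| \le \frac{\sup_{i>N} L(\gamma_{n,i})}{4\pi}\,P(\Omega_n) \longrightarrow 0.
\]

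The main technical hurdle I anticipate is managing subsequences when $N$ is not known a priori to be finite; this is precisely what the finite-truncation trick handles, since it only ever requires extracting a common subsequence for finitely many curves at a time and then propagates the resulting bound uniformly in $N_0$. A minor auxiliary point is the extension of Fenchel's theorem from smooth to $H^2$-regular closed curves, which is a routine density argument.
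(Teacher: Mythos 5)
Your proposal is correct and follows essentially the same strategy as the paper: repeatedly invoking Lemma \ref{lemma:compactness_single_curve} on the first $N_0$ curves after extracting a common subsequence, summing the estimate \eqref{eq:up_to_constants} using superadditivity of $\liminf$ and the representation \eqref{eq:representation_lower_bound}, and then combining Fenchel's theorem, Jensen/Cauchy--Schwarz, and Young/AM--GM to get the uniform per-curve bound $\hat F_{\infty,\sigma}(\gamma)\ge 2\sqrt{2\pi^3\sigma}$, with part (ii) then following from the decreasing ordering and the isoperimetric inequality exactly as in the paper. The only genuine (though small) deviation is the argument for $N\geq 1$: the paper derives it \emph{a posteriori} by noting that $N=0$ would, via \eqref{eq:convergence_short_curves}, force $|\Omega_n|\to 0$ and contradict $\Omega_n\in\mathcal A_\pi$, whereas you argue directly that the longest curve must stay long via $4\pi^2 = 4\pi|\Omega_n|\le \sum_i L(\gamma_{n,i})^2 \le L(\gamma_{n,1})P(\Omega_n)$ together with the uniform perimeter bound from \eqref{eq:FlaP}. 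Both are correct and equally short; yours is slightly more self-contained since it does not rely on first establishing \eqref{eq:convergence_short_curves}.
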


    We remark that for sequences of sets whose energy is
      comparable (within a universal constant) to that of the
      minimizers we have
      \begin{align}
        2 \pi \sigma \leq M \leq C (\sigma  + \sqrt{\sigma}),
      \end{align}
      for some $C > 0$ universal: The lower bound is due to the
      isoperimetric inequality and the upper bound is obtained by
      testing with either a disk for $\sigma \geq 1$ or an annulus for
      $\sigma < 1$, see estimate \eqref{eq:annuli_estimate}.
      Therefore, for such sets the upper estimate in
      \eqref{eq:def_n_max} translates into
      \begin{align}
        \label{eq:NCMsig}
        N \leq C \left(1 + \sqrt{\sigma} \right), 
      \end{align}
      for some $C > 0$ universal. In particular,
      counterintuitively, this estimate shows that the number of
      non-vanishing boundary curves in a sequence of sets under
      consideration remains uniformly bounded for $\sigma \lesssim 1$
      as $n \to \infty$. At the same time, for $\sigma \gg 1$ the
      number of non-vanishing boundary curves could be large as
      $n \to \infty$, as can be seen from an example of a
      configuration consisting of one disk of $O(1)$ radius and
      $N = O(\sigma^{1/2})$ small disks of radius
      $r = O(\sigma^{-1/2})$ far apart. Thus, for such sequences of
      sets the estimate in \eqref{eq:def_n_max} is sharp.

      It remains to prove that the non-vanishing curves asymptotically
      provide a system of boundary curves for an admissible limiting
      set.  In order to handle the technical issue that even
      relatively long boundary curves may escape to infinity, we
      only take those curves that stay close to the origin, resulting
      in a further restriction on the set of indices $I$ to consider
      in the system of boundary curves.

\begin{prop}\label{prop:compactness}
  Under the assumptions of Lemma \ref{lemma:finite}, let
    $\chi_{\Omega_n} \to \chi_{\Omega_\infty}$ in $L^1(\R^2)$ for
    some $\Omega_\infty \in \mathcal{A}_\pi$ as
      $n \to \infty$. Then there exist a subsequence and a family
  $\{\gamma_{\infty,i}\}_{i\in I} \in G(\Omega_\infty)$ of
  $H^2$-regular, constant speed curves such that for all $i\in I$ we
  have
	\begin{align}
          \gamma_{n,i} \to
          \gamma_{\infty,i} \qquad \text{in} \quad H^1(\Sph^1; \R^2),
	\end{align}
	 where $\gamma_{n,i}$ for $i \in I$ is
        some sub-collection of curves from the decomposition of
        $\Omega_{n}$ into its boundary curves (modulo
          re-indexing).
\end{prop}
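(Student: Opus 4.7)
The approach is to combine Lemma \ref{lemma:finite}, which bounds the number of non-vanishing boundary curves, with Lemma \ref{lemma:compactness_single_curve}, which provides $H^1$-compactness of each such curve after translation. By Lemma \ref{lemma:finite}, only indices $i \in \{1,\ldots,N\}$ yield curves $\gamma_{n,i}$ of non-vanishing length, while the total enclosed area of the remaining curves tends to zero by \eqref{eq:convergence_short_curves}. For each $i \in \{1,\ldots,N\}$, the energy bound \eqref{eq:Flam2M} together with $\liminf_n L(\gamma_{n,i}) > 0$ allows me to invoke Lemma \ref{lemma:compactness_single_curve}: along a joint subsequence, there exists $\tilde \gamma_{\infty,i} \in H^2(\Sph^1;\R^2)$ with $\gamma_{n,i} - \gamma_{n,i}(0) \to \tilde\gamma_{\infty,i}$ in $H^1(\Sph^1;\R^2)$. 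Extracting further, the translations $a_{n,i} := \gamma_{n,i}(0)$ either stay bounded or satisfy $|a_{n,i}| \to \infty$; let $I \subset \{1,\ldots,N\}$ index the bounded ones, set $a_{\infty,i} := \lim_n a_{n,i}$, and define $\gamma_{\infty,i} := \tilde\gamma_{\infty,i} + a_{\infty,i}$, so that $\gamma_{n,i} \to \gamma_{\infty,i}$ in $H^1(\Sph^1;\R^2)$ for $i \in I$.

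Next I would verify that $\gamma := \{\gamma_{\infty,i}\}_{i \in I} \in G(\Omega_\infty)$ via the winding number representation \eqref{eq:omega_via_winding}, which for the regular sets $\Omega_n$ gives $\chi_{\Omega_n}(x) \equiv \sum_{i=1}^{N_n} \mathcal{I}(\gamma_{n,i},x) \pmod 2$ for $x \notin \partial \Omega_n$. The curves with $i > N$ contribute nothing in $L^1$ by \eqref{eq:convergence_short_curves}, while the curves with $i \in \{1,\ldots,N\} \setminus I$ have images leaving every fixed ball. For $i \in I$, Sobolev embedding promotes $H^1$-convergence to uniform convergence of the curves and their images, and in particular $\mathcal{I}(\gamma_{n,i},x) = \mathcal{I}(\gamma_{\infty,i},x)$ for $x \notin \Gamma$ and $n$ large. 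Passing to the $L^1$-limit in the winding number identity then yields $\chi_{\Omega_\infty}(x) \equiv \sum_{i \in I} \mathcal{I}(\gamma_{\infty,i},x) \pmod 2$ for a.e.\ $x \in \R^2$, which together with the inclusion $\partial^* \Omega_\infty \subset \Gamma$ produces the required identity $\Omega_\infty^* = \operatorname{int}(A_\gamma^o \cup \Gamma)$; the constant-speed parametrization of each $\gamma_{\infty,i}$ is inherited from the $\gamma_{n,i}$.

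The main obstacle is the inclusion $\partial^* \Omega_\infty \subset \Gamma$, as it rules out the creation of limit reduced boundary not captured by any of the retained curves. Given $x \in \partial^* \Omega_\infty$, standard properties of $L^1$-converging sets of finite perimeter furnish a sequence $x_n \in \partial \Omega_n$ with $x_n \to x$, and each $x_n$ sits on some $\gamma_{n,i(n)}$. Curves with vanishing length or escaping translation cannot place $x_n$ arbitrarily close to a fixed $x$ (by the isoperimetric inequality together with \eqref{eq:vanishing_curves}, respectively by divergence of their images), so $i(n) \in I$ along a subsequence, and uniform convergence $\Gamma_{n,i(n)} \to \Gamma_{\infty,i(n)}$ places $x$ on $\Gamma$. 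The delicate case is when two strands of $\partial \Omega_n$ with opposing normals collapse onto each other near $x$: such a scenario could in principle leave a piece of $\partial^* \Omega_\infty$ outside of any single limit curve. This is precisely where the mismatched-normals bound \eqref{eq:weak_estimate} of Lemma \ref{lemma:discrete} enters: it forces the set of anti-parallel boundary pairs at distance $O(\lambda_n^{-1})$ to have two-dimensional measure of order $\lambda_n^{-2} \to 0$, so that any such collapse occurs on an $\mathcal{H}^1$-negligible set and cannot spoil the inclusion $\partial^* \Omega_\infty \subset \Gamma$.
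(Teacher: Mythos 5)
Your overall outline matches the paper's proof quite closely: Step 1 (applying Lemma \ref{lemma:compactness_single_curve} to each of the $N$ long curves and retaining only the indices $I$ whose translations $\gamma_{n,i}(0)$ stay bounded), then the winding-number identity a.e., then $\partial^* \Omega_\infty \subset \Gamma$, then the set identity defining $G(\Omega_\infty)$. But there is a genuine error in your treatment of $\partial^* \Omega_\infty \subset \Gamma$. You assert that ``curves with vanishing length \ldots cannot place $x_n$ arbitrarily close to a fixed $x$ (by the isoperimetric inequality together with \eqref{eq:vanishing_curves}).'' This is false: a boundary curve of tiny length can pass through any point whatsoever, so nothing prevents $x_n \in \Gamma_{n,i(n)}$ with $i(n) > N$ for every $n$. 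What the isoperimetric inequality together with \eqref{eq:vanishing_curves} really controls is \emph{enclosed area}, not location. The correct mechanism, which is what the paper's Step 2 is really using, starts from the density bound $\tfrac13 < |\Omega_\infty \cap B_r(x)|/(\pi r^2) < \tfrac23$ at a reduced-boundary point: if for some $r$ the only boundary curves meeting $B_r(x)$ were the short ones (the escaping ones being eventually far from $x$), then $\Omega_n \cap B_r(x)$ would agree with either $\emptyset$ or $B_r(x)$ up to an error of size $\sum_{i>N}|\operatorname{int}(\gamma_{n,i})| \to 0$, forcing the density of $\Omega_n$, and hence of $\Omega_\infty$, near $x$ to $0$ or $1$, a contradiction. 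So some $\Gamma_{n,i}$ with $i \in A$ must enter $B_r(x)$ for all large $n$, and uniform convergence of those curves then gives $x \in \Gamma$. Without this argument your proof of the key inclusion does not go through.

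A second, smaller issue: your final paragraph invokes the mismatched-normals bound \eqref{eq:weak_estimate} to rule out the collapse of two boundary strands near $x$. This is not how the paper handles that scenario and it is not actually needed here. Points where two strands of $\partial\Omega_n$ with opposing normals collapse have density $0$ or $1$ in the limit, so they lie off $\partial^*\Omega_\infty$ and do not threaten the inclusion $\partial^*\Omega_\infty \subset \Gamma$ at all; the Bellettini--Mugnai framework is designed precisely so that such collapsed segments are carried by the limit curves traversed twice, and are then removed in forming $\Omega_\infty^*$. The estimate \eqref{eq:weak_estimate} from Lemma \ref{lemma:discrete} is used elsewhere, in the lower-bound argument (Proposition \ref{prop:lower_bound}, Steps 1 and 4), not in the compactness step. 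Finally, you state but do not really establish the identity $\Omega_\infty^* = \operatorname{int}(A_\gamma^o \cup \Gamma)$: the paper's Step 4 does this by combining continuity of the winding number on $\R^2\setminus\Gamma$ with the a.e.\ identity $\mathcal{I}(\gamma_\infty,\cdot)\equiv \chi_{\Omega_\infty}\pmod 2$ proved via Borel--Cantelli along a further subsequence, and that Borel--Cantelli refinement is also needed to make your Step 2 rigorous (passing from the $L^1$-smallness of short-curve interiors to the pointwise vanishing of their winding number contributions).
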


\begin{proof}[Proof of Lemma \ref{lemma:finite}]
  We begin by observing that by \eqref{eq:FlaP} and
    \eqref{eq:Flam2M} we have that $P(\Omega_n)$ is uniformly bounded
    and, in particular, so are $L(\gamma_{n,i})$. By the fact that
  our enumeration of $\gamma_{n,i}$ in $i$ has decreasing lengths for
  each $n$, for any subsequence $n_m$ with $m \in \N$ and any
  $j\leq k$, $j,k\in \N$ we have
	\begin{align}\label{eq:decreasing}
          \limsup_{m\to \infty} L( \gamma_{n_m,j}) \geq \limsup_{m\to
          \infty} L( \gamma_{n_m,k}). 
	\end{align}

        \textit{Step 1: Bound on the number of long curves.}  If
        $N=0$, there is nothing to prove. Therefore, we may assume
        that $N \geq 1$ and let $i \in \N$ be such that
        $i \leq N \leq \infty$.  Going to a subsequence $n_m$,
        $m \in \N$, that depends on $i$, we may assume that
	\begin{align}
          \label{eq:Lnilimsuplim}
          \limsup_{n \to \infty} L( \gamma_{n,i}) = \lim_{m \to
          \infty} L( \gamma_{n_m,i}) >0 
	\end{align}
        exists.  By the inequality \eqref{eq:decreasing}, we may
        repeatedly apply Lemma
        \ref{lemma:compactness_single_curve}
	to get a further, non-relabeled subsequence obeying
          \eqref{eq:Lnilimsuplim} and closed limit curves
        $ \gamma_{\infty,j} \in H^2(\Sph^1;\R^2)$ with constant speed
        and
        $L( \gamma_{\infty,j}) = \lim_{m \to \infty}
          L(\gamma_{n_m,j})$ for all $j =1,\ldots,i$ with the
          following property: We have 
	\begin{align}
          \gamma_{n_m,j} - \gamma_{n_m,j}(0) \to
          \gamma_{\infty,j} \qquad  \text{in} \quad H^1(\Sph^1;
          \R^2),
	\end{align}
       as $m \to \infty$, and
	\begin{align}
	  \begin{split}
            & \quad \sum_{j=1}^i L( \gamma_{\infty,j}) \left(\sigma
              +\frac{\pi}{2} \int_0^1  \kappa_{\infty,j}^ 2 \intd
              t\right)\\ 
            & \leq C \sum_{j=1}^i \liminf_{m \to \infty} \Bigg(
            \sigma_{n_m} L( \gamma_{n_m,j})\\
            & \qquad \qquad \qquad \qquad + \lambda_{n,m}^2 \int_{
              \Gamma_{n_m,j}} \int_{ H_-(\nu(y)) \Delta
              \lambda_{n_m}(\Omega_{n_m} - y) } \left| \nu(y) \cdot
              \frac{ z}{|z|} \right| \frac{e^{-|z|}}{|z|} \intd^2 z
            \intd \Hd^1(y) \Bigg),
           \end{split}
         \end{align}
         for some $C > 0$ universal, where $ \kappa_{\infty,j}$
         is the curvature of $ \gamma_{\infty,j}$.  Consequently, we
         have
         \begin{align}\label{eq:sum_compactness_estimate}
           \sum_{j=1}^i L( \gamma_{\infty,j}) \left(\sigma  +
           \frac{\pi}{2} \int_0^1 \kappa_{\infty,j}^2
           \intd t\right) 
           \leq C \limsup_{n \to \infty}
           \lambda_n^2 F_{\lambda_n,\alpha_n}(\Omega_n) =
           C M,
	\end{align}
	for some $C > 0$ universal.
	
	By Fenchel's theorem \cite[Theorem 3 and Remark 5, Section
        5.7]{docarmo}, together with a straightforward approximation
        argument to remove the regularity assumption therein, for
          each $j = 1, \ldots, i$ we have
	\begin{align}
          \int_{ \Gamma_{\infty,j}} |\kappa_{\infty,j}| \intd \Hd^1
          \geq 2\pi. 
	\end{align}
	As a result of Jensen's inequality, we therefore obtain
		\begin{align}
          L( \gamma_{\infty,j}) \int_0^1  \kappa_{\infty,j}^ 2 \intd t
          \geq \frac{1}{ L( \gamma_{\infty,j})} \left(\int_{
          \Gamma_{\infty,j}} |\kappa_{\infty,j}| \intd \Hd^1 \right)^2
          = \frac{4\pi^2}{ L( \gamma_{\infty,j}) }.
	\end{align}
	Combining this with estimate
        \eqref{eq:sum_compactness_estimate} and Young's inequality, we
        obtain
	\begin{align}
          \left(8\pi^3 \sigma\right)^{\frac{1}{2}}i \leq \sum_{j=1}^i
          \left(\sigma L( \gamma_{\infty,j}) + \frac{2\pi^3}{
          L( \gamma_{\infty,j})} \right) \leq C M,
	\end{align}        
        for some $C > 0$ universal. In particular, in view of the
        arbitrariness of $i$ we have $N < \infty$, and the upper bound
        in \eqref{eq:def_n_max} holds.

        \textit{Step 2: Estimates for vanishingly short curves.}  To
        handle the boundary curves $\gamma_{\lambda,i}$ for $i > N$,
        assume towards a contradiction that there exists a sequence
        of $i_n \in \N$ with $i_n > N$ such that
	\begin{align}
		\limsup_{n\to \infty} L(\gamma_{n,i_n}) >0.
	\end{align}
	By discreteness of $\N$, we have $i' := \min_{n>0} i_n \in \N$ and $i' >N$.
	As the curves are ordered by decreasing length, we therefore also get
	\begin{align}
          \limsup_{n \to \infty} L(\gamma_{n,i'}) \geq
          \limsup_{n\to \infty} L(\gamma_{n,i_n}) >0, 
	\end{align}
	which by way of definition \eqref{eq:N} would imply a
        contradiction.  This yields \eqref{eq:vanishing_curves}.
        
        As a result of the isoperimetric inequality and
          \eqref{eq:FlaP} we have
	\begin{align}
          \sum_{i > N} | \operatorname{int}(\gamma_{n,i})|
          \leq
          \frac{1}{4\pi} \sum_{i > N} L^2(\gamma_{n,i})
          \leq
          \frac{ \sup_{i> N} L(\gamma_{n,i}) }{ 4 \pi} \sum_{i > N}
          L(\gamma_{n,i})  \leq \frac{P(\Omega_n)}{4\pi}  \sup_{i> N}
          L(\gamma_{n,i}) \to 0  
	\end{align}
	as $n \to \infty$, proving
        \eqref{eq:convergence_short_curves}.

        Lastly, if $N = 0$ then
          \eqref{eq:convergence_short_curves} would imply
          $|\Omega_n| \to 0$ as $n \to \infty$, contradicting the fact
          that $\Omega_n \in \mathcal A_\pi$ for all $n$ large
          enough. This concludes the proof.
\end{proof}

\begin{proof}[Proof of Proposition \ref{prop:compactness}]
\textit{Step 1: Construct the limiting curves.}

Let $N \in \N$ be as in Lemma \ref{lemma:finite}.  We take a
non-relabeled subsequence such that for all $i =1,\ldots, N$ we have
\begin{align}
	\limsup_{n \to \infty} L( \gamma_{n,i}) =
\lim_{n \to \infty} L( \gamma_{n,i})
\end{align}
and such that we either have
$\limsup_{n \to \infty} \| \gamma_{n,i}\|_\infty < \infty$ or
$\lim_{n \to \infty} \| \gamma_{n,i}\|_\infty = \infty$.  Let
$A \subset \{1,\ldots , N\}$ be the set of indices such that the
former alternative holds, i.e., such that $\gamma_{n,i}$ is uniformly
bounded in $n$ for all $i \in A$. This set is non-empty, as otherwise
by Lemma \ref{lemma:finite} and uniform boundedness of
  $L(\gamma_{n,i})$ this would contradict the $L^1$-convergence of
the characteristic functions of $\Omega_n$ to a set of positive
Lebesgue measure.
Thus we may apply Lemma \ref{lemma:compactness_single_curve} to get
another subsequence such that for all $i \in A$ there exist closed
limit curves $ \gamma_{\infty,i} \in H^2(\Sph^1;\R^2)$ with
constant speed and $L( \gamma_{\infty,i}) >0$ such that
$ \gamma_{n,i} \to \gamma_{\infty,i}$ in $H^1(\Sph^1; \R^2)$ as
$n \to \infty$.  

\medskip

In the following, we use the abbreviations
$\gamma_\infty = \{ \gamma_{\infty,i} \}_{i \in A}$ and
$\Gamma_\infty = \bigcup_{i \in A} \gamma_{\infty,i}([0,1])$. \medskip

\textit{Step 2: Prove $\partial^* \Omega_\infty \subset \Gamma_\infty$.}

Let $x \in \partial^* \Omega_\infty$.  Then there exists
$r_0 \in (0,1)$ such that for all $r\in (0,r_0)$ we have
\cite{ambrosio}
	\begin{align}\label{eq:density_bound}
		\frac{1}{3} < \frac{|\Omega_\infty \cap B_r(x)|}{\pi r^2} < \frac{2}{3}.
	\end{align}
        As $\chi_{\Omega_n} \to \chi_{\Omega_\infty}$ in $L^1$, we
        have for $n$ sufficiently big depending on $r$ that also
	 \begin{align}
		\frac{1}{3} < \frac{|\Omega_n \cap B_r(x)|}{\pi r^2} < \frac{2}{3},
	\end{align}
	so that the relative isoperimetric inequality implies
	\begin{align}
          \label{eq:H1den}
          \frac{\mathcal{H}^1(\partial \Omega_n \cap B_r(x) )}{r}
          > \frac{1}{C}, 
	\end{align}
        for some $C > 0$ universal.  Therefore, from
        \eqref{eq:H1den} and Lemma
        \ref{lemma:finite}, we get for $n$ sufficiently large
        that
	\begin{align}
		\frac{\mathcal{H}^1\left(\left(\bigcup_{ i \in A} \Gamma_{n,i} \right) \cap B_r(x) \right)}{r} > \frac{1}{C}.
	\end{align}
	Consequently, there exists a subsequence $n_k$ for $k \in \N$
        with $\frac{1}{k} < r_0$, $i \in A$, and
        $t_k, t_\infty \in [0,1]$ such that
        $|\gamma_{n_k,i}(t_k) -x| < \frac{1}{k}$ and
        $t_k \to t_\infty$ as $k \to \infty$.  Because the curves
        $\gamma_{n_k,i}$ converge in $C^0(\Sph^1;\R^2)$, we get
        $\gamma_{\infty,i}(t_\infty) = x$.

        \textit{Step 3:
        For almost all $x \in \R^2$ we have $ x \not \in \Gamma_\infty$ and
        \begin{align} 
          \mathcal{I}(\gamma_\infty, x) \equiv
          \chi_{\Omega_\infty}(x) \pmod{2}. 
        \end{align}}

      As $\Gamma_n$ for $n \in \N \cup \{\infty\}$ has Hausdorff
      dimension $1$, we have
      $\left| \bigcup_{n\in \N \cup \{\infty\}} \Gamma_n \right| =0$.
      Let $ x \in \R^2 \setminus \Gamma_\infty$.  Since the curves
      $\gamma_{n,i}$ for $i\in A$ converge in $H^1(\Sph^1; \R^2)$ and
      $C^0(\Sph^1; \R^2)$ and since the set
      $\R^2 \setminus \Gamma_\infty$ is open, we have
	\begin{align}
		\mathcal{I}(\gamma_\infty, x) = \lim_{n \to \infty} \sum_{i\in A} \mathcal{I}(\gamma_{n,i},  x).
	\end{align}
	As for $i \in \N$ with $i \leq N$ and $i \not \in A$ the
        curves $\gamma_{n,i}$ run off to infinity, we must have
        $\mathcal{I}(\gamma_{n,i}, x)=0$ for sufficiently large $n$
        and those values of $i$.  We thus get
	\begin{align}
		\mathcal{I}(\gamma_\infty,  x) = \lim_{n \to \infty} \sum_{i=1}^{N} \mathcal{I}(\gamma_{n,i},  x) 
	\end{align}
	
	Due to the convergence \eqref{eq:convergence_short_curves}, we
        can choose a non-relabeled subsequence
        such that
	\begin{align}
		\sum_{n=1}^\infty \sum_{i > N} \left| \overline{\operatorname{int}(\gamma_{n,i})}\right| < \infty. 
	\end{align}
	The Borel-Cantelli Lemma \cite[p.\ 42]{stein-shakarchi}
        therefore implies that for almost all $x \in \R^2 $ and
        $n_0(x)$ sufficiently big we have
        $ x \not \in \overline{\operatorname{int}(\gamma_{n,i})}$ for
        all $i >N$ and all $n> n_0(x)$.  In particular, for almost all
        $ x \in \R^2$ we get
	\begin{align}\label{eq:index_converges}
		\mathcal{I}(\gamma_\infty,  x) = \lim_{n \to \infty}
          \sum_{i=0}^{N_{n}} \mathcal{I}(\gamma_{{n},i},  x),
	\end{align}
	since eventually, the sum only has at most $N$ non-zero
        terms.
	
	According to the representation \eqref{eq:omega_via_winding},
        for almost all $ x \in \R^2$ we have \begin{align}
          \sum_{i=1}^{N_{n}} \mathcal{I}(\gamma_{n,i}, x) \equiv
          \chi_{\Omega_n}(x) \pmod{2}.
	\end{align}
	By $| \Omega_\infty \Delta \Omega_{n}| \to 0$ as $n \to
        \infty$, we may choose another, non-relabeled subsequence such
        that $\chi_{\Omega_n} \to \chi_{\Omega_\infty}$ pointwise
        almost everywhere.  Combining these insights with the
        convergence \eqref{eq:index_converges}, for almost all
        $x \in \R^2$ we get
        \begin{align}
          \mathcal{I}(\gamma_\infty,  x)  \equiv \chi_{\Omega_\infty}
          (x) \pmod{2}. 
        \end{align}

        \textit{Step 4: Prove
          $\Omega_\infty^*=
          \operatorname{int}\left(A_{\gamma_\infty}^o \cup
            \Gamma_\infty \right)$.}
	
        We recall that $A_{\gamma_\infty}^o$ and
          $\Omega_\infty^*$ are defined via
          \eqref{eq:winding_interior} and \eqref{eq:Omstar},
          respectively. We first prove the inclusion
        $\Omega_\infty^* \subset
        \operatorname{int}\left(A_{\gamma_\infty}^o \cup
          \Gamma_\infty \right)$. Notice that since the set
          $\Omega_\infty^*$ is open, it is enough to show that
          $\Omega_\infty^* \subset A_{\gamma_\infty}^o \cup
          \Gamma_\infty$. So, let $x\in \Omega_\infty^*$.  If
        $x\in \Gamma_\infty$, there is nothing to prove. Otherwise,
        there exists $r \in (0,1)$ such that
        $|B_r(x) \setminus \Omega_\infty |=0$ and
        $B_{ r}(x) \cap \Gamma_\infty = \emptyset$.  By Step 3, for
        almost all $\tilde x \in B_r(x)$ we have
	\begin{align}
          \mathcal{I}(\gamma_\infty, \tilde x) \equiv 1 \pmod{2}.
	\end{align}
	Continuity of the index in $B_{ r}(x)$ then gives
	\begin{align}
          \mathcal{I}(\gamma_\infty, x) \equiv 1 \pmod{2}.
	\end{align}
	Consequently, we get $x \in A_{\gamma_\infty}^o$.
	
	Let now $x \not\in \Omega_\infty^*$.
	Then for every $r \in (0,1)$ we have
	\begin{align}
		 |B_r(x)\setminus (\Omega_\infty \cup \Gamma_\infty) | >0.
	\end{align}
	Again by Step 3, for all $r \in (0,1)$ and almost all
        $\tilde x \in B_r(x)\setminus ( \Omega_\infty \cup
        \Gamma_\infty) $ we get that
	\begin{align}
		\mathcal{I}(\gamma_\infty, \tilde x) \equiv 0  \pmod{2},
	\end{align}
	so that there exists a sequence of points
        $\tilde x_k \in B_{k^{-1}}(x)\setminus
        \left(A_{\gamma_\infty}^o \cup \Gamma_\infty\right)$ for
        all $k \in \N$. Therefore, we have
        $x \not\in \operatorname{int}\left( A_{\gamma_\infty}^o
          \cup \Gamma_\infty\right)$, proving the claim of this
        step.
	
	Finally, combining the statements of Steps 2 and 4 we
          obtain that
        $\{\gamma_{\infty,i}\}_{i \in A} \in G(\Omega_\infty)$,
          concluding the proof.
\end{proof}
	
\section{$\Gamma$-convergence} \label{sec:Gamma}
\subsection{Upper bound}

Both the upper and the lower bounds crucially depend on the
representation \eqref{eq:representation_revised}.  In the upper bound
presented in Lemma \ref{lem:expansion}, we observe that the
anisotropic blowup $A_\lambda R_{\nu(y)} (\Omega - y)$ of a
sufficiently regular set $\Omega \subset \R^2$ converges to the
subgraph of the parabola
$z_1 \mapsto -\frac{1}{2} \kappa(y) z_1^2$.  The integral in
the non-local term in representation
\eqref{eq:representation_revised} can then be explicitly calculated in
the limit using Fubini's theorem, the integral over $z_2$ giving
the dependence of the energy on $\kappa^2(y)$.

\begin{lemma}\label{lem:expansion}
  Let $\alpha > 0$ and let $\Omega$ be a regular set. Then, as
  $\lambda \to \infty$ we have
	\begin{align*}
          F_{\lambda,\alpha}(\Omega) \leq \left( 1- \frac{1}{2\pi
          \alpha^2}\right) P(\Omega) + \frac{1}{8\pi \alpha^4
          \lambda^2}   \int_{\partial \Omega} \kappa^2 \intd \Hd^1 + 
          o\left(\lambda^{-2} \right).
	\end{align*}
   \end{lemma}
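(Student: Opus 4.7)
The plan is to use the representation \eqref{eq:representation_revised} from Lemma~\ref{lem:energy_on_boundary}, which already isolates the leading perimeter contribution, and then identify the inner non-local integral as an asymptotic multiple of $\kappa^2$ via a parabolic blow-up. Setting
\[
I_\lambda(y) := \int_{H^0_-(e_2) \Delta A_\lambda R_{\nu(y)}(\Omega - y)} |z_2| \frac{e^{-\alpha\sqrt{z_1^2 + z_2^2/\lambda^2}}}{z_1^2 + z_2^2/\lambda^2} \intd^2 z,
\]
so that $F_{\lambda,\alpha}(\Omega) = (1 - \tfrac{1}{2\pi\alpha^2}) P(\Omega) + \tfrac{1}{4\pi\alpha\lambda^2}\int_{\partial\Omega} I_\lambda \intd \Hd^1$, the lemma reduces to proving $I_\lambda(y) \to \tfrac{\kappa(y)^2}{2\alpha^3}$ along with enough uniformity to interchange limit and integral. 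The argument actually produces equality, but the stated inequality is all that is needed downstream.

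For fixed $y \in \partial \Omega$, in the rotated frame where $R_{\nu(y)}$ aligns the outward normal with $e_2$, the smooth boundary is a graph $v = \psi_y(u)$ with $\psi_y(0) = \psi_y'(0) = 0$ and $\psi_y''(0) = -\kappa(y)$, and $\Omega$ is locally the subgraph. After the anisotropic rescaling $A_\lambda$, this graph becomes $z_2 = \lambda^2 \psi_y(z_1/\lambda) = -\tfrac{\kappa(y)}{2}z_1^2 + O(z_1^3/\lambda)$, converging locally uniformly to the osculating parabola. Consequently $A_\lambda R_{\nu(y)}(\Omega - y)$ converges in $L^1_{\mathrm{loc}}$ to $\{z_2 < -\tfrac{\kappa(y)}{2}z_1^2\}$, and its symmetric difference with $H^0_-(e_2) = \{z_2 < 0\}$ converges a.e.\ to the parabolic strip $E_y := \{(z_1, z_2) : z_2 \text{ lies between } 0 \text{ and } -\tfrac{\kappa(y)}{2}z_1^2\}$; simultaneously the kernel converges pointwise to $|z_2| e^{-\alpha|z_1|}/z_1^2$.

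To upgrade this to convergence of $I_\lambda(y)$ via dominated convergence, I would construct a $\lambda$-uniform integrable dominant using the $C^\infty$ compactness of $\partial \Omega$. Near the origin in $z_1$, uniform second-derivative bounds $|\psi_y(u)| \leq C u^2$ confine the symmetric difference to a parabolic strip $|z_2| \leq C z_1^2$, on which the integrand is controlled by $C e^{-\alpha|z_1|}$. In the far range, two mechanisms produce smallness: on $\{|z_2| > C\lambda^2\}$ the rescaled set is absent because $\Omega$ is bounded, so the symmetric difference reduces to a deep tail of the half-plane, but the kernel together with the natural $\lambda^2$ rescaling factor in $z_2$ contributes only like $\lambda e^{-c\lambda}$; for $|z_1|$ large compared to $\log\lambda$, the factor $e^{-\alpha|z_1|}$ defeats the $\lambda^2$ coming from the normal rescaling. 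With a uniform dominant in hand, Fubini evaluates the limit:
\[
I_\infty(y) = \int_\R \left( \int_0^{|\kappa(y)| z_1^2/2} s \intd s \right) \frac{e^{-\alpha|z_1|}}{z_1^2} \intd z_1 = \frac{\kappa(y)^2}{8} \cdot \frac{4}{\alpha^3} = \frac{\kappa(y)^2}{2\alpha^3}.
\]

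A second dominated convergence on the compact manifold $\partial \Omega$, justified by the uniform-in-$y$ bound produced above, then gives $\int_{\partial \Omega} I_\lambda \intd \Hd^1 \to \tfrac{1}{2\alpha^3}\int_{\partial\Omega} \kappa^2 \intd \Hd^1$, and multiplying by $\tfrac{1}{4\pi\alpha\lambda^2}$ yields the claimed asymptotics. The principal technical obstacle is the uniform integrable dominant: the local parabolic pinch is a routine Taylor argument, but globally the symmetric difference with the unbounded half-plane contains pieces whose naive measure grows with $\lambda$, and one must carefully balance the exponential decay of the Yukawa kernel against the anisotropic $\lambda^2$ scaling to show these contributions are negligible.
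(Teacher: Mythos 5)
Your proposal takes essentially the same route as the paper: both start from representation~\eqref{eq:representation_revised}, recognize that the anisotropic blow-up $A_\lambda R_{\nu(y)}(\Omega-y)$ converges to the subgraph of the osculating parabola $z_2=-\tfrac12\kappa(y)z_1^2$, and compute the resulting inner integral, which indeed evaluates to $\kappa(y)^2/(2\alpha^3)$. The implementations differ. The paper parametrizes by arc length via $g_\lambda,h_\lambda$, works with explicit pointwise monotonicity estimates, truncates the $z_1$-integral at $\pm\eps\lambda$ and chooses $\eps=\lambda^{-1/2}$ at the end; this yields only an upper bound, which suffices since the matching lower bound is supplied by Proposition~\ref{prop:lower_bound}. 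You instead use the horizontal graph coordinate and propose dominated convergence, which is cleaner and would give the actual limit. The obstacle you flag is genuine: there is no $\lambda$-uniform integrable dominant over all of $\R^2$, because the symmetric difference contains, for each $\lambda$, the entire half-plane region below $\{z_2 = -C\lambda^2\}$, whose measure is infinite and whose position escapes to $-\infty$. The fix is precisely the split you sketch but should state as such: restrict the DCT argument to the truncated integrand supported in $|z_1|\le\delta\lambda$, $|z_2|\le C\lambda^2$ (where the uniform $C^2$-bound on $\partial\Omega$ pins the symmetric difference inside the parabolic strip $|z_2|\le Cz_1^2$, yielding the integrable dominant $\chi_{\{|z_2|\le Cz_1^2\}}|z_2|e^{-\alpha|z_1|}/z_1^2$), and bound the complementary far-range contribution directly by $O(\lambda^2 e^{-c\lambda})$. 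With this separation made explicit, the proof is correct and conceptually parallel to the paper's, with dominated convergence replacing the paper's elementary estimates.
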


   \begin{proof}
     All constants in this proof may depend on $\Omega$ and
       $\alpha$, in contrast to the rest of the paper.
	
     By the identity \eqref{eq:representation_revised}, we only need
     to compute the non-local term therein.  Let
     $y \in \partial \Omega$.  For the sake of convenience, in this
     proof we parametrize $\mathbb{S}^1$ on
     $[-\frac{1}{2},\frac{1}{2}]$ instead of the usual parametrization
     on the unit interval. Let
     $\gamma : [-\frac{1}{2},\frac{1}{2}] \to \R^2$ be a constant
     speed parametrization of the connected component of
     $\partial \Omega$ containing $y$ and such that $\gamma(0) = y$.
		
     For $\lambda>1$ and $s \in \R$, let
     $T_\lambda s := (L(\gamma) \lambda)^{-1} s$, so that $s$ plays
     the role of an arc length parameter after blowup by
       $T_\lambda^{-1} = L(\gamma) \lambda$.  Let $\tau(t) :=
       \gamma'(t) / L(\gamma)$ be the unit tangent vector to $\gamma$
       at point $\gamma(t)$, and let
	\begin{align}
          g_{\lambda}(s) & := \lambda
                           \tau(0) \cdot
                           \left(\gamma(T_\lambda s) - \gamma(0
                           )\right), \label{def:g_upper}\\ 
          h_{\lambda}(s) & := \lambda^2 \nu(0) \cdot
                           \left(\gamma(T_\lambda s ) - \gamma(0
                           )\right).\label{def:h_upper} 
	\end{align}
	be the local Cartesian coordinates of $\gamma(s)$ with respect
        to the orthonormal basis $\{ \tau(0) ,\nu(0)\}$ after
        anisotropic blowup, see Figure \ref{fig:smooth_coords}.  By
        Taylor expansion and identity \eqref{eq:second_derivative},
        for $s \in \R$ we have
	\begin{align}
          g_{\lambda}(s) & = s +
                           O(\lambda^{-2}s^3),\label{eq:g_upper}\\ 
          g'_{\lambda}(s) & = 1 + O(
                            \lambda^{-2}s^2),\label{eq:gprime_upper}\\ 
          h_{\lambda}(s) & = - \frac{ \kappa(y)}{2}s^2+
                           O(\lambda^{-1}s^3),\label{eq:h_upper}  
	\end{align}
	as well as
	\begin{align}\label{eq:gprime_mod}
		|g'_{\lambda}(s)| \leq 1
	\end{align}
        for all $s$.  In particular, if $\lambda^{-1}s$ is
        sufficiently small, the map $g_\lambda$ is monotone increasing
        with $g'_\lambda(s) \geq \frac{1}{2}$ and is therefore
        invertible.  Thus for all $\eps >0$ sufficiently small we have
	\begin{align}
	  \begin{split}
            & \quad A_\lambda \left(  R_{\nu(y)} (\Omega- y)  \cap  (-\eps,\eps)^2\right)\\
            &= \left\{ (z_1,z_2) : z_1 \in (-\lambda \eps ,\lambda
              \eps) , z_2 \in \left(-\lambda^2 \eps , h_\lambda (
                g_\lambda^{-1}(z_1))\right) \right\}.
	  \end{split}
	\end{align}

	\begin{figure}
		\centering
		\includegraphics{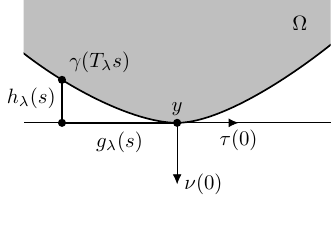}
		\caption{\label{fig:smooth_coords} Sketch of the
                  anisotropic blowup of $\gamma(T_\lambda s)$ in the
                  basis $\{\tau(0),\nu(0)\}$ with components
                  $g_\lambda(s)$ and $h_\lambda(s)$. }
	\end{figure}	

	An explicit calculation in polar coordinates for
          $z = (z_1, z_2)$ gives
	\begin{align}
          \int_{ \R^2 \backslash A_\lambda (-\eps,\eps)^2 }
          \left| z_2  \right| \frac{e^{-\alpha \sqrt{z_1^2 +
          \frac{z_2^2}{\lambda^2} }}}{z_1^2 + \frac{z_2^2}{\lambda^2}}
          \intd^2 z \ \leq \ \lambda^2
          \int_{\stcomp{B}_{\lambda \eps}(0)} \frac{e^{-\alpha
          |z|}}{|z|} \intd^2 z = O\left(\frac{\lambda^2}{\alpha} e^{-
          \eps \alpha \lambda}\right). 
	\end{align}
	By Fubini's theorem, we therefore have
	\begin{align}\label{eq:terror_term_1}
	  \begin{split}
            &\quad \int_{ H^0_-(e_2) \Delta A_\lambda R_{\nu(y)} 
              (\Omega - y) } \left| z_2 \right| \frac{e^{-\alpha
                \sqrt{z_1^2 + \frac{z_2^2}{\lambda^2} }}}{z_1^2 +
              \frac{z_2^2}{\lambda^2}} \intd^2 z \\
		& = \int_{-\lambda \eps}^{\lambda \eps}
                \int_0^{\left| h_\lambda(g_\lambda^{-1}(z_1))\right|} \left| z_2
                \right| \frac{e^{-\alpha \sqrt{z_1^2 +
                      \frac{z_2^2}{\lambda^2} }}}{z_1^2 +
                  \frac{z_2^2}{\lambda^2}} \intd z_2 \intd z_1 +
                O\left(\frac{\lambda^2}{\alpha} e^{- \eps \alpha
                    \lambda}\right).
	  \end{split}
	\end{align}
	By monotonicity of the exponential function and inverse powers, as well as estimate \eqref{eq:gprime_mod}, we get
	\begin{align}\label{eq:terror_term_2}
	  \begin{split}	
            & \quad \int_{-\lambda \eps}^{\lambda \eps}
            \int_0^{\left|h_\lambda(g_\lambda^{-1}(z_1))\right|}
            \left| z_2 \right| \frac{e^{-\alpha \sqrt{z_1^2 +
                  \frac{z_2^2}{\lambda^2} }}}{z_1^2 +
              \frac{z_2^2}{\lambda^2}} \intd z_2 \intd z_1\\
            &  \leq \int_{-\lambda \eps }^{\lambda \eps}
            \int_0^{\left|h_\lambda(g_\lambda^{-1}(z_1))\right|}
            \left| z_2  \right| \frac{e^{-\alpha |z_1|}}{z_1^2 } \intd
            z_2  \intd z_1 \\ 
            & \leq \frac{1 }{2} \int_{g_\lambda^{-1}(-\lambda \eps)
            }^{g_\lambda^{-1}(\lambda \eps)} h^2_\lambda(s)
            \frac{e^{-\alpha |g_\lambda(s)|} }{g^2_\lambda(s) } \intd
            s.
	  \end{split}
	\end{align}
	Using the expansions \eqref{eq:g_upper} and \eqref{eq:h_upper},
        we get
	\begin{align}\label{eq:first_expansion}
	  \begin{split}
            & \quad \frac{1}{2}\int_{g_\lambda^{-1}(-\lambda \eps)
            }^{g_\lambda^{-1}(\lambda \eps)} h^2_\lambda(s)
            \frac{e^{-\alpha |g_\lambda(s)|}}{g^2_\lambda(s) } \intd
            s\\ 
            & = \frac{1 }{8} \int_{g_\lambda^{-1}(-\lambda \eps)
            }^{g_\lambda^{-1}(\lambda \eps)} s^2 \left( \kappa^2(y) +
              O(\lambda^{-1} |s|)\right) e^{-\alpha |s|(1 +
              O(\lambda^{-2}s^2)} \intd s.
	  \end{split}
	 \end{align}
	 
	 As $g_\lambda'$ is strictly positive on
         $(-\lambda \eps, \lambda \eps)$, we have
         $\lambda^{-1} |s| \leq C \eps$ for all
         $s \in \left(g_\lambda^{-1}(-\lambda \eps) ,
           g_\lambda^{-1}(\lambda \eps) \right)$.  For $\eps>0$
         sufficiently small depending only on $\partial \Omega$, the
         first error term in identity \eqref{eq:first_expansion} is
         estimated by
	 \begin{align}
           \int_{g_\lambda^{-1}(-\lambda \eps)
           }^{g_\lambda^{-1}(\lambda \eps)} \lambda^{-1} |s|^3
           e^{-\frac{\alpha}{C}|s|} \intd s \leq \lambda^{-1}
           \int_{\R} |s|^3e^{-\frac{\alpha}{C}|s|} \intd s =
           O(\alpha^{-4}\lambda^{-1}). 
	 \end{align}
	 Similarly, for all
         $s \in \left(g_\lambda^{-1}(-\lambda \eps) ,
           g_\lambda^{-1}(\lambda \eps) \right)$ we have 
	 \begin{align}
           e^{-\alpha |s|\left(1 + O(\lambda^{-2} s^2)\right)} \leq e^{-(1-C\eps)\alpha |s|},
	 \end{align}
	 so that the identity \eqref{eq:first_expansion} can be
         estimated from above as
	\begin{align}\label{eq:terror_term_3}
	  \begin{split}
            & \quad \frac{1 }{2}\int_{g_\lambda^{-1}(-\lambda \eps)
            }^{g_\lambda^{-1}(\lambda \eps)} h^2_\lambda(s)
            \frac{e^{-\alpha |g_\lambda(s)|}}{g^2_\lambda(s) } \intd
            s\\
            & \leq \frac{ \kappa^2(y) }{8}
            \int_{g_\lambda^{-1}(-\lambda \eps)
            }^{g_\lambda^{-1}(\lambda \eps)} s^2 e^{-(1-C\eps)\alpha
              |s|} \intd s + O(\alpha^{-4}\lambda^{-1}).
	  \end{split}
	\end{align}
	Finally, explicit integration gives
	\begin{align}\label{eq:terror_term_4}
	  \begin{split}
            &\frac{ \kappa^2(y) }{8} \int_{g_\lambda^{-1}(-\lambda
              \eps) }^{g_\lambda^{-1}(\lambda \eps)} s^2
            e^{-(1-C\eps)\alpha )|s|} \intd s\\ 
            & \leq \frac{\kappa^2(y)}{8} \int_{\R} s^2
            e^{-(1-C\eps)\alpha |s|} \intd s\\ 
            & = \frac{ \kappa^2(y)}{4\alpha^3(1-C\eps)^3}
            \int_0^\infty s^2 e^{-s} \intd s  \\ 
            & = \frac{ \kappa^2(y)}{2\alpha^3(1-C\eps)^3}.
	  \end{split}
	\end{align}
	
	Combining the estimates \eqref{eq:terror_term_1},
        \eqref{eq:terror_term_2}, \eqref{eq:terror_term_3}, and
        \eqref{eq:terror_term_4}, for all $\eps>0$ sufficiently small
        depending only on $\partial \Omega$ we get
	\begin{align}
	  \begin{split}
            &\quad \int_{ H^0_-(e_2) \Delta A_\lambda R_{\nu(y)}
              (\Omega - y) } \left| z_2 \right| \frac{e^{-\alpha
                \sqrt{z_1^2 + \frac{z_2^2}{\lambda^2} }}}{z_1^2 +
              \frac{z_2^2}{\lambda^2}} \intd^2 z \\
            & \leq \frac{ \kappa^2(y)}{2\alpha^3(1-C\eps)^3} +
            O\left(\alpha^{-4}\lambda^{-1} + \frac{\lambda^2}{\alpha}
              e^{-\eps \alpha \lambda} \right)
	  \end{split}
	\end{align}
	Choosing $\eps = \lambda^{-\frac{1}{2}}$ and integrating over
        $y \in \partial \Omega$, we obtain the statement.
\end{proof}

\subsection{Lower bound}

The argument for the lower bound follows much the same strategy as the
upper bound.  However, we of course have to ensure that the
computation is valid along a sequence of only weakly convergent
objects.  We first point out that the microscopic difference quotients
considered in the proof of Lemma
\ref{lemma:compactness_single_curve} in fact converge to the curvature
in a weak sense.

\begin{lemma}\label{lemma:weak_curvature}
  Under the assumptions of Lemma
    \ref{lemma:compactness_single_curve}, let $(\gamma_{n_k})$ be the
    subsequence and let $\gamma_\infty$ be its limit from the
    conclusion of Lemma \ref{lemma:compactness_single_curve}. Then, if
    $\kappa_\infty$ is the curvature of $\gamma_\infty$ we have
        	\begin{align}\label{eq:conv_anis_scaling}
                  \lambda_{n_k} \left[
                  \nu_{n_k}\left(t+\frac{s}{L(\gamma_{n_k})
                  \lambda_{n_k}}\right) - 
                  \nu_{n_k}(t)) \right]\warr   s
                  \kappa_\infty(t)
                  \frac{\gamma'_\infty(t)}{L(\gamma_\infty)}
	\end{align}
        as $k \to \infty$ in $\mathcal D'(\Sph^1; \R^2)$ for all
        $s\in \R$, as well as in $\mathcal D'(\Sph^1\times \R; \R^2)$.
\end{lemma}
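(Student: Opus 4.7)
The plan is to move the discrete difference in the left-hand side of \eqref{eq:conv_anis_scaling} onto the smooth test function via change of variables and then pass to the limit using the strong $L^2$ convergence $\nu_{n_k}\to\tilde\nu_\infty$ established in \eqref{eq:nunnuinfty}.

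As a preliminary, I would identify the target of convergence. Since $\gamma_\infty$ is a constant-speed $H^2$ curve, differentiating the identity $|\gamma_\infty'|^2 \equiv L^2(\gamma_\infty)$ yields $\gamma_\infty'' \perp \gamma_\infty'$ a.e., so that $\gamma_\infty'' = -\kappa_\infty L^2(\gamma_\infty) \tilde\nu_\infty$ a.e.\ by \eqref{eq:kappai}, where $\tilde\nu_\infty = -(\gamma_\infty')^\perp/L(\gamma_\infty)$ is the a.e.-defined outward unit normal obtained in Step 1 of the proof of Lemma \ref{lemma:compactness_single_curve}. Consequently, $\tilde\nu_\infty' = -(\gamma_\infty'')^\perp/L(\gamma_\infty) = \kappa_\infty \gamma_\infty'$ a.e., so the right-hand side of \eqref{eq:conv_anis_scaling} equals $s\,\tilde\nu_\infty'(t)/L(\gamma_\infty)$.

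For the convergence in $\mathcal D'(\Sph^1; \R^2)$, I would fix $s\in\R$ and $\varphi\in\mathcal D(\Sph^1;\R^2)$, set $h_k := (L(\gamma_{n_k})\lambda_{n_k})^{-1}$ so that $\lambda_{n_k} = (L(\gamma_{n_k})h_k)^{-1}$, and shift the integration variable using $1$-periodicity to obtain
\begin{align*}
  \int_0^1 \varphi(t)\cdot\lambda_{n_k}\bigl[\nu_{n_k}(t+h_k s)-\nu_{n_k}(t)\bigr]\intd t = \frac{1}{L(\gamma_{n_k})}\int_0^1 \frac{\varphi(t-h_k s)-\varphi(t)}{h_k}\cdot \nu_{n_k}(t)\intd t.
\end{align*}
Since $h_k\to 0$ and $\varphi$ is smooth, the difference quotient on the right converges uniformly to $-s\varphi'(t)$, while $\nu_{n_k}\to\tilde\nu_\infty$ strongly in $L^2(\Sph^1;\R^2)$ and $L(\gamma_{n_k})\to L(\gamma_\infty)>0$. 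Passing to the limit and integrating by parts gives $s\,L(\gamma_\infty)^{-1}\int_0^1 \varphi\cdot\tilde\nu_\infty'\intd t$, which matches the claim by the identification above.

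For the stronger convergence in $\mathcal D'(\Sph^1\times\R;\R^2)$, I would run the identical computation after Fubini with a test function $\varphi\in\mathcal D(\Sph^1\times\R;\R^2)$. Since $\varphi$ has compact support in $s$, say contained in $\Sph^1\times[-K,K]$, one has $|h_k s|\leq h_k K\to 0$ uniformly, so that $[\varphi(t-h_k s,s)-\varphi(t,s)]/h_k \to -s\,\partial_t\varphi(t,s)$ uniformly in $(t,s)$, and the previous argument applies verbatim. The only mildly delicate point in the plan is the $H^2$-level identification $\tilde\nu_\infty' = \kappa_\infty \gamma_\infty'$, which reduces to the Sobolev chain rule for $\gamma\mapsto (\gamma')^\perp/|\gamma'|$ under the constant-speed constraint; the remaining steps are a routine transfer of limits.
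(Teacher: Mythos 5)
Your proof is correct and follows essentially the same route as the paper: shift the finite difference onto the test function via periodicity, use the uniform convergence of the difference quotient together with the strong $L^2$ convergence $\nu_{n_k}\to\tilde\nu_\infty$ and $L(\gamma_{n_k})\to L(\gamma_\infty)>0$, then identify the resulting distributional derivative via $\tilde\nu_\infty'=\kappa_\infty\gamma_\infty'$. The only presentational difference is that you spell out the a.e.\ identification $\tilde\nu_\infty'=\kappa_\infty\gamma_\infty'$ for the constant-speed $H^2$ curve, whereas the paper simply invokes identity \eqref{eq:derivative_normal}, noting earlier that it remains valid a.e.\ for such curves.
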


For the proof of the lower bound proper, the convergence
\eqref{eq:conv_anis_scaling} ensures that the limiting object after
the anisotropic blowup is the expected parabola, while the second part
of Lemma \ref{lemma:discrete} allows us to work at points on which the
boundaries along the sequence are not too ill-behaved.  However, the
proof is somewhat heavy on standard, measure-theoretic
details. We recall the definition of $\hat F_{\infty,\sigma}$ in
  \eqref{eq:Fhat}.

\begin{prop}\label{prop:lower_bound}
  Let $\lambda_n \to \infty$ and $\alpha_n >\frac{1}{\sqrt{2\pi}}$ be
  sequences such that
  $\sigma_n = \lambda_n^2 \left( 1- \frac{1}{2\pi \alpha_n^2} \right)$
  satisfies
	\begin{align}
		\lim_{n\to \infty} \sigma_n = \sigma >0.
	\end{align}
	Let $\Omega_{n} \in \mathcal{A}_\pi$ for $n\in \N$ be
        regular sets such that
        $\chi_{n} \to \chi_{\Omega_\infty}$ in $L^1(\R^2)$ for
        $\Omega_\infty \in \mathcal{A}_\pi$ and such that 
	\begin{align}\label{eq:lower_bound_boundedness}
          \limsup_{n \to \infty} \lambda_n^2 F_{\lambda_n,
          \alpha_n} (\Omega_{n}) <\infty. 
	\end{align}
	Furthermore, let there exist $I \subset \N$ finite, and a
        family
        $\gamma_\infty := \{\gamma_{\infty,i}\}_{i\in I} \in
        G(\Omega_\infty)$ of $H^2$-regular, constant speed curves such
        that for all $i\in I$ we have
	\begin{align}
		\gamma_{n,i} \to \gamma_{\infty,i} \qquad \text{in} \
          H^1(\Sph^1; \R^2),
	\end{align}
        as $n \to \infty$, where $\gamma_{n,i}$ for $i \in I$ is some
      sub-collection of curves from the decomposition of $\Omega_{n}$
      into its boundary curves. Then 
	\begin{align}
		\hat F_{\infty,\sigma}(\gamma_\infty)  \leq \liminf_{n
          \to \infty} \lambda_n^2 F_{\lambda_n,\alpha_n} (\Omega_{\lambda_n}). 
	\end{align}
\end{prop}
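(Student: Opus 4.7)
The strategy is to apply the exact representation \eqref{eq:representation_revised}, multiplied by $\lambda_n^2$:
\begin{align*}
\lambda_n^2 F_{\lambda_n,\alpha_n}(\Omega_n) = \sigma_n P(\Omega_n) + \frac{1}{4\pi\alpha_n}\int_{\partial^*\Omega_n} J_n(y)\,\intd\Hd^1(y),
\end{align*}
where $J_n(y)$ denotes the inner anisotropically blown-up integral in \eqref{eq:representation_revised}. The perimeter satisfies $P(\Omega_n) \geq \sum_{i\in I} L(\gamma_{n,i})$, so by $H^1$-convergence of the parametrizations $\gamma_{n,i}\to\gamma_{\infty,i}$ we obtain $\liminf_n \sigma_n P(\Omega_n) \geq \sigma \sum_{i\in I} L(\gamma_{\infty,i})$, which accounts for the first term of $\hat F_{\infty,\sigma}(\gamma_\infty)$. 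For the non-local term, I restrict the $y$-integration to $\bigcup_{i\in I}\Gamma_{n,i}$, noting that the omitted parts of $\partial^*\Omega_n$ contribute non-negatively, and analyze each $i\in I$ separately.

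For fixed $i\in I$, parametrize $y=\gamma_{n,i}(t)$ and, in analogy with \eqref{def:g_upper}--\eqref{def:h_upper}, introduce the tangential and normal anisotropic blowup coordinates $g_{n,i}(s,t)$ and $h_{n,i}(s,t)$ along the curve. On the set of ``good'' $t$, where $g_{n,i}(\cdot,t)$ is invertible on the relevant window and the bad-set estimate \eqref{eq:weak_estimate} ensures that no other curve segment with mismatched normal lies close to $\gamma_{n,i}(t)$, the blown-up set $A_{\lambda_n}R_{\nu_n(y)}(\Omega_n - y)$ is the subgraph $\{z_2\leq h_{n,i}(g_{n,i}^{-1}(z_1),t)\}$ on a bounded $z_1$-window. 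Fubini's theorem in $z_2$ together with the bound $z_2^2/\lambda_n^2 \ll z_1^2$ for bounded $z_1$ then yield, for any $R>0$,
\begin{align*}
J_n(\gamma_{n,i}(t)) \geq \frac{1}{2}\int_{-R}^{R}\frac{h_{n,i}^2(g_{n,i}^{-1}(z_1),t)}{z_1^2}\, e^{-\alpha_n|z_1|}\,\intd z_1 - o_n(1).
\end{align*}
Integrating Lemma \ref{lemma:weak_curvature} once in $s$ yields the distributional convergence $h_{n,i}(s,t)\warr -\tfrac{1}{2}\kappa_{\infty,i}(t)s^2$, which upgrades to weak $L^2_{\mathrm{loc}}$-convergence in $s$ for a.e.\ $t$ thanks to uniform $H^1_s$-bounds on $h_{n,i}$ extracted from \eqref{eq:norms_boundedness}. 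Lower semicontinuity of the squared $L^2$-norm, Fatou's lemma in $t$, and the explicit computation $\int_\R s^2 e^{-\alpha_\infty|s|}\,\intd s = 4/\alpha_\infty^3$ with $\alpha_\infty = 1/\sqrt{2\pi}$ combine to give, in the limit $n\to\infty$ followed by $R\to\infty$, exactly the contribution $\tfrac{\pi}{2}\int_0^1\kappa_{\infty,i}^2\,|\gamma'_{\infty,i}|\,\intd t$ from the $i$-th curve. Summing over $i\in I$ completes the lower bound.

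The main obstacle is the joint measure-theoretic bookkeeping in the last step. In particular, one has to (a) identify the ``good'' $t$-set outside of which either the graphical representation of $\Omega_n$ near $\gamma_{n,i}(t)$ or the change of variables $z_1 = g_{n,i}(s,t)$ breaks down, with exceptional-set control coming from \eqref{eq:weak_estimate}; and (b) upgrade the distributional convergence of normal difference quotients in Lemma \ref{lemma:weak_curvature} to weak $L^2$-convergence of $h_{n,i}(\cdot,t)$, requiring $t$-averaged $H^1_s$-bounds obtained from \eqref{eq:norms_boundedness}. Both should be handled by standard truncation combined with Fatou and dominated convergence arguments, though the technical details will be substantial.
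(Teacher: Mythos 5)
Your proposal is correct and follows essentially the same route as the paper: starting from \eqref{eq:representation_revised}, restricting the non-local term to the curves indexed by $I$, passing to the anisotropic blowup coordinates $g_{n,i,t}$, $h_{n,i,t}$, using Lemma \ref{lemma:weak_curvature} to identify the limiting parabola, and concluding via Fatou, weak $L^2$-lower-semicontinuity and an explicit integral evaluation. You also correctly pinpoint the measure-theoretic ``good-set'' decomposition (invertibility of $g$ plus exclusion of nearby mismatched normals via \eqref{eq:weak_estimate}) and the upgrade of the distributional convergence to weak $L^2_t$-convergence of $h_{n,i,t}$ as the main technical burden --- this is exactly what the paper carries out with the sets $S^1$ through $S^4$ and repeated Borel--Cantelli passes to subsequences.
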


\begin{proof}[Proof of Lemma \ref{lemma:weak_curvature}]
  Let $s\in \R$.  Let $\xi \in C^\infty(\Sph^1)$ be a smooth, periodic
  test function parametrized by $t \in [0,1]$.  By the strong
  convergence of $\nu_n$ to $\nu_\infty$ obtained in Lemma
  \ref{lemma:compactness_single_curve} with the help of identity
    \eqref{eq:chosen_normal}, we get
	\begin{align}
	  \begin{split}
            & \quad \lim_{n_k \to \infty} \int_0^1 \lambda_{n_k}
            \left[ \nu_{n_k}\left(t+\frac{s}{L(\gamma_{n_k})
                  \lambda_{n_k}}\right) -
              \nu_{n_k}(t)\right] \xi (t)\intd t \\
            & = \lim_{k \to \infty} \int_0^1 \nu_{n_k}(t)
            \lambda_{n_k} \left[\xi\left(t-\frac{s}{L(\gamma_{n_k})
                  \lambda_{n_k} }\right)-
              \xi(t) \right]  \intd t\\
            & = - \int_0^1 \nu_\infty(t) \frac{s}{L(\gamma_\infty)}
            \partial_t \xi(t)\intd t.
	  \end{split}
	\end{align}
      Together with the fact that
      $ \nu_\infty' = \kappa_\infty \gamma_{\infty}'$ a.e., see
      identity \eqref{eq:derivative_normal}, we get the first desired
      convergence in \eqref{eq:conv_anis_scaling}.  To obtain the
      second, simply repeat the above argument after testing with a
      function $\xi \in C^\infty(\Sph^1\times \R)$ with compact
      support in the second variable.
\end{proof}

\begin{proof}[Proof of Proposition \ref{prop:lower_bound}]
	\textit{Step 1: Choose appropriate subsequences.}
	
	We first choose a subsequence (not relabeled) such that
        we may apply Lemma \ref{lemma:compactness_single_curve} to the
        sequences $\gamma_{n,i}$ for all $i \in I$ and such that
	\begin{align}\label{eq:series_finite}
		\sum_{n\in \N} \lambda_n^{-\frac{1}{2}} <\infty,
	\end{align}
        the latter being chosen to be able to subsequently apply the
        Borel-Cantelli lemma in Steps 3 and 4.
	
	Let $i \in I$. Furthermore, let us abbreviate
        $L(\gamma_{n,i})$ by $L_{n,i}$, and recall that
        $|\gamma'_{n,i}| = L_{n,i}$ everywhere.  For all $K \in \N$
        and $t\in [0,1]$ we define
	\begin{align}\label{def_Z_disint}
          Z_{K,n,i,t}:=  \left\{ y \in \partial \Omega \cap
          B_{K\lambda_n^{-1}}( \gamma_{n,i}(t)) : \nu(y) \cdot
          \nu(\gamma_{n,i}(t)) \leq 0 \right\}. 
	\end{align}
	Due to the estimate \eqref{eq:weak_estimate}, Fubini's
        theorem, and estimate \eqref{eq:FlaP}, we have for all
        $K\in \mathbb{N}$ and $i \in I$ that
	\begin{align}
          L_{n,i}  \int_0^1 \mathcal{H}^1(Z_{K,n,i,t})
          \intd t \leq C_K P(\Omega_n) F_{\lambda_n,
          \alpha}(\Omega_n) \leq \frac{C_K}{\sigma_n} \lambda_n^2
          F^2_{\lambda_n, 
          \alpha}(\Omega_n),
	\end{align}
	for some $C_K > 0$ depending only on $K$ and all $n$
          sufficiently large. Therefore, after taking yet another
        subsequence, by \eqref{eq:lower_bound_boundedness} we
        have for almost all $t\in [0,1]$, all $K \in \mathbb{N}$, and
        all $i \in I$ that
	\begin{align}\label{eq:measure_converge}
          \lim_{n \to \infty} \lambda_n^\frac{3}{2}
          \mathcal{H}^1(Z_{K,n,i,t})  = 0,
	\end{align}
	where the exponent $\tfrac32 < 2$ was chosen as
        sufficient to complement estimate \eqref{eq:series_finite}
        when passing from estimate \eqref{eq:sum_H} to estimate
        \eqref{eq:some_series_finite} in what follows.
	
	Let $n \in \N$ and $i \in I$.  For $s \in \R$, let
        $T_{n,i} s := (L_{n,i} \lambda_n)^{-1} s$, so that, as in the
        proof of Lemma \ref{lem:expansion}, the variable $s$ plays the
        role of a microscopic arc length parameter.  In analogy with
        the definitions \eqref{def:g_upper} and \eqref{def:h_upper},
        for $t\in [0,1]$ and $s \in \R$ we define
        $\tau_{n,i}(t) := \gamma'_{n,i} (t) / L_{n,i}$ and the
        functions
	\begin{align}
          g_{n,i,t}(s) & := \lambda_n
                         \tau_{n,i}(t) \cdot
                         \left(\gamma_{n,i}(t+ T_{n,i} s ) -
                         \gamma_{n,i}(t )\right), \label{eq:def_g}\\ 
          h_{n,i,t}(s) & := \lambda_n^2 \nu_{n,i}(t) \cdot
                         \left(\gamma_{n,i}(t +T_{n,i} s ) -
                         \gamma_{n,i}(t )\right), \label{eq:def_h}
	\end{align}
        giving
        \begin{align}\label{eq:g_short}
        	|g_{n,i,t}'(s)| \leq 1.
	\end{align}
	Then for $s\in \R$, we have
	\begin{align}
	  \begin{split}
            & \quad  \int_0^1 \left( 1 - g_{n,i,t}'(s) \right) \intd t\\
            & = \frac{1}{2L_{n,i}^2} \int_0^1
            \left(|\gamma'_{n,i}(t)|^2+ |\gamma'_{n,i}(t +
              T_{n,i}s)|^2 - 2\gamma_{n,i}'(t) \cdot \gamma'_{n,i}(t
              + T_{n,i}s )\right) \intd t \\
            & = \frac{1}{2L_{n,i}^2} \int_{0}^1 | \gamma_{n,i}' (t +
            T_{n,i}s) - \gamma_{n,i}' (t)|^2 \intd t,
	  \end{split}
	\end{align}
	so that together with \eqref{eq:lower_bound_boundedness}
        the bound \eqref{eq:norms_boundedness} implies
	\begin{align}\label{eq:gprime}
          \lim_{n \to \infty} \lambda_n^\frac{1}{2}  \int_0^1 \left( 1
          - g_{n,i,t}'(s) \right) \intd t = 0
	\end{align}
	locally uniformly in $s$, where the exponent $\tfrac12 <
          2$ is chosen to again complement estimate
          \eqref{eq:series_finite} to arrive at
          \eqref{eq:another_fucking_Borel_Cantelli} in what follows.

	Let $K \in \N$.  Using
        $g_{n,i,t}(0) =0$, integrating in $s$ and using Fubini's
        theorem, we obtain
	\begin{align}
          \lambda_n^\frac{1}{2} \int_0^1 \max_{s \in [-K,K]} | s -
          g_{n,i,t}(s) | \intd t \to 0 
	\end{align}
	in the limit $n \to \infty$.  Passing to a subsequence, for
        all $i \in I$ and almost all $t \in [0,1]$ we get
	\begin{align}\label{eq:conv_tangents}
		\lambda_n^\frac{1}{2} \max_{s \in [-K,K]} | s - g_{n,i,t}(s) | \to 0,
	\end{align}
	and together with the summability \eqref{eq:series_finite}
        that 
	\begin{align}\label{eq:another_fucking_Borel_Cantelli}
		\sum_{n\in \N}  \max_{s \in [-K,K]} | s - g_{n,i,t}(s) | < \infty.
	\end{align}	

	Additionally, for $t\in [0,1]$ and all $s \in \R$ we
        calculate
	  \begin{align}
           \begin{split}
             h'_{n,i,t}(s)& = \frac{\lambda_n}{L_{n,i}} \nu_{n,i}(t)
             \cdot \gamma'_{n,i}(t +T_{n,i} s )
              = -\lambda_n \frac{ \gamma_{n,i}'(t)}{L_{n,i}} \cdot (
             \nu_{\lambda_n}(t +T_{n,i} s) - \nu_{n,i}(t))
           \end{split}
	\end{align}
        Consequently, by \eqref{eq:lower_bound_boundedness} and
        Lemma \ref{lemma:discrete} we have
	\begin{align}
          \sup_{n\in \N} \sup_{s\in [-K,K]} \int_{0}^{1}
          |h'_{n,i,t}(s)|^2 \intd t < \infty, 
	\end{align}
	and we get from convergence \eqref{eq:conv_anis_scaling}, a
        weak-times-strong argument, and $|\gamma_{n,i}'| = L_{n,i}$
        that
         \begin{align}\label{eq:conv_double}
           \begin{split}
             h'_{n,i,t}(s)            \warr- s
             \kappa_{\infty,i}(t)
           \end{split}
	\end{align}
	in $L_t^2(0,1)$ for all $s\in \R$, as well as in
        $L_{t,s}^2((0,1)\times (-K,K) )$. Here and in the following,
        the subscripts in the notation for Lebesgue spaces denote the
        variables in which the integration is performed.
   
	Again, using $h_{n,i,t}(0) = 0$ we may apply the
        fundamental theorem of calculus in $s$ and Jensen's inequality
        to obtain 
	\begin{align}\label{eq:est_h}
	\begin{split}
          \sup_{n\in \N} \int_{0}^{1} \sup_{s \in [-K,K]}
          \frac{|h_{n,i,t}(s)|^2}{|s|} \intd t & \leq \sup_{n\in \N}
          \int_{0}^{1} \int_{-K}^{K}
          |h'_{n,i,t}(s)|^2 \intd s  \intd t \\
          & \leq 2 K \sup_{n\in \N}\sup_{s\in [-K,K]}
          \int_{0}^{1}  |h'_{n,i,t}(s)|^2 \intd t \\
          & < \infty.
         \end{split}
	\end{align}
	For all $s \in \R$ and $\xi \in L^2(0,1)$, we also get  
	\begin{align}
          \int_0^1 h_{n,i,t}(s)\xi(t) \intd t = \int_0^s
          \int_0^1 h'_{n,i,t}(s')\xi(t) \intd t \intd s' \to -
          \int_0^1  \frac{s^2}{2} 
          \kappa_{\infty,i}(t) \xi(t) \intd t, 
	\end{align}
	from the weak $L_{t,s}^2((0,1)\times (-K,K) )$
        convergence \eqref{eq:conv_double}.  Therefore,
        for all $s \in \R$ we have
	\begin{align}\label{eq:h_weak}
		 h_{n,i,t}(s)\warr - \frac{s^2}{2}
          \kappa_{\infty,i}(t) 
	\end{align}
	in $L_t^2(0,1)$.  Furthermore, estimate \eqref{eq:est_h}
        implies for all $i \in I$ and $K \in \N$ that
	\begin{align}\label{eq:normal_parts_bound}
          \lambda_n^{-\frac{1}{4}} \max_{s\in [-K,K]} |h_{n,i,t}(s)| \to 0
	\end{align}
	in $L_t^2 (0,1)$ as $n \to \infty$ and thus also for almost
        all $t\in [0,1]$ after passage to one more, final
        subsequence. Here, we chose the exponent $ - \frac{1}{4} < 0$,
        so that we can later deduce the convergence
        \eqref{eq:random_exponent_explained} from the estimate
        \eqref{eq:gprime}.

        \textit{Step 2: Given $i\in I$, identify sets over which the
          curve $\gamma_{n,i}$ is locally a graph over its
          tangent space for sufficiently large $n \in \N$.}

	Let $i \in I$ and $K \in \N$ be fixed throughout this step of
        the proof.
	
        For every $n \in \N$ and $t\in [0,1]$, we consider the
        sets
	\begin{align}\label{eq:defS1}
          S^1_{K,n,i,t}& := \left\{ s \in [-K,K]: |g_{n,i,t}(s)| \geq  K^{-1}\right\},\\
          S^2_{K,n,i,t} & := \left\{ s  \in  [-K,K]:
                          |h_{n,i,t}(s)| \leq
                          \lambda_n^{\frac{1}{4}}\right\}. \label{eq:defS2} 
	\end{align}
	The set $S^1_{K,n,i,t}$ cuts away the origin, while the set
        $S^2_{K,n,i,t}$ makes sure that $h_{n,i,t}$ is not too large,
        see Figure \ref{fig:rough_1}.
        
        	\begin{figure}
		\centering
		\includegraphics{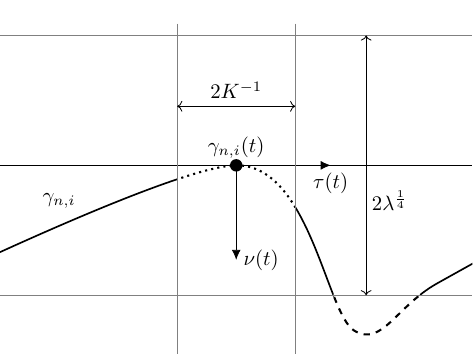}
		\caption{\label{fig:rough_1} Sketch of
                  $\gamma_{n,i}(t+T_\lambda s)$ on
                  $ S^1_{K,n,i,t} \cap S^2_{K,n,i,t}$. The dotted part
                  of $\gamma_{n,i}$ is outside $S^1_{K,n,i,t}$, the
                  dashed part is outside of $S^2_{K,n,i,t}$. }
               	\end{figure}	

                We will want to consider $g_{n,i,t}(s)$ as a
                parametrization for $\gamma_{n,i}$ around its own
                tangent line at $\gamma_{n,i}(t)$. However, there is
                no reason why it should be injective, see Figure
                \ref{fig:rough_2}.  Somewhat abusing notation, we
                therefore define the generalized inverse of
                $g_{n,i,t}$ for $n \in \N$, $t \in [0,1]$, and
                $z_1 \in g_{n,i,t} ([-K,K])$ as
        	\begin{align}
		g^{-1}_{n,i,t}(z_1) :=
		\begin{cases}
                  \inf \{ s' \in [0,K]: g_{n,i,t}(s')\geq z_1 \} &
                  \text{ if } z_1\geq0, \\ 
                  \sup\{ s' \in [-K,0] : g_{n,i,t}(s') \leq z_1 \}
                  & \text{ if } z_1 < 0
		\end{cases}
	\end{align}
	to be left-continuous for positive $s$ and right-continuous
        for negative $s$.  By continuity of $g_{n,i,t}$, we indeed
        have $g_{n,i,t} \circ g^{-1}_{n,i,t}(z_1) = z_1$ for all
        $n \in \N$, $t\in [0,1]$, and
        $z_1 \in g_{n,i,t} ([-K,K])$.  While we may not have
        $g_{n,i,t}^{-1} \circ g_{n,i,t}(s) = s$ for all $s\in [-K,K]$,
        this does hold for all $n \in \N$, $t\in [0,1]$ on
        \begin{align}
          \begin{split}
            S^3_{K,n,i,t} & := \left\{ s \in [0,K]: g_{n,i,t}( s') <
              g_{n,i,t}(s)\, \forall s' \in
              [0,s)\right\} \\
            & \qquad \cup \left\{ s \in [-K,0]: g_{n,i,t}( s') >
              g_{n,i,t}(s)\, \forall s' \in ( s,0]\right\},
         \end{split}
	\end{align}
	by construction. As a result of $g_{n,i,t}(0) = 0$, we have
        $\mathrm{sgn} (g_{n,i,t}(s)) = \mathrm{sgn}(s)$ for all
        $s \in S^3_{K,n,i,t}$.
	
	  	\begin{figure}
		\centering
		\includegraphics{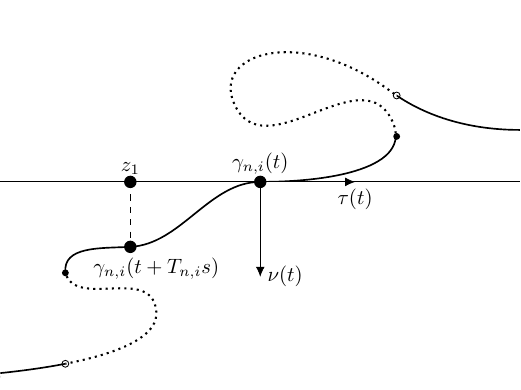}
		\caption{\label{fig:rough_2} Sketch of
                  $\gamma_{n,i}(t+T_\lambda s)$ on $
                  S^3_{K,n,i,t}$. Parts of $\gamma_{n,i}$ outside of
                  $S^3_{K,n,i,t}$ are shown as dotted. The solid
                  points signify the endpoints contained in
                  $S^3_{K,n,i,t}$. For $z_1 \in \R$, the inverse
                  $g_{n,i,t}^{-1}(z_1)$ is the microscopic arc length
                  parameter $s \in S^3_{K,n,i,t}$ such that
                  $g_{n,i,t}(s) = z_1$. }
	\end{figure}

	Finally, when comparing $\Omega$ with its tangent half-plane,
        we have to contend with the possibility of small holes in
        $\Omega$ or small pieces of $\Omega$ lying between
        $\gamma_{n,i}$ and its tangent line at $t$, see Figure
        \ref{fig:rough_3}. Therefore, we also consider the
        parametrized line segment
         \begin{align}\label{eq:line}
           l_{n,i,t,s}(r) :=  g_{n,i,t}(s) e_1 + r h_{n,i,t}(s) e_2
        \end{align}
        for $r\in [0,1]$ and
         the set
	\begin{align}
	\begin{split}\label{eq:s_4}
          S^4_{K,n,i,t}& := \Big\{ s \in[-K,K]: \forall r \in (0,1) : \\
          & \qquad \qquad l_{n,i,t,s}(r) \in
          \left(\overline{H_-^0(e_2)}\setminus A_{\lambda_n}
            R_{\nu_{n,i} (t) } \left(\Omega_{n} -
              \gamma_{n,i}(t )\right) \right) \\
          & \qquad \qquad \qquad \qquad \qquad \cup
          \left(A_{\lambda_n} R_{\nu_{n,i} (t) }
            \left(\overline{\Omega_{n}} - \gamma_{n,i}(t
              )\right)\setminus H_-^0(e_2) \right) \Big\},
	  \end{split}
	\end{align}
	which rules out this pathological behaviour. Notice that
          by definition we have
          $l_{n,i,t,s}(0) \in \partial H_-^0(e_2)$, while
          $l_{n,i,t,s}(1) \in A_{\lambda_n} R_{\nu_{n,i}(t)} (\partial
          \Omega_n - \gamma_{n,i}(t))$.
		
		  	\begin{figure}
		\centering
		\includegraphics{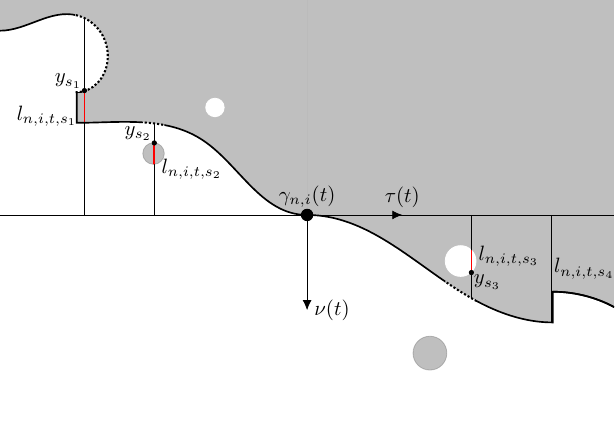}
		\caption{\label{fig:rough_3} Sketch of
                  $\gamma_{n,i}(t+T_\lambda s)$ on $
                  S^4_{K,n,i,t}$. The parts of $\gamma_{n,i}$ not
                  along $ S^4_{K,n,i,t}$ are shown as dotted, the set
                  $\Omega$ is shown in gray. We have
                  $0,s_4 \in S^4_{K,n,i,t}$, but
                  $s_1,s_2,s_3 \not \in S^4_{K,n,i,t}$. The
                  corresponding sets $D_{s_1}$, $D_{s_2}$ and
                  $D_{s_3}$ are shown as red lines, while
                  $y_{s_1}$, $y_{s_2}$ and $y_{s_3}$ are shown as
                  small dots. }
	\end{figure}

	Let
	\begin{align}
		S_{K,n,i,t} & := S^1_{K,n,i,t} \cap S^2_{K,n,i,t} \cap S^3_{K,n,i,t}\cap S^4_{K,n,i,t}.
	\end{align}
	Additionally, we define the sets $ S_{K,\infty,i,t}$ and
        $S^j_{K,\infty,i,t}$ for $j =1,2,3,4$ as
	\begin{align}\label{def:S}
          S_{K,\infty,i,t}& := \bigcup_{n \in \N} \bigcap_{n' \in \N: n' >n}  S_{K,n',i,t},\\
          S^j_{K,\infty,i,t}& := \bigcup_{n \in \N} \bigcap_{n' \in
                              \N: n' >n}
                              S^j_{K,n',i,t}, \label{def:S_j} 
	\end{align}
	being the sets of points that for sufficiently large $n$ lie
        in all sets $S_{K,n',i,t}$, resp., $S^j_{K,n',i,t}$ for
        $n' \geq n$.  and observe the decomposition
	\begin{align}\label{eq:decomp_s}
          S_{K,\infty,i,t} = S^1_{K,\infty,i,t} \cap
          S^2_{K,\infty,i,t} \cap S^3_{K,\infty,i,t} \cap
          S^4_{K,\infty,i,t}. 
	\end{align}
	
	The convergence \eqref{eq:conv_tangents} states that for all
        $i\in I$, almost all $t\in [0,1]$, and all $s\in [-K,K]$,
        we have $g_{n,i,t}(s) \to s$ as $n \to \infty$.  Therefore,
        for such $t\in [0,1]$, all $s \in S^1_{K,\infty,i,t}$
        satisfy $s \in S^1_{K,n,i,t}$ for $n\in \N$ sufficiently
        large, giving
        $|s| = \lim_{n \to \infty} |g_{n,i,t}(s)| \geq K^{-1}$.
        Similarly, if $|s| > K^{-1}$, then we have
        $|g_{n,i,t}(s)| > K^{-1}$ for $n \in \N$ sufficiently large,
        giving $s \in S^1_{K,\infty,i,t}$.  Consequently, we have
	\begin{align}\label{eq:S1_full}
          |S^1_{K,\infty,i,t} \Delta \left( [-K,K] \setminus
          [-K^{-1},K^{-1}]\right)| = 0 
	\end{align}
	for almost all $t \in [0,1]$.  By the convergence
        \eqref{eq:normal_parts_bound}, for almost all $t \in [0,1]$ we
        have
	\begin{align}\label{eq:S2_full}
	  \begin{split}
            & \quad[-K,K] \setminus S^2_{K,\infty,i,t} \\\
            & = \bigcap_{n>0} \bigcup_{n' \in \N: n' >n} \left\{ s \in
              [-K,K]: \lambda_{n'}^{-\frac{1}{4}}|h_{n',i,t}|(s) >
              1 \right\}  \\
            &= \emptyset.
	  \end{split}
	\end{align}

	\textit{Step 3: For all $K \in \N$ and $i \in I$, we prove
          $|[-K,K]\setminus S^3_{K,\infty,i,s}| =0$, i.e., we may
          invert $g_{n,i,t}$ almost everywhere for sufficiently large
          $n$.}

	Let $n \in \N$ and $t \in [0,1]$.  We decompose
        $[-K, K] \setminus {S^3_{K,n,i,t}}$ into its at most countably
        many connected components in the following way: We claim that
        there exists a set $J \subset \N$ and $0 < a_j < b_j \leq K$
        for $j \in J$ such that the sets $(a_j,b_j]$ are pairwise
        disjoint and
	\begin{align}
          \label{eq:S3decomp}
	 	 [0,K] \setminus S^3_{K,n,i,t} = \bigcup_{j\in J} (a_j, b_j].
	\end{align}
	Here, the half-open intervals are a result of $g_{n,i,t}(s)$
        being left-continuous for $s \geq 0$ and the definition of
        $S^3_{K,n,i,t}$.

	Let $s \in [0,K] \setminus S^3_{K,n,i,t} $.
	Let 
	\begin{align}
          a_s & := \min\left\{  s' \in [0,s] : g_{n,i,t}( s') =
                \max_{[0,s]} g_{n,i,t}\right\}, 
	\end{align}
	which exists by continuity of $g_{n,i,t}$.  Furthermore, let
	\begin{align}
          b_s & := \max\left\{  s' \in [s,K] : g_{n,i,t}( r)
                \leq  \max_{[0,s]} g_{n,i,t}\,   \forall r \in
                [s,s'] \right\},
	\end{align}
	which also exists by continuity of $g_{n,i,t}$.  By definition
        of $S^3_{K,n,i,t}$, we have $a_s < s \leq b_s$ and
        $a_s \in S^3_{K,n,i,t}$.  For all $\tilde s \in (a_s,s]$ we
        have $ g_{n,i,t}(\tilde s) \leq g_{n,i,t}(a_s)$ by definition
        of $a_s$, while for $\tilde s \in (s,b_s]$ the same holds by
        definition of $b_s$. Thus, for all
          $\tilde s \in (a_s,b_s]$ we have
          $\tilde s \not \in S^3_{K,n,i,t}$ and
	\begin{align}\label{eq:bs}
		 \max_{[0,s]} g_{n,i,t} = g_{n,i,t}( a_s) =
          \max_{[0,\tilde s]} g_{n,i,t}.
	\end{align}
	Therefore, we have
	\begin{align}
		a_s & = \min\left\{  s' \in [0,\tilde s] : g_{n,i,t}(
                      s') =  \max_{[0,\tilde s]} g_{n,i,t}\right\} =
                      a_{\tilde s}. 
	\end{align}
	In particular, applying the identity \eqref{eq:bs} for $\tilde
        s=b_s$ gives $g_{n,i,t}( a_s) = \max_{[0,b_s]} g_{n,i,t}$.
        Conversely, by definition of $b_s$ for every $\eps>0$ there
        exists $r\in (b_s,b_s+\eps)$ such that we have
        $g_{n,i,t}(r) > \max_{[0,b_s]} g_{n,i,t}$.  Consequently, we
        have
	\begin{align}
          b_s & = \max\left\{  s' \in [\tilde s,K] : g_{n,i,t}( r)
                \leq  \max_{[0,\tilde s]} g_{n,i,t} \, \forall r \in
                [\tilde s ,s'] \right\}.
	\end{align}
	Thus, the sets $(a_s, b_s]$ for
        $s \in [0,K] \setminus S^3_{K,n,i,t}$ provide the
          decomposition of $[0,K] \setminus S^3_{K,n,i,t}$ into
        connected components.  As each half-open interval must contain
        at least one rational number, there may be at most
        countably many pairwise disjoint, connected
        components. This proves the claim, yielding the
          decomposition in \eqref{eq:S3decomp}.
	
          By construction, for all $j \in J$ we have
          $g_{n,i,t}(b_j) = g_{n,i,t}(a_j)$, unless $b_j = K$, in
          which case we only have
          $g_{n,i,t}(b_j) \leq g_{n,i,t}(a_j)$.  Therefore, we have
	\begin{align}
		| [a_j,b_j] | \leq  b_j  - a_j + g_{n,i,t}(a_j) - g_{n,i,t}(b_j).
	\end{align}
	In total, we consequently get
	\begin{align}
		\left|  [0,K] \setminus S^3_{K,n,i,t}\right| \leq
          \sum_{j\in J} \left| \left( b_j - g_{n,i,t}(b_j) - \left(
          a_j - g_{n,i,t}(a_j)\right) \right) \right|. 
	\end{align}
	By an approximation argument using $\tilde J \subset J$
        finite, we get
	\begin{align}
          \left|  [0,K] \setminus S^3_{K,n,i,t} \right|  \leq
          TV_{[0,K]}(s -g_{n,i,t}(s)),  
	\end{align}
	where the latter is the one-dimensional total variation of the
        function $s \mapsto s -g_{n,i,t}(s)$ for $s \in [0,K]$.  Due
        to
        \eqref{eq:g_short}, this map is non-decreasing, and therefore
	\begin{align}
          \left|   [0,K] \setminus S^3_{K,n,i,t}\right|  \leq
         K - g_{n,i,t}(K). 
	\end{align}
	
	An analogous argument works for
        $[-K,0] \setminus S^3_{K,n,i,t}$, giving in total
	\begin{align}
          \left| [-K,K] \setminus S^3_{K,n,i,t} \right| \leq 2 K
          - g_{n,i,t}(K) + g_{n,i,t}(-K). 
	\end{align}
	Together with the summability of this expression that
          follows from \eqref{eq:another_fucking_Borel_Cantelli}, the
        Borel-Cantelli lemma implies
	\begin{align}\label{eq:S3_full}
          |[-K,K]\setminus S^3_{K,\infty,i,s}| =0.
	\end{align}
	
	\textit{Step 4: For all $K \in \N$ and $i \in I$, we prove
          that $S^4_{K,\infty,i,t}$ has full measure.}
	
        For $t\in [0,1]$, let
        $s \in S^2_{K,n,i,t} \setminus S^4_{K,n,i,t}$ for some
        $n \in \N$.  In particular, we have $h_{n,i,t}(s) \neq 0$.  As
        the argument in this step is most naturally done in the
        original coordinates, we define
        \begin{align}\label{eq:repr_line}
        \begin{split}
          \tilde l_{n,i,t,s}(r) & := \gamma_{n,i}(t) +
          R_{\nu_{n,i}(t)}^T
          A_{\lambda_n}^{-1} l_{n,i,t,s}(r) \\
          & = \gamma_{n,i}(t) + \lambda_n^{-1} g_{n,i,t}(s)
          \tau_{n,i}(t) + r \lambda_n^{-2} h_{n,i,t}(s) \nu_{n,i}(t).
	\end{split}
      \end{align} 
      for $r\in [0,1]$. 
      Recalling the definitions \eqref{eq:s_4} and
      \eqref{def:half-space}, the set
        \begin{align}
        \begin{split}
          D_s & := \Big\{ r \in (0,1) : l_{n,i,t,s}(r) \not \in
          \left(\overline{H_-^0(e_2)}\setminus A_{\lambda_n}
            R_{\nu_{n,i} (t) } \left(\Omega_{n} -
              \gamma_{n,i}(t )\right) \right) \\
          & \qquad \qquad \qquad \qquad \qquad \qquad \cup
          \left(A_{\lambda_n} R_{\nu_{n,i} (t) }
            \left(\overline{\Omega_{n}} - \gamma_{n,i}(t
              )\right)\setminus H_-^0(e_2) \right) \Big\},
	  \end{split}
      	\end{align}
	see Figure \ref{fig:rough_3}, is non-empty.  Therefore, we
        have
          \begin{align}
            r_{\mathrm{max}, s} & :=  \sup D_{s} \in (0, 1].
         \end{align}
         
         By
         $ \left(\tilde l_{n,i,t,s}(r) - \gamma_{n,i}(t) \right) \cdot
         \nu_{n,i}(t) = r \lambda^{-2}_n h_{n,i,t}(s)$, for all
         $r \in (0,1]$ we have
        \begin{align}\label{eq:signs_1}
        	\operatorname{sgn}  \left(\tilde l_{n,i,t,s}(r) -
          \gamma_{n,i}(t) \right) \cdot \nu_{n,i}(t) =
          \operatorname{sgn} h_{n,i,t}(s) \neq 0.
	\end{align}
	Consequently, for all $r \in (0,1]$ we have either
        $\tilde l_{n,i,t,s}(r) \in H_-(\gamma_{n,i}(t))$ or
        $\tilde l_{n,i,t,s}(r) \in \R^2 \setminus
        \overline{H_-(\gamma_{n,i}(t))}$.  Thus if
        $ r_{\mathrm{max},s} < 1$, we have
        $y_{s} := \tilde l_{n,i,t,s}( r_{\mathrm{max},s} ) \in
        \partial \Omega_{n} $.  If $ r_{\mathrm{max},s} = 1$, by the
        representation \eqref{eq:repr_line} and the definitions
        \eqref{eq:def_g} and \eqref{eq:def_h} we have
        $ \tilde l_{n,i,t,s}( r_{\mathrm{max},s}) \in \partial
        \Omega_n$.  At the same time, a direct computation gives
       \begin{align}
         \partial_r \left( \nu(y_{s}) \cdot \tilde l_{n,i,t,s}(r)\right)
         = \lambda_n^{-2} \nu(y_{s})\cdot \nu_{n,i}(t) h_{n,i,t}(s). 
       \end{align}

       Let us consider the case
       $\tilde l_{n,i,t,s}(r) \in H_-(\gamma_{n,i}(t))$ for all
       $r\in (0,1]$.  Then by identity \eqref{eq:signs_1}, we have
       $h_{n,i,t}(s) <0$, so that
       \begin{align}\label{eq:signs_something}
         \operatorname{sgn} 	\partial_r \left( \nu(y_{s}) \cdot
         \tilde l_{n,i,t,s}(r)\right) = - 	\operatorname{sgn}
         \nu(y_{s})\cdot \nu_{n,i}(t). 
       \end{align}
       For $r \in D_{s}$, we furthermore get
       $\tilde l_{n,i,t,s}(r) \in \Omega_n$.  As $\Omega_n$ is regular
       and $\nu(y_{s})$ is the outer unit normal to $\Omega_n$, there
       exists $\eps>0$ such that
       $\tilde l_{n,i,t,s}(r) \in \overline{H_-(y_{s})}$ for
       $r \in (r_{\mathrm{max},s}-\eps, r_{\mathrm{max},s})$.
       Consequently, linearity of $\tilde l_{n,i,t,s}(r)$ as a
       function of $r$ gives
       \begin{align}
         \partial_r \left( \nu(y_{s}) \cdot \tilde l_{n,i,t,s}(r)\right) \geq 0.
       \end{align}
       so that \eqref{eq:signs_something} implies
       $\nu(y_{s})\cdot \nu_{n,i}(t) \leq 0$.
       
       In the case
       $\tilde l_{n,i,t,s}(r) \in \R^2 \setminus
       \overline{H_-(\gamma_{n,i}(t))}$ for all $r\in (0,1]$, we
       instead have $h_{n,i,t}(s) >0$ and
        \begin{align}\label{eq:signs_something_2}
          \operatorname{sgn} 	\partial_r \left( \nu(y_{s}) \cdot \tilde
          l_{n,i,t,s}(r)\right) =  	\operatorname{sgn}
          \nu(y_{s})\cdot \nu_{n,i}(t). 
       \end{align}
       Additionally, we have
       $\tilde l_{n,i,t,s}(r) \not \in \overline{\Omega_n}$ for all
       $r \in D_s$.  Therefore, we have
       $\tilde l_{n,i,t,s}(r) \not \in H_-(y_{s})$ for all
       $r \in (r_{\mathrm{max},s}-\eps, r_{\mathrm{max},s})$ and
       $\eps>0$ small enough.  As a result, we also have
       \begin{align}
         \partial_r \left( \nu(y_{s}) \cdot \tilde l_{n,i,t,s}(r)\right) \leq 0,
       \end{align}
       resulting in $\nu(y_{s})\cdot \nu_{n,i}(t) \leq 0$.

       Furthermore, as a result of
       \eqref{eq:repr_line}, \eqref{eq:g_short} and
       $s \in S^2_{K,n,i,t}$ we have
        \begin{align}
        	 |y - \gamma_{n,i}(t)| \leq 
          \lambda_n^{-1} |s| + \lambda_n^{-2} |h_{n,i,t}(s)| \leq  K \lambda_n^{-1} +
          \lambda_n^{-\frac{7}{4}}. 
        \end{align}
        For $n \in \N$ sufficiently big, we thus have
        $y \in Z_{2K,n,i,t}$, see the definition in
        \eqref{def_Z_disint}.  Additionally, representation
        \eqref{eq:repr_line} implies
        \begin{align}
        		p(y) = g_{n,i,t}(s),
        \end{align}
        where
        $p(y) := \lambda_n \left(y - \gamma_{n,i}(t) \right) \cdot
        \tau_{n,i}(t)$ for $y \in \R^2$.
    
        The above can therefore be compiled into the statement
        \begin{align}
          g_{n,i,t}\left(S^2_{K,n,i,t}\setminus S^4_{K,n,i,t} \right)
          \subset p(Z_{2K,n,i,t}), 
        \end{align}
        so that $p$ being a $\lambda_n$-Lipschitz map ensures
        \begin{align}\label{eq:rewritten}
          \left| g_{n,i,t} \left(S^2_{K,n,i,t}\setminus S^4_{K,n,i,t}
          \right) \right|   \leq |p(Z_{2K,n,i,t})|  \leq \lambda_n
          \mathcal{H}^1(Z_{2K,n,i,t}). 
        \end{align}
        As $g_{K,n,i,t}$ is injective on $S^3_{K,n,i,t}$, the
        coarea-formula thus gives
        \begin{align}
          \label{eq:S234H1}
          \begin{split}
            \int_{\left(S^2_{K,n,i,t} \cap S^3_{K,n,i,t} \right)
              \setminus S^4_{K,n,i,t} } \left |g_{n,i,t}' \right|(s)
            \intd s & \leq \left| g_{n,i,t} \left(\left(S^2_{K,n,i,t}
                  \cap S^3_{K,n,i,t} \right) \setminus S^4_{K,n,i,t}
              \right) \right| \\
            & \leq \lambda_n \mathcal{H}^1(Z_{2K,n,i,t}).
	  \end{split}
        \end{align}
        
        Let
        $A_{K,n,i,t}:= \{ s \in [-K,K] : |g_{n,i,t}'(s)| \leq
        \frac{1}{2K} \}$. Then from \eqref{eq:S234H1} we have the
      estimate
        \begin{align}\label{eq:sum_H}
          \begin{split}
            \left| \left(S^2_{K,n,i,t} \cap S^3_{K,n,i,t} \right)
              \setminus S^4_{K,n,i,t} \right| & = \left| \left(
                \left(S^2_{K,n,i,t} \cap S^3_{K,n,i,t} \right)
                \setminus S^4_{K,n,i,t} \right) \setminus A_{K,n,i,t}\right| \\
            & \qquad + \left| \left( \left(S^2_{K,n,i,t} \cap
                  S^3_{K,n,i,t} \right)
                \setminus S^4_{K,n,i,t} \right) \cap A_{K,n,i,t} \right|  \\
            & \leq 2K \int_{\left( \left(S^2_{K,n,i,t} \cap
                  S^3_{K,n,i,t} \right) \setminus S^4_{K,n,i,t}
              \right) \setminus A_{K,n,i,t} } \left |g_{n,i,t}'
            \right|(s) \intd s \\
            & \qquad + \left| \left( \left(S^2_{K,n,i,t} \cap
                  S^3_{K,n,i,t} \right)
                \setminus S^4_{K,n,i,t} \right) \cap A_{K,n,i,t} \right| \\
            & \leq 2 K \lambda_n \mathcal{H}^1(Z_{2K,n,i,t}) +
            |A_{K,n,i,t}|.
         \end{split}
        \end{align}        
        On the other hand, using \eqref{eq:g_short} we may
        compute
        \begin{align}
          \left( 1- {1 \over 2 K} \right) |A_{K,n,i,t}| \leq
          \int_{A_{K,n,i,t}} \left( 1- 
          g_{n,i,t}'(s) \right) \intd s \leq \max_{s \in [-K,K]}
          | s - g_{n,i,t}(s)| . 
        \end{align}
        Therefore, combining this with estimates
          \eqref{eq:sum_H}, \eqref{eq:series_finite},
        \eqref{eq:measure_converge}, and
        \eqref{eq:another_fucking_Borel_Cantelli} gives
        \begin{align}\label{eq:some_series_finite}
          \sum_{n\in \N} \left| \left(S^2_{K,n,i,t} \cap
          S^3_{K,n,i,t} \right) \setminus S^4_{K,n,i,t}\right| <
          \infty. 
        \end{align}
       Again, we employ the Borel-Cantelli lemma to get
       \begin{align}
         \label{eq:BCS234}
          \left| \bigcap_{n\in \N }\bigcup_{n'\in \N: n' >n }
          \left(S^2_{K,n',i,t} \cap  S^3_{K,n',i,t} \right) \setminus
          S^4_{K,n',i,t}  \right| =0.  
	\end{align}
        
	If
        $s \in \left( S^2_{K,\infty,i,t} \cap S^3_{K,\infty,i,t}
        \right) \setminus S^4_{K,\infty,i,t} $, by definition
        \eqref{def:S_j} we have
        $s \in S^2_{K,n',i,t} \cap S^3_{K,n',i,t}$ for sufficiently
        big $n \in \N$ and all $n' \in \N$, $n'\geq
        n$. Conversely, for all $n \in \N$ there exists
        $n' \in \N$ with $n' \geq n$ such that
        $s \not\in S^4_{K,n',i,t}$. In particular, for all $n \in \N$
        sufficiently big there exists $n' \in \N$ with
        $n' \geq n$ such that
        $ s\in \left(S^2_{K,n',i,t} \cap S^3_{K,n',i,t} \right)
        \setminus S^4_{K,n',i,t}$. Therefore, from \eqref{eq:BCS234}
        we obtain
	\begin{align}\label{eq:remove_s_4}
		\left|  \left( S^2_{K,\infty,i,t} \cap
          S^3_{K,\infty,i,t} \right)  \setminus S^4_{K,\infty,i,t}
          \right| =0. 
	\end{align}
	
	Finally, computing the set inclusions 
	\begin{align}
	  \begin{split}
            & \quad S_{K,\infty,i,t} \Delta \left( [-K,K] \setminus
              [-K^{-1},K^{-1}]\right) \\ 
            & \subset S^1_{K,\infty,i,t}  \setminus  \left( [-K,K]
              \setminus [-K^{-1},K^{-1}]\right) \cup \bigcup_{j=1}^4
            \left( [-K,K] \setminus [-K^{-1},K^{-1}]\right) \setminus
            S^j_{K,\infty,i,t}\\ 
            & \subset S^1_{K,\infty,i,t}  \Delta  \left( [-K,K]
              \setminus [-K^{-1},K^{-1}]\right) \\ 
            & \qquad \cup \left( [-K,K] \setminus
              S^2_{K,\infty,i,t}\right) \cup \left( [-K,K] \setminus
              S^3_{K,\infty,i,t}\right) \\ 
            & \qquad \cup \left( S^2_{K,\infty,i,t} \cap
              S^3_{K,\infty,i,t} \right) \setminus S^4_{K,\infty,i,t}, 
	  \end{split}
	\end{align}
	we get from the identities \eqref{eq:decomp_s},
        \eqref{eq:S1_full}, \eqref{eq:S2_full}, \eqref{eq:S3_full},
        and \eqref{eq:remove_s_4} that
	\begin{align}\label{eq:S_total}
          \left|S_{K,\infty,i,t} \Delta \left( [-K,K] \setminus
          [-K^{-1},K^{-1}]\right)\right| = 0. 
	\end{align}

\textit{Step 5: For all $i \in I$, prove
	\begin{align}\label{eq:lower_bound_one_curve}
	  \begin{split}
            & \quad \liminf_{n \to \infty} \int_0^1 \int_{ H^0_-(e_2)
              \Delta A_{\lambda_n} R_{\nu_{n,i}(t)} (\Omega_n -
              \gamma_{n,i}(t)) } \left| z_2 \right|
            \frac{e^{-\alpha_n\sqrt{z_1^2 + \frac{z_2^2}{\lambda_n^2}
                }}}{z_1^2 + \frac{z_2^2}{\lambda_n^2}} \intd^2 z \intd t\\
            & \geq 2^\frac{1}{2} \pi^\frac{3}{2} \int_0^1
            \kappa^2_{\infty,i}(t) \intd t.
	  \end{split}
	\end{align}}
      Let $t\in [0,1]$, $n\in \N$, and $K \in \N$.
      By the properties of
      $S^3_{K,n,i,t}$ and $S^4_{K,n,i,t}$ , as well as the fact that
      $\partial H_-^0(e_2)$ and $\partial \Omega_n$ are sets of
      two-dimensional
      Lebesgue measure zero, we have  
	\begin{align}\label{eq:exploit_graph}
	  \begin{split}
            & \quad \int_{ H^0_-(e_2) \Delta A_{\lambda_n}
              R_{\nu_{n,i}(t)} (\Omega_n - \gamma_{n,i}(t)) }
            \left| z_2 \right| \frac{e^{-\alpha_n\sqrt{z_1^2 +
                  \frac{z_2^2}{\lambda_n^2} }}}{z_1^2 +
              \frac{z_2^2}{\lambda_n^2}} \intd^2 z \\
            & \geq \int_{g_{n,i,t}\left( S_{K,n,i,t}\right)}
            \int_0^{|h_{n,i,t}(g_{n,i,t}^{-1}(z_1))|} z_2
            \frac{e^{-\alpha_n\sqrt{z_1^2 + \frac{z_2^2}{\lambda_n^2}
                }}}{z_1^2 + \frac{z_2^2}{\lambda_n^2}} \intd z_2 \intd
            z_1.
	  \end{split}
	 \end{align}
	 
	 Due to definition \eqref{eq:defS2} and invertibility of
         $g_{n,i,t}$ on $S^3_{K,n,i,t}$, for all
         $z_1 \in g_{n,i,t}\left( S_{K,n,i,t}\right)$ we have the
         bound
         $|h_{n,i,t}(g_{n,i,t}^{-1}(z_1))| \leq
         \lambda_n^{\frac{1}{2}}$ for $n$ large enough. Therefore, for all
         $0 \leq z_2 \leq |h_{n,i,t}(g_{n,i,t}^{-1}(z_1))|$ we have by
         monotonicity
	\begin{align}\label{eq:kernel_below}
          \frac{e^{-\alpha_n\sqrt{z_1^2 + \frac{z_2^2}{\lambda_n^2}
          }}}{z_1^2 + \frac{z_2^2}{\lambda_n^2}} \geq
          \frac{e^{-\alpha_n \sqrt{z_1^2 + \frac{1}{\lambda_n}
          }}}{z_1^2 + \frac{1}{\lambda_n}}. 
	\end{align}
	Furthermore, we calculate using concavity of the square root
        and convexity of the exponential
	\begin{align}
          \label{eq:ker1}
	  \begin{split}
            0 & \leq \frac{e^{-\alpha_n |z_1|}}{z_1^2} -
            \frac{e^{-\alpha_n \sqrt{z_1^2 + \frac{1}{\lambda_n}
                }}}{z_1^2 + \frac{1}{\lambda_n}} \\
            & = \frac{e^{-\alpha_n |z_1|} }{z_1^2} \left(1 -
              e^{-\alpha_n |z_1| \left( \sqrt{1 + \frac{1}{\lambda_n
                      z_1^2}} - 1 \right)}\right)
            + \frac{ e^{-\alpha_n \sqrt{z_1^2 + \frac{1}{\lambda_n}
                }}}{\lambda_n z_1^2\left(z_1^2 +
                \frac{1}{\lambda_n}\right)}\\ 
            & \leq \frac{e^{-\alpha_n |z_1|} }{z_1^2} \left(1 -
              e^{-\frac{\alpha_n}{2\lambda_n |z_1|}}\right) + \frac{
              e^{-\frac{\alpha_n}{\sqrt{\lambda_n}}}}{\lambda_n
              z_1^2\left(z_1^2 + \frac{1}{\lambda_n}\right)}  \\
            & \leq \frac{\alpha_n e^{-\alpha_n |z_1|} }{2 \lambda_n
              |z_1|^3 }
            + \frac{ e^{-\frac{\alpha_n}{\sqrt{\lambda_n}}}}{ z_1^2} \\
            & \leq \frac{\alpha_n }{2 \lambda_n |z_1|^3 } + \frac{
              e^{-\frac{\alpha_n}{\sqrt{\lambda_n}}}}{ z_1^2}.
            \end{split}
           \end{align}
	Similarly, we have
	\begin{align}
          \label{eq:ker2}
          \frac{e^{- \frac{|z_1|}{\sqrt{2\pi}}}}{z_1^2}  -
          \frac{e^{-\alpha_n |z_1|}}{z_1^2} = \frac{e^{-
          \frac{|z_1|}{\sqrt{2\pi}}}}{z_1^2}  \left( 1 -
          e^{-\left(\alpha_n -
          \frac{1}{\sqrt{2\pi}}\right)|z_1|}\right) \leq
          \frac{\alpha_n - \frac{1}{\sqrt{2\pi}} }{|z_1|}. 
	\end{align}
	Combining \eqref{eq:ker1} and \eqref{eq:ker2}, for all
        $z_1 \in g_{n,i,t}\left( S_{K,n,i,t}\right)$ we have by
        definition \eqref{eq:defS1} that
          \begin{align}
            \frac{e^{- \frac{|z_1|}{\sqrt{2\pi}}}}{z_1^2} -
            \frac{e^{-\alpha_n \sqrt{z_1^2 + \frac{1}{\lambda_n}
            }}}{z_1^2 + \frac{1}{\lambda_n}}  \leq  a_n :=
            \frac{\alpha_n K^3  }{2 \lambda_n} 
            + K^2 e^{-\frac{\alpha_n}{\sqrt{\lambda_n}}} +K \left(
            \alpha_n - \frac{1}{\sqrt{2\pi}}\right), 
	\end{align}
	which vanishes in the limit $n \to \infty$.  Therefore,
        explicitly computing the remaining, trivial integral over
        $z_2$, we have
	\begin{align}\label{eq:int_x_2}
	  \begin{split}
            & \quad \int_{g_{n,i,t}\left( S_{K,n,i,t}\right)}
            \int_0^{|h_{n,i,t}(g_{n,i,t}^{-1}(z_1))|} z_2
            \frac{e^{-\alpha_n \sqrt{z_1^2 + \frac{z_2^2}{\lambda_n^2}
                }}}{z_1^2 + \frac{z_2^2}{\lambda_n^2}} \intd z_2 \intd
            z_1\\
            & \geq \frac{1}{2} \int_{g_{n,i,t}\left(
                S_{K,n,i,t}\right)} h^2_{n,i,t}(g_{n,i,t}^{-1}(z_1))
            \left( \frac{e^{-\frac{ |z_1|}{ \sqrt{2\pi} }}}{z_1^2} -
              a_n \right)
            \intd z_1\\
            & = \frac{1}{2} \int_{ S_{K,n,i,t}} h^2_{n,i,t}(s) \left(
              \frac{e^{-\frac{|g_{n,i,t}(s)|}{ \sqrt{2\pi}
                  }}}{g^2_{n,i,t}(s)}-  a_n  \right)
            |g_{n,i,t}'(s)|\intd s.
            \end{split}
	\end{align}

	Now, due to \eqref{eq:g_short} and estimate
        \eqref{eq:est_h}, we have
	\begin{align}\label{eq:x_2}
		\limsup_{n\to \infty} a_n \int_{0}^1 \int_{ S_{K,n,i,t}} 
		            h^2_{n,i,t}(s) |g_{n,i,t}'(s)|\intd s \intd t = 0.
	\end{align}
        Combining the bounds
        $|h_{n,i,t}(s)| \leq \lambda_n^\frac{1}{4}$ and
        $|g_{n,i,t}(s)| >K^{-1}$ for $s \in S_{K,n,i,t}$ with the
        convergence \eqref{eq:gprime}, we obtain 
	\begin{align}\label{eq:random_exponent_explained}
	  \begin{split}
            &\quad \limsup_{n \to \infty} \frac{1}{2} \int_0^1 \int_{
              S_{K,n,i,t}} h^2_{n,i,t}(s) \frac{e^{-\frac{
                  |g_{n,i,t}(s)| }{ \sqrt{2 \pi}}
              }}{g^2_{n,i,t}(s)} \left(1
              - |g_{n,i,t}'(s)| \right) \intd s \intd t\\
            & \leq \limsup_{n \to \infty} \frac{K^2}{2} \int_0^1
            \int_{ S_{K,n,i,t}} \lambda_n^\frac{1}{2} \left(1
              - g_{n,i,t}'(s) \right) \intd s \intd t\\
            & = 0.
	  \end{split}
	\end{align}
	Using $|g_{n,i,t}(s)| \leq |s|$ obtained by integrating the
        bound \eqref{eq:g_short} from $g_{n,i,t}(0) = 0$, we
        get \begin{align}\label{eq:throw_out_g} \frac{1}{2} \int_{
            S_{K,n,i,t}} h^2_{n,i,t}(s)
          \frac{e^{-\frac{|g_{n,i,t}(s)|}{ \sqrt{2\pi}
              }}}{g^2_{n,i,t}(s)} \intd s \geq \frac{1}{2} \int_{
            S_{K,n,i,t}} h^2_{n,i,t}(s) \frac{e^{-
              \frac{|s|}{\sqrt{2\pi}}} }{s^2}\intd s.
	\end{align}
	
	Let $\bar n \in \N$. For $n \in \N$ with $n \geq \bar n$, we
        have
        \begin{align}
          \tilde S_{K,\bar n, i,t}:= \bigcap_{n'= \bar n} ^\infty S_{K, n',i,t}
          \subset S_{K,n,i,t} \subset [-K,K].
        \end{align}
        Therefore,
        using Fubini's theorem we obtain
	\begin{align}
	  \begin{split}
            & \quad \liminf_{n \to \infty} \frac{1}{2} \int_0^1 \int_{
              S_{K,n,i,t}}
            h^2_{n,i,t}(s)  \frac{e^{- \frac{|s|}{\sqrt{2\pi}} }}{s^2} \intd s \intd t\\
            & \geq \liminf_{n \to \infty} \frac{1}{2} \int_0^1 \int_{
              \tilde S_{K,\bar n, i,t}}
            h^2_{n,i,t}(s)  \frac{e^{- \frac{|s|}{\sqrt{2\pi}}}}{s^2}\intd s \intd t\\
            & = \liminf_{n \to \infty} \frac{1}{2} \int_{-K}^K
            \frac{e^{- \frac{|s|}{\sqrt{2\pi}}}}{s^2} \int_0^1
            \chi_{\tilde S_{K,\bar n, i,t}} (s) h^2_{n,i,t}(s)\intd t
            \intd s.
           \end{split}
	\end{align}
	As
        $\chi_{\tilde S_{K,\bar n, i,t}} = \chi^2_{\tilde S_{K,\bar n,
            i,t}}$, we also get
	\begin{align}\label{eq:5.100}
	  \begin{split}
            & \quad \liminf_{n \to \infty} \frac{1}{2} \int_0^1 \int_{
              S_{K,n,i,t}}
            h^2_{n,i,t}(s)  \frac{e^{- \frac{|s|}{\sqrt{2\pi}} }}{s^2} \intd s \intd t\\
            & \geq \liminf_{n \to \infty} \frac{1}{2} \int_{-K}^K
            \frac{e^{- \frac{|s|}{\sqrt{2\pi}}}}{s^2} \int_0^1 \left(
              \chi_{\tilde S_{K,\bar n, i,t}} (s)
              h_{n,i,t}(s)\right)^2 \intd t \intd s.
           \end{split}
	\end{align}
	For all $s\in \R$, the convergence \eqref{eq:h_weak} then gives
	\begin{align}
          \chi_{\tilde S_{K,\bar n, i,t}} (s)
          h_{n,i,t}(s) \warr -  \chi_{\tilde S_{K,\bar n, i,t}} (s) \frac{s^2}{2}
          \kappa_{\infty,i}(t) 
         \end{align}
         in $L_t^2(0,1)$.  Fatou's Lemma and weak lower-semicontinuity
         of the $L^2$-norm imply
        	\begin{align}
	  \begin{split}
            & \quad \liminf_{n \to \infty} \frac{1}{2} \int_{-K}^K
            \frac{e^{- \frac{|s|}{\sqrt{2\pi}}}}{s^2} \int_0^1 \left(
              \chi_{\tilde S_{K,\bar n, i,t}} (s)
              h_{n,i,t}(s)\right)^2\intd t \intd s \\
            & \geq \frac{1}{2} \int_{-K}^K \frac{e^{-
                \frac{|s|}{\sqrt{2\pi}}}}{s^2} \liminf_{n \to \infty}
            \int_0^1 \left( \chi_{\tilde S_{K,\bar n, i,t}} (s)
              h_{n,i,t}(s)\right)^2\intd t \intd s\\
            & \geq \frac{1}{8} \int_{-K}^K e^{-
              \frac{|s|}{\sqrt{2\pi}}} s^2 \int_0^1 \chi_{\tilde
              S_{K,\bar n, i,t}} (s) \kappa_{\infty,i}^2(t) \intd t
            \intd s.
            \end{split}
	\end{align}
	Combining this with estimate \eqref{eq:5.100}, we get
	\begin{align}\label{eq:uniform_set}
	  \begin{split}
	  	& \quad \liminf_{n \to \infty} \frac{1}{2} \int_0^1 \int_{
              S_{K,n,i,t}}
            h^2_{n,i,t}(s)  \frac{e^{- \frac{|s|}{\sqrt{2\pi}} }}{s^2} \intd s \intd t \\
             & \geq  \frac{1}{8} \int_{-K}^K e^{-
              \frac{|s|}{\sqrt{2\pi}}} s^2 \int_0^1 \chi_{\tilde
              S_{K,\bar n, i,t}} (s) \kappa_{\infty,i}^2(t) \intd t
            \intd s.
          \end{split}
	\end{align}
	
	On the other hand, as the sets $\tilde S_{K,\bar n, i,t} $ are
        increasing in $\bar n$, we may take the supremum in $\bar n$
        on the right hand side of estimate \eqref{eq:uniform_set} and
        get after an application of Fubini's theorem and the monotone
        convergence theorem that 
	\begin{align}
	  \begin{split}
            \liminf_{n \to \infty} \frac{1}{2} \int_0^1 \int_{
              S_{K,n,i,t}} h^2_{n,i,t}(s) \frac{e^{-
                \frac{|s|}{\sqrt{2\pi}} }}{s^2} \intd s \intd t \geq
            \frac{1}{8}\int_0^1 \kappa_{\infty,i}^2(t)
            \int_{S_{K,\infty,i,t}} e^{- \frac{|s|}{\sqrt{2\pi}}} s^2
            \intd s \intd t.
            \end{split}
	\end{align}
	Equation \eqref{eq:S_total} then gives
	\begin{align}\label{eq:insert_S}
	  \begin{split}
            \liminf_{n \to \infty} \frac{1}{2} \int_0^1 \int_{
              S_{K,n,i,t}} h^2_{n,i,t}(s) \frac{e^{-
                \frac{|s|}{\sqrt{2\pi}} }}{s^2} \intd s \intd t \geq
            \frac{1}{4}\int_0^1 \kappa_{\infty,i}^2(t) \int_{K^{-1}}^K
            e^{- \frac{|s|}{\sqrt{2\pi}}} s^2 \intd s \intd t.
            \end{split}
	\end{align}
	
	Chaining together estimates \eqref{eq:exploit_graph},
        \eqref{eq:int_x_2}, \eqref{eq:x_2},
        \eqref{eq:random_exponent_explained}, \eqref{eq:throw_out_g},
        and \eqref{eq:insert_S} and noticing that the estimates
          apply for all $K \in \N$, we get 
	\begin{align}
	  \begin{split}
            & \quad \liminf_{n \to \infty} \int_0^1 \int_{ H^0_-(e_2)
              \Delta A_{\lambda_n} R_{\nu_{n,i}(t)} (\Omega_n -
              \gamma_{n,i}(t)) } \left| z_2 \right|
            \frac{e^{-\alpha_n\sqrt{z_1^2 + \frac{z_2^2}{\lambda_n^2}
                }}}{z_1^2 +
              \frac{z_2^2}{\lambda_n^2}} \intd^2 z \intd t\\
            & = \frac{1}{4} \sup_{K \in \N} \int_0^1 \kappa_{\infty,i}^2(t)
            \int_{K^{-1}}^K e^{- \frac{|s|}{\sqrt{2\pi}}} s^2 \intd s
            \intd t\\
            & = \frac{1}{4} \int_0^1 \kappa_{\infty,i}^2(t) \int_{0}^\infty e^{-
              \frac{|s|}{\sqrt{2\pi}}} s^2 \intd s
            \intd t\\
            & = 2^{\frac{1}{2}} \pi^\frac{3}{2} \int_0^1
            \kappa^2_{\infty,i}(s) \intd t,
	  \end{split}
	\end{align}
	which is what we claimed for this step of the proof.

        \textit{Step 6: Combine all limit curves.}  Note that since
        the boundary curves converge strongly in
        $H^1(\Sph^1; \R^2),$ we have
        $L(\gamma_{n,i}) \to L(\gamma_{\infty,i})$ as $n \to \infty$.
        Inserting this with $\sigma_n \to \sigma$,
        $\alpha_n \to \frac{1}{\sqrt{2\pi}}$ as $n\to \infty$, and the
        estimate \eqref{eq:lower_bound_one_curve} into the
        representation \eqref{eq:representation_revised} we get
	\begin{align}
	  \begin{split}
            \liminf_{\lambda \to \infty} \lambda_n^2
            F_{\lambda,\alpha}(\Omega_\lambda)\geq \sum_{i\in I}
            L(\gamma_{\infty,i}) \left( \sigma + \frac{ \pi}{2}
              \int_0^1 \kappa^2_{\infty,i}(t) \intd t \right) = \hat
            F_{\infty,\sigma} (\gamma),
	  \end{split}
	\end{align}
	concluding the proof.
\end{proof}

\subsection{Concluding arguments}

Finally, the proofs of Proposition \ref{prop:expansion}, Theorem
\ref{thm:main} and Corollary \ref{cor:convergence_minimizers}
essentially consist of putting together all the information at hand.
Additionally, we provide the proof for Proposition
\ref{prop:centered}.

\begin{proof}[Proof of Proposition \ref{prop:expansion}]
  The upper bound is given by Lemma \ref{lem:expansion}, while the
  lower bound follows by applying Proposition \ref{prop:lower_bound}
  to the constant sequence $n \mapsto \Omega$ for $n \in \N$.
\end{proof}

\begin{proof}[Proof of Theorem \ref{thm:main}]
  By the fact that $F_{\infty,\sigma}$ is the $L^1$-relaxation of the
  functional in \eqref{elastica_smooth}, the upper bound follows
  immediately from Proposition \ref{prop:expansion}.
	
  To prove the lower bound let $\Omega_n \in \mathcal{A}_\pi$ for
  $n \in \N$ be sets such that
  $\chi_{\Omega_n} \to \chi_{\Omega_\infty}$ for
  $\Omega_\infty \in \mathcal{A}_\pi$ and such that
	\begin{align}
		\liminf_{n\to \infty} \lambda_n^2 F_{\lambda_n,\alpha_n}(\Omega_n) <\infty.
	\end{align}
	By a standard approximation argument, we may suppose the set
        $\Omega_n$ to be regular for all $n \in \N$.  Combining
        Theorem \ref{thm:relaxation} with Propositions
        \ref{prop:compactness} and \ref{prop:lower_bound}, we get a
        system of curves
        $\left\{\gamma_{\infty,i}\right\}_{i\in I} \in
        G(\Omega_\infty)$ such that
	\begin{align}
          F_{\infty, \sigma}(\Omega_\infty) \leq \hat
          F_{\infty,\sigma}(\gamma_\infty) \leq \liminf_{n\to \infty}
          \lambda_n^2 F_{\lambda_n,\alpha_n}(\Omega_n), 
	\end{align}
	concluding the proof.
\end{proof}

\begin{proof}[Proof of Corollary \ref{cor:convergence_minimizers}]
  Existence of minimizers follows from Proposition
  \ref{lem:minimizers}.  As the minimizers are connected, after a
  suitable translation and by the bound on the perimeter there exists
  $\Omega_\infty \in \mathcal{A}_\pi$ and a subsequence (not
  relabeled) such that $\chi_{\Omega_n} \to \chi_{\Omega_\infty}$ in
  $L^1(\R^2)$.  We then have by the lower bound that
	\begin{align}
		\inf_{\mathcal{A}_\pi} F_{\infty,\sigma} \leq
          F_{\infty,\sigma}(\Omega_\infty)  \leq \liminf_{n \to
          \infty} \lambda_n^2 F_{\lambda_n, \alpha_n}(\Omega_n) =
          \liminf_{n \to \infty} \inf_{\mathcal{A}_\pi} \lambda_n^2
          F_{\lambda_n, \alpha_n}, 
	\end{align}
	while the upper bound implies
	\begin{align}
          \liminf_{n \to \infty} \inf_{\mathcal{A}_\pi} \lambda_n^2
          F_{\lambda_n, \alpha_n} \leq \inf_{\mathcal{A}_\pi}
          F_{\infty,\sigma}. 
	\end{align}
	Therefore, we have equality everywhere and $\Omega_\infty$ is
        a minimizer of $F_{\infty,\sigma}$.  The characterization of
        minimizers in the limit was carried out by Goldman, Novaga,
        and R{\"o}ger \cite{goldman2020quantitative}.
\end{proof}

\begin{proof}[Proof of Proposition \ref{prop:centered}]
  	The alternative representation of $f$ immediately follows from
	\begin{align}
          \int_{\Omega_x} \int_{\R^2}  K( |y-z|) \intd^2 y \intd^2 z =
          2\pi^2 \int_0^\infty rK(r) \intd r. 
	\end{align}
  	Setting $R := \sqrt{1 + r^2}$, we calculate
	\begin{align}
	  \begin{split}
            f(x) & = \int_{B_R(0)} \int_{B_R(0)} K( |y-z|) \intd^2 y
            \intd^2 z + \int_{B_r(x)} \int_{B_r(x)} K(
            |y-z|) \intd^2 y \intd^2 z\\
            & \qquad - 2 \int_{B_R(0)} \int_{B_r(x)} K( |y-z|)
            \intd^2 y \intd^2 z\\
            & = \int_{B_R(0)} \int_{B_R(0)} K( |y-z|) \intd^2 y
            \intd^2 z + \int_{B_r(0)} \int_{B_r(0)} K(
            |y-z|) \intd^2 y \intd^2 z\\
            & \qquad - 2 \int_{B_R(0)} \int_{B_r(x)} K( |y-z|) \intd^2
            y \intd^2 z.
	  \end{split}
	\end{align}
	By the Riesz rearrangement inequality applied to the last
        term, see \cite[Lemma 3]{lieb1977existence} for a sharp
        version of the inequality, we get $f(x) \geq f(0)$.  If $K$ is
        additionally strictly monotone decreasing, the sharp version
        implies that $x= 0$ is the only minimizer.
  \end{proof}

\appendix

\section{Model derivation}
\label{sec:model-derivation}

For the sake of the reader's convenience, below we present a first
principles derivation of the energy in \eqref{energy} along the lines
of Ref.\ \cite{andelman87}, except that we take the sharp interface
approach and model the Langmuir layer as an incompressible
two-dimensional patch of amphiphilic molecules. For a fixed patch area
there is, therefore, no non-trivial local contribution to the energy
from the interior of the patch and all the local interactions due to
van der Waals forces can be captured by an interfacial energy term
representing line tension. We also focus on a regime that takes
advantage of the large dielectric constant of water at moderate
droplet sizes (see further discussion at the end of this section). For
the clarity of the derivation, in this section we adhere to the
standard physics notations.

Consider a monolayer of amphiphilic molecules at the air-water
interface located at the $z = 0$ plane in $\R^3$, with water occupying
the $z < 0$ half-plane. The molecules are restricted to a set
$\Omega \subset \R^2$ with fixed area $|\Omega|$ in the
$xy$-plane. The excess energy associated with this monolayer patch may
be written as
\begin{align}
  \label{eq:E12}
  \mathcal E(\Omega) = \mathcal E_\mathrm{surf}(\Omega) + \mathcal
  E_\text{long-range}(\Omega), 
\end{align}
where the first term is the surface energy of the patch:
\begin{align}
  \label{eq:Esurf}
  \mathcal E_\text{surf} = \gamma P(\Omega), 
\end{align}
with $P(\Omega)$ being the perimeter of the set $\Omega$, coinciding
with the one-dimensional Hausdorff measure
$\mathcal H^1(\partial \Omega)$ of the boundary of $\Omega$ for
sufficiently regular sets \cite{ambrosio}, and $\gamma > 0$ being
the line tension. The long-range part
$\mathcal E_\text{long-range}(\Omega)$ of the energy is due to the
electrostatic interaction of the charged polar heads of the
amphiphilic molecules immersed in water:
\begin{align}
  \label{eq:Elr}
  \mathcal E_\text{long-range}(\Omega) = \frac12 \int_\Omega q \rho
  \bar U d^2 r, 
\end{align}
where $-q$ is the charge taken away from the amphiphilic molecule's
polar head by water, $\rho$ is the areal density of the amphiphilic
molecules, and $\bar U$ is the electrostatic potential at $z = 0$. The
latter may be found with the help of the Debye-H\"uckel theory by
solving for the potential $U$ in the whole space (in the SI units)
\cite{andelman87}:
\begin{align}
  \Delta U - \kappa^2 U = 0, & \qquad z < 0. \label{DUm} \\
  \Delta U = 0, & \qquad z > 0,
  \end{align}
  subject to the conditions at the air-water interface:
\begin{align}
  \lim_{z \to 0^-} U(\cdot, z) &= \lim_{z \to 0^+} U(\cdot, z),\\
  \epsilon_d \lim_{z \to 0^-} U_z(\cdot, z) - \lim_{z \to 0^+}
  U_z(\cdot, z) &= {q \over \epsilon_0} \rho \chi_\Omega, \label{Uzpm}
\end{align}
with $U$ vanishing at infinity.  Here $\kappa$ is the Debye-H\"uckel
screening parameter equal to the inverse of the screening length in
water, $\epsilon_d$ is water's dielectric constant, $\epsilon_0$ is
the vacuum permittivity, and $\chi_\Omega$ is the characteristic
function of $\Omega$.

For a given bounded set $\Omega$, this elliptic problem has a unique
solution, which can be found by means of the Fourier transform with
respect to the in-plane variables. Denoting
\begin{align}
  \widehat U_\mathbf{k}(z) := \int_{\R^2} e^{i \mathbf k \cdot \mathbf r}
  U(\mathbf r, z) \, d^2 r \qquad \mathbf k \in \R^2,
\end{align}
and passing to the Fourier space in \eqref{DUm}--\eqref{Uzpm}, after
some simple algebra we obtain \cite{andelman87}
\begin{align}
  \widehat U_\mathbf{k}(z) = {q \rho e^{z \sqrt{\kappa^2 + |\mathbf
  k|^2}}  \widehat \chi_\Omega(\mathbf k) \over
  \epsilon_0 (\epsilon_d 
  \sqrt{\kappa^2 + |\mathbf k|^2} + |\mathbf k|)}, & \qquad z < 0,
  \\
  \widehat U_\mathbf{k}(z) = {q \rho e^{- z |\mathbf k|}  \widehat
  \chi_\Omega(\mathbf k)\over 
  \epsilon_0 (\epsilon_d \sqrt{\kappa^2 + |\mathbf k|^2} + |\mathbf
  k|)}, & \qquad z > 0, 
\end{align}
where $\widehat \chi_\Omega(\mathbf k)$ is the Fourier transform of
$\chi_\Omega$.  Notice that since $\epsilon_d \simeq 80$ is very large
for water, with a very good accuracy one could neglect the
$|\mathbf k|$ term compared to
$\epsilon_d \sqrt{\kappa^2 + |\mathbf k|^2}$ in the expression for
$\widehat U_\mathbf{k}(0)$. Thus, we have
\begin{align}
  \widehat U_\mathbf{k}(0) \simeq  {q \rho \widehat
  \chi_\Omega(\mathbf k)\over 
  \epsilon_0 \epsilon_d \sqrt{\kappa^2 + |\mathbf k|^2}},
\end{align}
and returning to the real space, we get
\begin{align}
  \label{eq:Ubar}
  \bar U(\mathbf r) \simeq {q \rho \over 2 \pi \epsilon_0 \epsilon_d} 
  \int_\Omega {e^{-\kappa |\mathbf r - \mathbf r'|} \over |\mathbf r
  -  \mathbf r'|} \, d^2 r'
\end{align}
at the air-water interface. Thus, the non-local part of the energy is,
to the leading order in $\epsilon_d \gg 1$:
\begin{align}
  \mathcal E_\text{long-range} (\Omega) = {q^2 \rho^2 \over 4 \pi
  \epsilon_0 
  \epsilon_d} \int_\Omega \int_\Omega  {e^{-\kappa |\mathbf r -
  \mathbf r'|} \over |\mathbf r -  \mathbf r'|} \, d^2 r \, d^2 r'. 
\end{align}

We now carry out a non-dimensionalization, introducing
\begin{align}
  \label{eq:EOm}
  E(\Omega) := P(\Omega) + {1 \over 4 \pi} \int_\Omega \int_\Omega
  {e^{-\alpha |\mathbf r - \mathbf r'|} \over |\mathbf r - \mathbf r'|}
  \, d^2 r \, d^2 r',
\end{align}
and noting that $\mathcal E(\ell \Omega) = \gamma \ell E (\Omega)$
with the choices of the scale and the dimensionless screening
parameter, respectively:
\begin{align}
  \ell = {\sqrt{\epsilon_0 \epsilon_d \gamma} \over q \rho}, \qquad
  \alpha = {\kappa \sqrt{\epsilon_0 \epsilon_d \gamma} \over q \rho}. 
\end{align}
Taking into account that
$\int_{\R^2} r^{-1} e^{-\alpha r} d^2 r =2 \pi / \alpha$, we can then
rewrite the energy $E(\Omega)$ as
\begin{align}
  E(\Omega) = E_\alpha(\Omega) + {|\Omega| \over 2 \alpha}, 
\end{align}
and so, up to an additive constant the energy $E(\Omega)$ coincides
with that in \eqref{energy}.

We note that the kernel appearing in \eqref{eq:Ubar} exhibits
exponential decay due to the fact that we neglected the $|\mathbf k|$
term in the Fourier transform of $\bar U$ for large
$\epsilon_d$. This, however, becomes invalid for arbitrarily large
separations, for which the kernel can be shown to exhibit an algebraic
decay of the form
$q^2 \rho^2 /(2 \pi \epsilon_0 \epsilon_d^2 \kappa^2 |\mathbf r -
\mathbf r'|^3)$, up to an additive constant. Therefore, in agreement
with the conventional wisdom the limit of large droplets should be
described by the model in which the long-range part of the energy is
of dipolar type \cite{mcconnell88,andelman85}. This model corresponds
to the case of strong ionic solutions and was first studied rigorously
in Ref.\ \cite{muratov2018nonlocal}. Nevertheless, for
$\epsilon_d \gg 1$ the model in \eqref{eq:EOm} is appropriate in a
certain range of droplet sizes, which corresponds to the case of weak
ionic solutions \cite{andelman87}.

\printbibliography[heading=bibintoc]
\end{document}